\def\ps@pprintTitle{%
 \let\@oddhead\@empty
 \let\@evenhead\@empty
 \def\@oddfoot{}%
 \let\@evenfoot\@oddfoot}
\newcommand\numberthis{\addtocounter{equation}{1}\tag{\theequation}}
\newtheorem{thm}{Theorem}[section]
\newtheorem{prop}[thm]{Proposition}
\newtheorem{rem}[thm]{Remark}
\newtheorem{conjecture}[thm]{Conjecture}
\newtheorem{cor}[thm]{Corollary}
\newcommand{\BigO}[1]{\ensuremath{\operatorname{O}\left(#1\right)}}
\newcommand{\BigOk}[1]{\ensuremath{\operatorname{O}_k\left(#1\right)}}
\renewcommand{\geq}{\geqslant}
\renewcommand{\leq}{\leqslant}
\newcommand{\BigOkq}[1]{\ensuremath{\operatorname{O}_{q, k}\left(#1\right)}}
\newcommand{\BigOqkd}[1]{\ensuremath{\operatorname{O}_{\mathfrak{q},k,d}\left(#1\right)}}
\newcommand{\BigOqk}[1]{\ensuremath{\operatorname{O}_{\mathfrak{q},k}\left(#1\right)}}
\newcommand{\BigOmk}[1]{\ensuremath{\operatorname{O}_{\mathfrak{m},k}\left(#1\right)}}
\begin{document}

\begin{frontmatter}

\title{Chebyshev's bias for irrational factor function }

\author[mymainaddress]{Bittu Chahal}
\ead{bittui@iiitd.ac.in}


\address[mymainaddress]{Department of Mathematics, IIIT Delhi, New Delhi 110020}

\begin{abstract}
In this article, we study the distribution of the irrational factor function of order $k$, introduced first by Atanassov for $k=2$ and later it was generalized by Dong et al. for all $k\geq 2$. We introduce the irrational factor function in both number field and function field settings, derive asymptotic formulas for their average value, and further establish omega results for the error term in the asymptotic formulas. Moreover, we study the Chebyshev's bias phenomenon for number field and function field analogues of sum of the irrational factor function. 


\end{abstract}

\begin{keyword}
Chebyshev's bias, irrational factor function, races, number fields, function fields.
 \MSC[2020] 11N37\sep 11R42 \sep 11R59.
\end{keyword}

\end{frontmatter}

\section{Introduction and main results}
In 1837, Dirichlet \cite{Dirichlet} proved that for any integers $a$ and $q$ with $(a,q)=1$, there are infinitely many primes $p\equiv a\pmod{q}$, and are evenly distributed along arithmetic progressions. In a letter to M. Fuss in 1853, Chebyshev \cite{Chebyshev} remarked that the prime quadratic non-residues of a given modulus exceeds that of the prime quadratic residues. This is called the Chebyshev's bias phenomenon. Since Chebyshev's note, several papers have been written on this topic in an attempt to understand in what sense his assertion is true. Littlewood \cite{Littlewood} in 1914 proved that the quantities $\pi(x; 3,4)-\pi(x;1,4)$ and $\pi(x; 2,3)-\pi(x;1,3)$ change sign infinitely often. Furthermore, the study of prime number races, which involves examining inequalities among the counting functions of prime numbers in arithmetic progression, was developed by Shanks \cite{Shanks}, Knapowski and Tur\'{a}n in a series of papers \cite{Knapowski, Knapowski2, Knapowski3, Knapowski4, Knapowski5, Knapowski6, Knapowski7, Knapowski8}, as well as by Kaczorowski \cite{Kaczorowski1, Kaczorowski2, Kaczorowski3}. Rubinstein and Sarnak \cite{Sarnak} made one of the most significant contributions by proving that the set of positive real numbers $x$ for which the inequalities $\pi(x;q,a_1)>\cdots>\pi(x;q,a_r)$ hold has a positive density, under certain hypotheses on the zeros of the Dirichlet $L$-functions modulo $q$. For more details and comprehensive surveys, one is referred to the survey articles of Granville and Martin \cite{Granville} and of Martin et al. \cite{Martin}.

 Littlewood's result can be interpreted in other words, i.e., the set of values of $x$ for which the difference 
 \[\pi(x; 3,4)-\pi(x;1,4) \numberthis\label{diff}\]
 is positive and the set of values of $x$ for which the difference in \eqref{diff} is negative are unbounded. The difference in \eqref{diff} can be written as
 \[\sum_{\substack{n\leq x\\n\equiv 3\pmod{4}}}\frac{\Lambda(n)\mu(n)^2}{\log n}-\sum_{\substack{n\leq x\\n\equiv 1\pmod{4}}}\frac{\Lambda(n)\mu(n)^2}{\log n}, \numberthis\label{lambda}\]
where $\Lambda(n)$ and $\mu(n)$ are the von Mangoldt and M\"{o}bius functions, respectively. In this paper, we study the analogue of Littlewood's result in the form of \eqref{lambda} for arithmetic function $I_k(n)$, the irrational factor function of order $k$.

\subsection{Irrational factor function over $\mathbb{Q}$}
Let $n$ be a positive integer, and let $n=\prod_{i=1}^lp_i^{\alpha_i}$ be its prime factorization. Atanassov \cite{Atanassov} introduced the irrational factor function of $n$ as follows:
\[ I(n)=\prod_{i=1}^lp^{1/\alpha_i}.\]
Let $S(n)$ denote the square-free part of $n$. Alkan et al. \cite{Alkan} observed that $I(n)\geq S(n)^{1/(k-1)}\geq n^{1/(k-1)^2}$ and $I(n)\leq S(n)^{1/k}$ when $n$ is $k$-power free and $k$-power full, respectively. Consequently, $I(n)$ measures how far $n$ is from being $k$-power free or $k$-power full. In particular, small values of $I(n)$ are associated to $k$-power full integers, while large values of $I(n)$ correspond to $k$-power free integers. Nevertheless, $I(n)$ is a rough approximation, since $k$ does not appear in its definition.
Motivated by the observation of Alkan et al. \cite{Alkan}, Dong et al. \cite{Meng} defined the irrational factor function of order $k$ by introducing the parameter $k$ into the function $I(n)$. For any integer $k\geq 2$, it is defined as:
\[I_k(n)=\prod_{i=1}^lp_i^{\beta_i}, \] 
where \[
\beta_i=\left\{\begin{array}{cc}
   \alpha_i,  & \mbox{if} \ \alpha_i<k,\\
  \frac{1}{\alpha_i},   & \mbox{if}\ \alpha_i\geq k.
\end{array}\right. \numberthis\label{beta} \]
It is clear that $I_2(n)=I(n)$. It was noticed by Dong et al. \cite{Meng} that $\frac{I_k(n)}{n}$ closely approximates $\mu_k(n)$, where $\mu_k(n)$ denotes the characteristic function of $k$-free integers. In particular,
\[\left|\frac{I_k(n)}{n}-\mu_k(n)\right|\leq \begin{cases}
   0,  & \mbox{if} \ n\ \mbox{is}\ k\mbox{-free},\\
  \frac{1}{n^{1-\frac{1}{k^2}}},   & \mbox{if}\ n\ \mbox{is}\ k\mbox{-power full}.
\end{cases} \]
Moreover, they also established the asymptotic formula for the average of $I_k(n)$ and proved that the average of $\frac{I_k(n)}{n}$ is $\frac{c_k}{\zeta(k)}$, where $c_k$ is a constant. It is well known that (see \cite{Walfisz})
\[\sum_{n\leq x}\mu_k(n)=\frac{x}{\zeta(k)}+\BigO{x^{\frac{1}{k}}\exp{(-ck^{-\frac{8}{5}}(\log x)^{\frac{3}{5}}(\log\log x)^{-\frac{1}{5}})}}. \]
 Therefore, on average, $\frac{I_k(n)}{n}$ and $\mu_k(n)$ are very close for large $k$. Motivated by this, we investigate the distribution of the irrational factor function $I_k(n)$. 
Let $a$ and $q$ be positive integers. 
 We denote
\[S_k(Q;q,a):=\sum_{\substack{n\leq Q\\ n\equiv a\pmod{q}}}I_k(n) . \numberthis\label{S_k}\]
 Alkan et al. \cite{Alkan} and Dong et al. \cite{Meng} studied the asymptotic behavior of $S_2(Q;1,a)$ and $S_k(Q;1,a)$, respectively. The main objective of this article is to introduce the irrational factor function over number fields and function fields, study asymptotic behavior for the average of its analogs over arithmetic progressions, and investigate the Chebyshev's bias phenomenon. By setting the number field to the field of rationals, $\mathbb{Q}$, we obtain the results for the distribution of the irrational factor function over $\mathbb{Q}$ as a special case.

\subsection{Irrational factor function over number fields}
Let $\mathbb{K}$ be a number field of degree $m$ and $\mathcal{O}_{\mathbb{K}}$ be its ring of integers. Let $\mathfrak{I}\subset\mathcal{O}_{\mathbb{K}}$ be a non-zero ideal, and let $\mathfrak{I}=\prod_{i=1}^{l}\mathfrak{p}_i^{\alpha_i}$ be its factorization into non-zero prime ideals $\mathfrak{p}_i$. 
We define the irrational factor function for $\mathfrak{I}\subset\mathcal{O}_{\mathbb{K}}$ as follows:
\[\mathcal{F}_{k}(\mathfrak{I})=\prod_{i=1}^{l}(\mathcal{N}\mathfrak{p}_i)^{\beta_i}, \]
where $\beta_i,\ 1\leq i\leq l$ are as in \eqref{beta} and $\mathcal{N}\mathfrak{p}$ denotes the norm of an ideal $\mathfrak{p}$. In order to study the behavior of $\mathcal{F}_{k}(\mathfrak{I})$, we first establish the asymptotic formula for the average of the $\mathcal{F}_{k}(\mathfrak{I})$. Denote
\[\mathcal{S}_{k,\mathbb{K}}(x):=\sum_{\substack{\mathfrak{I}\subset\mathcal{O}_{\mathbb{K}}\\ \mathcal{N}(\mathfrak{I})\leq x}}\mathcal{F}_{k}(\mathfrak{I}). \]

 To state our results, let us fix some notation. Let $\zeta_{\mathbb{K}}(s)$ be the Dedekind zeta function associated with the number field $\mathbb{K}$, and let $\mathcal{L}(s,\chi)$ denote the Hecke $L$-function associated with Hecke character $\chi\pmod{\mathfrak{q}}$ of $\mathcal{O}_{\mathbb{K}}$. The key idea to obtain the asymptotic formula for $\mathcal{S}_{k,\mathbb{K}}(x)$ is the Wiener-Ikehara Tauberian theorem. Our first result is as follows:
\begin{thm}\label{thm6 Tauberian}
 For $k\geq 2$, we have
  \[\mathcal{S}_{k,\mathbb{K}}(x)=\frac{x^2\lambda_{\mathbb{K}}{R}_{k}(2)}{2\zeta_{\mathbb{K}}(k)}+\BigOk{\frac{x^2}{\log x}}, \] 
  where $\lambda_{\mathbb{K}}$ is the residue of the Dedekind zeta function at $s=1$ and ${R}_{k}(2)$ is a constant depending on $k$.
\end{thm}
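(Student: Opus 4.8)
The plan is to realise $\mathcal{S}_{k,\mathbb{K}}(x)$ as the partial‑sum function of a Dirichlet series whose singular behaviour is governed by $\zeta_{\mathbb{K}}(s-1)$, and then to feed this into a (quantitative) Wiener--Ikehara Tauberian theorem. Since $\mathcal{F}_k$ is multiplicative on ideals, the series $D(s)=\sum_{\mathfrak{I}\subset\mathcal{O}_{\mathbb{K}}}\mathcal{F}_k(\mathfrak{I})\,\mathcal{N}(\mathfrak{I})^{-s}$ admits an Euler product, and writing $u=\mathcal{N}\mathfrak{p}$ its $\mathfrak{p}$‑factor equals $\sum_{\alpha=0}^{k-1}u^{\alpha(1-s)}+\sum_{\alpha\geq k}u^{1/\alpha-\alpha s}=\frac{1-u^{k(1-s)}}{1-u^{1-s}}+\sum_{\alpha\geq k}u^{1/\alpha-\alpha s}$. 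Pulling the first term out over all $\mathfrak{p}$ recognises the local factors of $\zeta_{\mathbb{K}}(s-1)/\zeta_{\mathbb{K}}(k(s-1))$, so that
\[
D(s)=\frac{\zeta_{\mathbb{K}}(s-1)}{\zeta_{\mathbb{K}}(k(s-1))}\,R_k(s),\qquad R_k(s):=\prod_{\mathfrak{p}}\left(1+\frac{1-u^{1-s}}{1-u^{k(1-s)}}\sum_{\alpha\geq k}u^{1/\alpha-\alpha s}\right).
\]
Since $\mathcal{F}_k(\mathfrak{I})\leq\mathcal{N}(\mathfrak{I})$, the series $D$ converges absolutely for $\Re s>2$ and has nonnegative coefficients, so by Landau's theorem its abscissa of convergence is exactly $2$.

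Next I would check that $R_k$ is the harmless factor. Bounding the inner sum by its leading $\alpha=k$ term gives $\bigl|\sum_{\alpha\geq k}u^{1/\alpha-\alpha s}\bigr|\ll u^{1/k-k\Re s}$, and $\sum_{\mathfrak{p}}(\mathcal{N}\mathfrak{p})^{1/k-k\sigma}$ converges as soon as $\sigma>\tfrac{k+1}{k^{2}}$; since $\tfrac{k+1}{k^{2}}\leq\tfrac34<2$ for every $k\geq2$, the product $R_k(s)$ converges absolutely — hence is holomorphic and zero‑free — on an open half‑plane containing $\{\Re s\geq2\}$, and in particular $R_k(2)$ is a well‑defined nonzero constant depending only on $k$. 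For the continuation onto the line I would invoke the standard analytic facts for the Dedekind zeta function: $\zeta_{\mathbb{K}}$ is holomorphic and non‑vanishing on $\Re s=1$ apart from a simple pole at $s=1$ with residue $\lambda_{\mathbb{K}}$, it has a classical (Landau) zero‑free region $\Re s>1-c/\log(|{\Im s}|+2)$, and it is of polynomial growth on vertical strips; moreover $\zeta_{\mathbb{K}}(k(s-1))$ stays in the region of absolute convergence (so is bounded away from $0$) when $s$ lies near $\{\Re s\geq2\}$. It follows that $D(s)-\rho/(s-2)$, with $\rho=\lambda_{\mathbb{K}}R_k(2)/\zeta_{\mathbb{K}}(k)$, continues holomorphically onto $\Re s=2$ and slightly into the zero‑free region, with polynomial growth, the only singularity of $D$ on $\{\Re s\geq2\}$ being the simple pole at $s=2$ of residue $\rho$.

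Finally, after the shift $s\mapsto s+1$ the series $D(s+1)$ has nonnegative coefficients, abscissa $1$, and $D(s+1)-\rho/(s-1)$ continues past $\Re s=1$ as above; an effective form of the Wiener--Ikehara theorem — valid once the continuation to the line is, say, $C^{1}$, which is automatic here since it is in fact holomorphic slightly beyond the line — then yields $\sum_{\mathcal{N}\mathfrak{I}\leq x}\mathcal{F}_k(\mathfrak{I})=\tfrac{\rho}{2}x^{2}+\BigOk{x^{2}/\log x}$, the factor $\tfrac12$ being the usual $1/\sigma_0$ with $\sigma_0=2$; this is exactly the claimed formula $\frac{x^{2}\lambda_{\mathbb{K}}R_k(2)}{2\zeta_{\mathbb{K}}(k)}+\BigOk{\frac{x^{2}}{\log x}}$. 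Equivalently one may run a truncated Perron formula with the zero‑free region, which in fact gives the sharper error $x^{2}\exp(-c\sqrt{\log x})$.

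The main obstacle I anticipate is twofold, and both parts are ``soft'' rather than deep. First, one must pin down the precise half‑plane of holomorphy and non‑vanishing of the correction product $R_k(s)$ — and likewise the innocuousness of $1/\zeta_{\mathbb{K}}(k(s-1))$ — so that neither obstructs the continuation to $\Re s=2$; this is the computation that forces the hypothesis $k\geq2$. Second, and more substantively, one must upgrade the bare Tauberian conclusion $o(x^{2})$ to the quantitative $\BigOk{x^{2}/\log x}$, which requires importing a genuine zero‑free region for $\zeta_{\mathbb{K}}$ at the edge $\Re s=1$ (equivalently, using an effective Tauberian theorem) rather than mere continuity on the line.
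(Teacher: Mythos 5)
Your proposal is correct and essentially mirrors the paper's argument: realize $\mathcal{S}_{k,\mathbb{K}}(x)$ as the summatory function of the nonnegative multiplicative coefficients $a(n)=\sum_{\mathcal{N}\mathfrak{I}=n}\mathcal{F}_k(\mathfrak{I})$, factor the Dirichlet series via the Euler product as $\zeta_{\mathbb{K}}(s-1)/\zeta_{\mathbb{K}}(ks-k)\cdot R_k(s)$, verify that $R_k(s)$ and $1/\zeta_{\mathbb{K}}(ks-k)$ are holomorphic (and the latter zero--free for $\Re s>1+1/k$) in a neighbourhood of $\Re s=2$, and apply a quantitative Wiener--Ikehara theorem to the resulting simple pole at $s=2$ with residue $\lambda_{\mathbb{K}}R_k(2)/\zeta_{\mathbb{K}}(k)$, yielding the main term $\tfrac{1}{2}\lambda_{\mathbb{K}}R_k(2)x^{2}/\zeta_{\mathbb{K}}(k)$ with error $O_k(x^{2}/\log x)$. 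Your closing remark that a truncated Perron integration with a zero--free region for $\zeta_{\mathbb{K}}$ would sharpen the error to $x^{2}\exp(-c\sqrt{\log x})$ is accurate but goes beyond what the stated theorem asks.
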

For $\mathbb{K}=\mathbb{Q}$, we immediately obtain the result of Dong et al. \cite[Theorem 2]{Meng}. We next study the asymptotic behavior of the irrational factor function $\mathcal{F}_{k}(\mathfrak{I})$ over arithmetic progressions.
Let $\mathfrak{a}, \mathfrak{q}\subset\mathcal{O}_{\mathbb{K}}$ be fixed integral ideals. Denote
\[\mathcal{S}_{k,\mathbb{K}}(x;\mathfrak{q},\mathfrak{a}):=\sum_{\substack{\mathfrak{I}\subset\mathcal{O}_{\mathbb{K}}\\ \mathcal{N}(\mathfrak{I})\leq x\\ \mathfrak{I}\equiv \mathfrak{a}\pmod{\mathfrak{q}}}}\mathcal{F}_{k}(\mathfrak{I}). \numberthis\label{Ns1}\]
In order to derive our next result, we first state the Generalized Lindel\"{o}f hypothesis (GLH).
\begin{conjecture}(Generalized Lindel\"{o}f hypothesis).
 Let $\mathfrak{q}\subset\mathcal{O}_{\mathbb{K}}$ be a fixed integral ideal and let $\mathcal{L}(s,\chi)$ be the Hecke $L$-function modulo $\mathfrak{q}$. Then for any $\epsilon>0$, we have
   \[\mathcal{L}\left(\frac{1}{2}+it,\chi\right)\ll_{\mathfrak{q}}|t|^{\epsilon}. \]
\end{conjecture}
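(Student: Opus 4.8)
\emph{Remark on the status of this conjecture.} The Generalized Lindel\"{o}f hypothesis for Hecke $L$-functions is a deep open problem, so rather than an unconditional proof I outline the analytic framework in which such a bound would be established and record the partial progress that is actually available; in this paper GLH is used only as a working hypothesis, exactly as the Haselgrove condition is used in the number-field results.

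The plan would be to start from the approximate functional equation for $\mathcal{L}(s,\chi)$ on the critical line $s=\tfrac{1}{2}+it$, which expresses $\mathcal{L}(\tfrac{1}{2}+it,\chi)$ as a sum of $\asymp\sqrt{C(\chi,t)}$ terms, where $C(\chi,t)\asymp_{\mathfrak{q}}(1+|t|)^{m}$ (with $m=[\mathbb{K}:\mathbb{Q}]$) is the analytic conductor. Estimating this sum term by term yields the convexity bound
\[\mathcal{L}(\tfrac{1}{2}+it,\chi)\ll_{\mathfrak{q}}(1+|t|)^{m/4+\epsilon},\]
and the entire content of the conjecture is to remove the exponent $m/4$. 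The unconditional route is the subconvexity problem: one opens the character sum, applies Poisson or Voronoi summation to pass to the dual sum, and then extracts cancellation using the large sieve, the circle ($\delta$-)method, or Kuznetsov-type spectral formulae over $\mathbb{K}$, producing a bound with exponent $m/4-\delta$ for a small explicit $\delta>0$ (for $\mathbb{K}=\mathbb{Q}$ the classical Weyl exponent $1/6$ and its refinements). Amplification or iteration of such arguments has never reached the full saving $\delta=m/4$.

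Alternatively, and far more cheaply, GLH follows from the Generalized Riemann Hypothesis for $\mathcal{L}(s,\chi)$: assuming GRH one represents $\log\mathcal{L}(\tfrac{1}{2}+it,\chi)$ as a contour integral, shifts the contour just to the right of $\Re(s)=\tfrac{1}{2}$, and bounds $\mathcal{L}'/\mathcal{L}$ in terms of the nearby zeros, obtaining $\log|\mathcal{L}(\tfrac{1}{2}+it,\chi)|\ll(\log|t|)/\log\log|t|$, which is far stronger than the claimed $|t|^{\epsilon}$. Thus GLH lies strictly between unconditional subconvexity and GRH. The main obstacle is precisely that neither GRH for Hecke $L$-functions nor an unconditional subconvexity estimate attaining the full savings $m/4$ is known in general --- closing the gap between the convexity exponent and $0$ is exactly the unsolved part --- so every statement in this paper that depends on GLH is to be understood as conditional.
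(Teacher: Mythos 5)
Your treatment is correct and matches the paper: the statement is the Generalized Lindel\"{o}f hypothesis, which the paper records as a conjecture without proof and invokes only as a working hypothesis (in Theorem \ref{thm4} for $m\geq 4$), exactly as you observe. Your remarks on the convexity exponent and on GRH implying GLH are accurate and consistent with the unconditional bounds (Rademacher, Fogels) that the paper uses in the $m=2,3$ cases, so nothing further is required.
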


Following this, we present our next result.
\begin{thm}\label{thm4}
  Let $\mathbb{K}$ be a number field of degree $m$. Let $\mathfrak{a}, \mathfrak{q}\subset\mathcal{O}_{\mathbb{K}}$ be fixed non-zero integral ideals with $(\mathfrak{a}, \mathfrak{q})=1$. Then, for $k\geq 2$, we have
  \[\mathcal{S}_{k,\mathbb{K}}(x;\mathfrak{q},\mathfrak{a})=\frac{x^2\lambda_{\mathbb{K}}{R}_{k}(2,\chi_0)}{2|(\mathcal{O}_\mathbb{K}/\mathfrak{q})^*|\mathcal{L}(k,\chi_0)}\prod_{\mathfrak{p}|\mathfrak{q}}\left(1-\frac{1}{\mathcal{N}(\mathfrak{p})}\right)+\mathcal{E}(x), \]
  where
  \[
\mathcal{E}(x)=\begin{cases}
 \BigOkq{x^{\frac{2(2k-1)}{3k-2}}\exp{\left(-c\left(\frac{\log x}{\log\log x}\right)^{\frac{1}{3}}\right) }},  &\mbox{unconditionally and}\ m=1,\\
   \BigOqkd{x^{1+\frac{1}{k}+\frac{m(k-1)^2}{2k^2}+\epsilon}},  & \mbox{unconditionally and}\ m=2,3  , \\
  \BigOqk{x^{\frac{k+1}{k}+\epsilon}},   & \mbox{on the GLH and}\ m\geq 4,
\end{cases} \]
 $\lambda_{\mathbb{K}}$ is the residue of the Dedekind zeta function at $s=1$ and ${R}_{k}(2,\chi_0)$ is a constant depending on $k$ and $\mathfrak{q}$.
\end{thm}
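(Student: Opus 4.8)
The plan is to detect the congruence $\mathfrak{I}\equiv\mathfrak{a}\pmod{\mathfrak{q}}$ by orthogonality of Hecke characters, recognize the Dirichlet series of $\mathcal{F}_k\cdot\chi$ as a quotient of Hecke $L$-functions times a harmless Euler product, and then run Perron's formula with a contour shift whose cost is dictated by the degree $m$. For the first step, since $(\mathfrak{a},\mathfrak{q})=1$ only ideals $\mathfrak{I}$ coprime to $\mathfrak{q}$ contribute, so orthogonality of the characters of $(\mathcal{O}_\mathbb{K}/\mathfrak{q})^*$ gives
\[
\mathcal{S}_{k,\mathbb{K}}(x;\mathfrak{q},\mathfrak{a})=\frac{1}{|(\mathcal{O}_\mathbb{K}/\mathfrak{q})^*|}\sum_{\chi\bmod\mathfrak{q}}\overline{\chi}(\mathfrak{a})\,\Psi(x,\chi),\qquad \Psi(x,\chi):=\sum_{\substack{\mathcal{N}(\mathfrak{I})\leq x\\(\mathfrak{I},\mathfrak{q})=1}}\mathcal{F}_k(\mathfrak{I})\chi(\mathfrak{I}),
\]
so it suffices to estimate each $\Psi(x,\chi)$, the principal character $\chi_0$ supplying the main term and the others only an error.

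For the second step, using that $\mathcal{F}_k$ is multiplicative, summing the finite geometric sum $\sum_{0\leq\alpha<k}(\chi(\mathfrak{p})\mathcal{N}(\mathfrak{p})^{1-s})^\alpha$ inside each Euler factor, and dividing out the local factors of $\mathcal{L}(s-1,\chi)$ and of $\mathcal{L}(ks-k,\chi^k)^{-1}$, one gets for $\Re(s)>2$
\[
D_k(s,\chi):=\sum_{(\mathfrak{I},\mathfrak{q})=1}\frac{\mathcal{F}_k(\mathfrak{I})\chi(\mathfrak{I})}{\mathcal{N}(\mathfrak{I})^s}=\frac{\mathcal{L}(s-1,\chi)}{\mathcal{L}(ks-k,\chi^k)}\,H_k(s,\chi),
\]
where $H_k(s,\chi)$ is the explicit Euler product over $\mathfrak{p}\nmid\mathfrak{q}$ collecting the tails $\alpha\geq k$; a direct estimate of its Euler factors shows $H_k(s,\chi)$ converges absolutely, hence is holomorphic and bounded, for $\Re(s)>1$, so in particular it is regular at $s=2$ and on every line $\Re(s)=1+\tfrac{1}{k}+\epsilon$. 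Thus $D_k(s,\chi)$ continues to $\Re(s)>1+\tfrac{1}{k}$ with a simple pole only when $\chi=\chi_0$, at $s=2$, inherited from $\mathcal{L}(s-1,\chi_0)$. Using $\operatorname{Res}_{s=1}\mathcal{L}(s,\chi_0)=\lambda_\mathbb{K}\prod_{\mathfrak{p}\mid\mathfrak{q}}(1-\mathcal{N}(\mathfrak{p})^{-1})$ and $\mathcal{L}(2k-k,\chi_0^k)=\mathcal{L}(k,\chi_0)$, the residue of $D_k(s,\chi_0)x^s/s$ at $s=2$ equals $\frac{x^2\lambda_\mathbb{K}H_k(2,\chi_0)}{2\mathcal{L}(k,\chi_0)}\prod_{\mathfrak{p}\mid\mathfrak{q}}(1-\mathcal{N}(\mathfrak{p})^{-1})$, and since only $\chi_0$ contributes a residue and $\overline{\chi_0}(\mathfrak{a})=1$, this is exactly the claimed main term with $R_k(2,\chi_0):=H_k(2,\chi_0)$.

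For the third step, apply a truncated Perron formula to $\Psi(x,\chi)$ on a line just right of $\Re(s)=2$ up to height $T$; since $\sum_{\mathcal{N}(\mathfrak{I})=n}|\mathcal{F}_k(\mathfrak{I})|\ll n^{1+\epsilon}$, the truncation error is $\ll x^{2+\epsilon}/T$. Now push the contour left, crossing the pole at $s=2$ (only for $\chi_0$); the admissible width of the shift and the size of the integrand on the new contour are what split the three cases. For $m=1$ the $L$-functions involved are Dirichlet $L$-functions, and one uses the classical (Korobov--Vinogradov type) zero-free region both for $\mathcal{L}(s-1,\chi)$ and for the denominator $\mathcal{L}(ks-k,\chi^k)$ to shift slightly past $\Re(s)=1+\tfrac{1}{k}$, then balances the convexity/mean-value estimates for $\mathcal{L}(s-1,\chi)$ against the truncation to reach $x^{2(2k-1)/(3k-2)}\exp(-c(\log x/\log\log x)^{1/3})$; this is essentially the argument of Dong et al.\ \cite{Meng}, modified to carry the character. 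For $m=2,3$ no strong enough zero-free region for Hecke $L$-functions of a number field of degree at least $2$ is available unconditionally, so one shifts only to $\Re(s)=1+\tfrac{1}{k}+\epsilon$, where $\mathcal{L}(ks-k,\chi^k)^{-1}$ is innocuous (it lies to the right of the critical strip of $\mathcal{L}(\cdot,\chi^k)$), $H_k$ is bounded, and the convexity bound for Hecke $L$-functions gives $\mathcal{L}(1/k+\epsilon+it,\chi)\ll_{\mathfrak{q},d,\epsilon}(1+|t|)^{m(k-1)/(2k)+\epsilon}$; the shifted integral is then $\ll x^{1+1/k}T^{m(k-1)/(2k)+\epsilon}$, and the choice $T=x^{(k-1)(2k-m(k-1))/(2k^2)}$ makes both this and $x^{2+\epsilon}/T$ at most $x^{1+1/k+m(k-1)^2/(2k^2)+\epsilon}$ (nontrivial precisely when $m(k-1)<2k$). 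For $m\geq4$, GLH gives $\mathcal{L}(\tfrac{1}{2}+it,\chi)\ll_\mathfrak{q}|t|^\epsilon$, which propagates by Phragm\'en--Lindel\"of to $\mathcal{L}(\sigma+it,\chi)\ll_\mathfrak{q}|t|^\epsilon$ throughout $\tfrac{1}{2}\leq\sigma\leq1$, so the contour at $\Re(s)=1+\tfrac{1}{k}+\epsilon$ costs only $\ll x^{1+1/k}T^\epsilon$; balancing against the truncation yields $x^{(k+1)/k+\epsilon}$. Summing over the finitely many $\chi$ completes the proof.

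The real obstacle is the analytic input on Hecke $L$-functions in the critical strip — the width of the zero-free region when $m=1$, the convexity exponent when $m=2,3$, and having to assume GLH when $m\geq4$ — together with the more technical point that the factor $\mathcal{L}(ks-k,\chi^k)^{-1}$ caps the contour shift at $\Re(s)=1+\tfrac{1}{k}$, which forces one to verify that the correction Euler product $H_k(s,\chi)$ really does converge absolutely up to $\Re(s)=1$ so that it stays regular and bounded on the whole contour; handling the horizontal segments and tracking the dependence of the implied constants on $\mathfrak{q}$ and the discriminant $d$ is then routine.
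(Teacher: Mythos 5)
Your proof follows essentially the same route as the paper: orthogonality of Hecke characters to isolate the congruence, the Euler-product factorization $D_k(s,\chi)=\mathcal{L}(s-1,\chi)\mathcal{L}(ks-k,\chi^k)^{-1}H_k(s,\chi)$ with $H_k$ absolutely convergent for $\Re(s)>1$, truncated Perron, and a contour shift to $\Re(s)=1+\tfrac{1}{k}+o(1)$ whose cost is controlled, exactly as in the paper, by the Vinogradov--Korobov zero-free region when $m=1$, Rademacher/Fogels-type convexity estimates when $m=2,3$, and GLH with Phragm\'en--Lindel\"of when $m\geq 4$. Your choice of $T$ for $m=2,3$ is the balance point rather than the paper's $T=x^{(k-1)/k}$, but it yields the same exponent, and your parenthetical that the unconditional bound is nontrivial precisely when $m(k-1)<2k$ is a correct observation.
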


For $m\geq 4$, the above theorem provides the asymptotic formula for $\mathcal{S}_{k,\mathbb{K}}(x;\mathfrak{q},\mathfrak{a})$, assuming the Generalized Lindel\"{o}f hypothesis. In order to obtain unconditional result, we invoke the Wiener-Ikehara Tauberian theorem and derive the bound for $\mathcal{S}_{k,\mathbb{K}}(x;\mathfrak{q},\mathfrak{a})$. We then have the following result.

\begin{thm}\label{thm6 Tauberian2}
 Let $\mathfrak{a}, \mathfrak{q}\subset\mathcal{O}_{\mathbb{K}}$ be fixed non-zero integral ideals with $(\mathfrak{a}, \mathfrak{q})=1$. For $k\geq 2$, we have
   \[\mathcal{S}_{k,\mathbb{K}}(x;\mathfrak{q},\mathfrak{a})\ll_{\mathfrak{q}} \frac{x^2\lambda_{\mathbb{K}}{R}_{k}(2)}{\zeta_{\mathbb{K}}(k)}, \]
    where $\lambda_{\mathbb{K}}$ is the residue of the Dedekind zeta function at $s=1$ and ${R}_{k}(2)$ is a constant depending on $k$.
\end{thm}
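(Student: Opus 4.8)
The quickest route is to exploit positivity. Since $(\mathfrak{a},\mathfrak{q})=1$ forces $(\mathfrak{I},\mathfrak{q})=1$ for every ideal $\mathfrak{I}\equiv\mathfrak{a}\pmod{\mathfrak{q}}$, and since each value $\mathcal{F}_{k}(\mathfrak{I})=\prod_{i=1}^{l}(\mathcal{N}\mathfrak{p}_i)^{\beta_i}$ is a product of positive real numbers, we have
\[
\mathcal{S}_{k,\mathbb{K}}(x;\mathfrak{q},\mathfrak{a})\ \leq\ T_{\mathfrak{q}}(x):=\sum_{\substack{\mathfrak{I}\subset\mathcal{O}_{\mathbb{K}},\ \mathcal{N}(\mathfrak{I})\leq x\\ (\mathfrak{I},\mathfrak{q})=1}}\mathcal{F}_{k}(\mathfrak{I})\ \leq\ \mathcal{S}_{k,\mathbb{K}}(x),
\]
so the bound is immediate from Theorem~\ref{thm6 Tauberian}, even with an absolute implied constant. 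Below I instead sketch a self-contained derivation via the Wiener--Ikehara Tauberian theorem applied directly to the series attached to $T_{\mathfrak{q}}$, which is the route indicated in the text and which does not pass through the full summatory function $\mathcal{S}_{k,\mathbb{K}}(x)$.

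The plan is to analyse the Dirichlet series $F_k(s,\chi_0):=\sum_{(\mathfrak{I},\mathfrak{q})=1}\mathcal{F}_{k}(\mathfrak{I})\,\mathcal{N}(\mathfrak{I})^{-s}$, which converges absolutely for $\Re(s)>2$ because $\mathcal{F}_{k}(\mathfrak{I})\leq\mathcal{N}(\mathfrak{I})$. As $\mathcal{F}_{k}$ is multiplicative, the Euler factor at a prime $\mathfrak{p}\nmid\mathfrak{q}$ is $\sum_{j\geq0}(\mathcal{N}(\mathfrak{p}))^{\beta_j-js}$ with $\beta_j=j$ for $j<k$ and $\beta_j=1/j$ for $j\geq k$; putting $y=(\mathcal{N}(\mathfrak{p}))^{1-s}$ this equals $\sum_{j=0}^{k-1}y^{j}+\sum_{j\geq k}y^{j}(\mathcal{N}(\mathfrak{p}))^{1/j-j}$, whence
\[
(1-y)\sum_{j\geq0}(\mathcal{N}(\mathfrak{p}))^{\beta_j-js}=1-y^{k}+(1-y)\sum_{j\geq k}y^{j}(\mathcal{N}(\mathfrak{p}))^{1/j-j}.
\]
Because $(\mathcal{N}(\mathfrak{p}))^{1/j-j}\leq(\mathcal{N}(\mathfrak{p}))^{-3/2}$ for $j\geq k\geq2$, the Euler product over $\mathfrak{p}\nmid\mathfrak{q}$ of the right-hand side converges absolutely and uniformly on a half-plane $\Re(s)\geq2-\delta$ with $\delta=\delta(k)>0$, and there defines a holomorphic, nonvanishing function $G_k(s,\chi_0)$; denote its value at $s=2$ by $\mathcal{C}(\mathfrak{q},k)$.

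Collecting the factors $(1-y)^{-1}$ gives $\prod_{\mathfrak{p}\nmid\mathfrak{q}}(1-\mathcal{N}(\mathfrak{p})^{1-s})^{-1}=\zeta_{\mathbb{K}}(s-1)\prod_{\mathfrak{p}|\mathfrak{q}}(1-\mathcal{N}(\mathfrak{p})^{1-s})$, hence
\[
F_k(s,\chi_0)=\zeta_{\mathbb{K}}(s-1)\,G_k(s,\chi_0)\prod_{\mathfrak{p}|\mathfrak{q}}\bigl(1-\mathcal{N}(\mathfrak{p})^{1-s}\bigr).
\]
Since $\zeta_{\mathbb{K}}$ is meromorphic on $\mathbb{C}$ with its only pole at $1$ (simple), $F_k(s,\chi_0)$ continues meromorphically to $\Re(s)>2-\delta$ with a single simple pole, at $s=2$, of residue $\rho:=\lambda_{\mathbb{K}}\,\mathcal{C}(\mathfrak{q},k)\prod_{\mathfrak{p}|\mathfrak{q}}(1-1/\mathcal{N}(\mathfrak{p}))$. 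The auxiliary product $\mathcal{C}(\mathfrak{q},k)$ differs from the analogous product for $\mathfrak{q}=\mathcal{O}_{\mathbb{K}}$ — which, by the computation underlying Theorem~\ref{thm6 Tauberian}, equals $R_k(2)/\zeta_{\mathbb{K}}(k)$ — only in the finitely many Euler factors at $\mathfrak{p}|\mathfrak{q}$, each of which lies in a fixed subinterval of $(0,\infty)$ depending only on $\mathfrak{p}$; hence $\rho\ll_{\mathfrak{q}}\lambda_{\mathbb{K}}R_k(2)/\zeta_{\mathbb{K}}(k)$.

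Finally I would apply Wiener--Ikehara to $F_k(s,\chi_0)$: its Dirichlet coefficients are nonnegative, the series converges for $\Re(s)>2$, and $F_k(s,\chi_0)-\rho/(s-2)$ extends holomorphically across $\Re(s)=2$; therefore $T_{\mathfrak{q}}(x)\sim\tfrac{\rho}{2}x^{2}$, and together with the bound on $\rho$ and the first display this gives $\mathcal{S}_{k,\mathbb{K}}(x;\mathfrak{q},\mathfrak{a})\ll_{\mathfrak{q}}x^{2}\lambda_{\mathbb{K}}R_k(2)/\zeta_{\mathbb{K}}(k)$. There is no real obstacle here: the only step requiring a little care is the meromorphic continuation of $F_k(s,\chi_0)$ slightly past $\Re(s)=2$, i.e. the absolute convergence of $G_k(s,\chi_0)$ on $\Re(s)\geq2-\delta$, and this is routine and entirely parallel to the proof of Theorem~\ref{thm6 Tauberian}; the substance of the statement is just that restricting to a residue class prime to $\mathfrak{q}$ cannot enlarge the order of magnitude, made quantitative by the Tauberian theorem.
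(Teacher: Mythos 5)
Your first observation---that nonnegativity of $\mathcal{F}_k$ gives $\mathcal{S}_{k,\mathbb{K}}(x;\mathfrak{q},\mathfrak{a})\leq\mathcal{S}_{k,\mathbb{K}}(x)$ directly---is essentially the paper's argument, except that the paper reaches the same reduction via the orthogonality decomposition \eqref{s1} followed by the triangle inequality on $|\bar{\chi}(\mathfrak{a})\chi(\mathfrak{I})|\leq 1$; both routes then appeal to the asymptotic of Theorem~\ref{thm6 Tauberian}. Your positivity step is a bit cleaner and even yields an absolute implied constant. The second part of your submission is a genuine alternative that the paper does not carry out: rather than bounding the restricted sum by the full one, you compute the Euler product of the Dirichlet series attached to the sum over ideals coprime to $\mathfrak{q}$, check that the auxiliary factor $G_k(s,\chi_0)$ is holomorphic and nonvanishing on a half-plane $\Re(s)\geq 2-\delta$ (the same verification already used in the proof of Theorem~\ref{thm6 Tauberian}), and apply Wiener--Ikehara directly to that series. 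This is correct, and it has the merit of producing a sharper conclusion---a genuine asymptotic $T_{\mathfrak{q}}(x)\sim\frac{\rho}{2}x^2$ with an explicit residue $\rho$ involving the Euler factors at $\mathfrak{p}\nmid\mathfrak{q}$---without invoking Theorem~\ref{thm6 Tauberian} at all; what the paper's route buys is brevity, since it reduces to two lines once Theorem~\ref{thm6 Tauberian} is in place. Both arguments are sound.
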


In particular, for the classical case $\mathbb{K}=\mathbb{Q}$, we immediately obtain the asymptotic formula for $S_k(Q;q,a)$ as a consequence of Theorem \ref{thm4}.

\begin{cor}
   Suppose that $a$ and $q$ are positive integers with $\gcd(a,q)=1$. Then, for $k\geq 2$, we have
    \[S_k(Q;q,a)=c(q,a)Q^2+\BigOkq{Q^{\frac{2(2k-1)}{3k-2}}\exp{\left(-c\left(\frac{\log Q}{\log\log Q}\right)^{\frac{1}{3}}\right) }}, \]
    where $c(q,a)=\frac{M_{k,\chi_0}(2)}{2\phi(q)L(k,\chi_0)}\prod_{p|q}\left(1-\frac{1}{p} \right)$ and $M_{k,\chi_0}(2)$ is a constant depending on $k$ and $q$.  
\end{cor}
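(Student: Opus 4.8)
The plan is to obtain this statement as the special case $\mathbb{K}=\mathbb{Q}$ of Theorem~\ref{thm4}, for which the degree is $m=1$. First I would record the dictionary between the number-field objects appearing in Theorem~\ref{thm4} and their classical analogues. For $\mathbb{K}=\mathbb{Q}$ we have $\mathcal{O}_{\mathbb{Q}}=\mathbb{Z}$; every non-zero integral ideal is $(n)$ for a unique positive integer $n$ with $\mathcal{N}((n))=n$, and a non-zero prime ideal is $(p)$ with $\mathcal{N}((p))=p$. Hence the ideal factorization $(n)=\prod_i(p_i)^{\alpha_i}$ coincides with the prime factorization $n=\prod_i p_i^{\alpha_i}$, so $\mathcal{F}_k((n))=\prod_i p_i^{\beta_i}=I_k(n)$ with $\beta_i$ as in \eqref{beta}. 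Taking $\mathfrak{q}=(q)$ and $\mathfrak{a}=(a)$, the congruence $(n)\equiv(a)\pmod{(q)}$ is equivalent to $n\equiv a\pmod q$, and $(\mathfrak{a},\mathfrak{q})=1$ to $\gcd(a,q)=1$; therefore $\mathcal{S}_{k,\mathbb{Q}}(Q;(q),(a))=S_k(Q;q,a)$ as in \eqref{S_k}.

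Next I would translate the analytic ingredients. The Dedekind zeta function $\zeta_{\mathbb{Q}}(s)$ equals the Riemann zeta function $\zeta(s)$, whose residue at $s=1$ is $\lambda_{\mathbb{Q}}=1$; the Hecke characters modulo $(q)$ are exactly the Dirichlet characters $\chi$ modulo $q$, and $\mathcal{L}(s,\chi)=L(s,\chi)$, so $\chi_0$ is the principal character modulo $q$ and $\mathcal{L}(k,\chi_0)=L(k,\chi_0)$. Also $|(\mathcal{O}_{\mathbb{Q}}/(q))^*|=\phi(q)$ and $\prod_{\mathfrak{p}\mid(q)}\left(1-\frac{1}{\mathcal{N}(\mathfrak{p})}\right)=\prod_{p\mid q}\left(1-\frac{1}{p}\right)$. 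Under this dictionary the constant $R_k(2,\chi_0)$ of Theorem~\ref{thm4} collapses to the constant $M_{k,\chi_0}(2)$ of Dong et al.\ (depending on $k$ and $q$), since it is built from the same Euler product now restricted to the rational primes. Substituting into the main term of Theorem~\ref{thm4} gives
\[
\frac{Q^2\lambda_{\mathbb{Q}}R_k(2,\chi_0)}{2|(\mathcal{O}_{\mathbb{Q}}/(q))^*|\mathcal{L}(k,\chi_0)}\prod_{\mathfrak{p}\mid(q)}\left(1-\frac{1}{\mathcal{N}(\mathfrak{p})}\right)=\frac{M_{k,\chi_0}(2)}{2\phi(q)L(k,\chi_0)}\prod_{p\mid q}\left(1-\frac{1}{p}\right)Q^2=c(q,a)Q^2.
\]

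Finally, I would read off the error term by setting $m=1$ in the case distinction for $\mathcal{E}(x)$ in Theorem~\ref{thm4}: the first branch applies, unconditionally and with no hypothesis on $L$-functions, giving exactly $\mathcal{E}(Q)=\BigOkq{Q^{\frac{2(2k-1)}{3k-2}}\exp\left(-c\left(\frac{\log Q}{\log\log Q}\right)^{\frac{1}{3}}\right)}$. Combining this with the main term computed above yields the claimed asymptotic formula for $S_k(Q;q,a)$.

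The only point requiring genuine attention, rather than a real obstacle, is confirming that the abstract constant $R_k(2,\chi_0)$ of the number-field setting indeed reduces to the classical $M_{k,\chi_0}(2)$ of \cite{Meng}; this amounts to tracking the Dirichlet-series and Euler-product manipulations in the proof of Theorem~\ref{thm4} and checking that, once the local factors are specialized to $\mathbb{Z}$, one recovers the expression recorded there. Every other assertion in the corollary is a direct transcription of the number-field statement.
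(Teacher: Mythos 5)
Your argument is exactly the paper's: the corollary is stated as an immediate specialization of Theorem~\ref{thm4} to $\mathbb{K}=\mathbb{Q}$ (so $m=1$), and you have simply spelled out the routine dictionary (ideals of $\mathbb{Z}$ $\leftrightarrow$ positive integers, Hecke characters $\leftrightarrow$ Dirichlet characters, $\lambda_{\mathbb{Q}}=1$, $R_k(2,\chi_0)=M_{k,\chi_0}(2)$, etc.) that the paper leaves implicit. This is correct and matches the intended proof.
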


The above corollary provides the value of $c(q,a)$, when $(a,q)= 1$. In order to obtain the value of $c(5,0)$, we combine the above corollary with the result of Dong et al. \cite[Theorem 2]{Meng} concerning the asymptotic formula for the average of $I_k(n)$, over all $n\leq x$. Therefore,
\[c(5,0)=\frac{K_k(2)}{2\zeta(3)}-\frac{2M_{k,\chi_0}(2)}{\phi(q)L(k,\chi_0)}\prod_{p|q}\left(1-\frac{1}{p} \right), \]
where $K_k(s)$ is some specific function (see \cite[p. 358]{Meng}). It is evident that $c(5,a)=c(5,1)$ for $a\in\{2,3,4\}$, and numerical computation shows that $c(5,0)<c(5,1)$. This, in turn, implies that $S_k(Q;5,0)<S_k(Q;5,1)$ for sufficiently large $Q$. Thus, the sum $S_k(Q;q,0)$ is strongly biased toward the sum $S_k(Q;5,a)$ when $(a,5)=1$. The above corollary does not provide any insight into the bias between the summatory functions $S_k(Q;q,a_1)$ and $ S_k(Q;q,a_2)$ when $(a_1a_2,q)=1$. Therefore, to compare their growth, we prove a result analogous to the Littlewood's theorem.

Further, we prove the $\Omega$-result for the error term in the asymptotic formula of Theorem \ref{thm4}. We first state the Generalized Haselgrove's condition for the zeros of the Hecke $L$-function modulo $\mathfrak{q}$.

\noindent
\textbf{Haselgrove's condition for Hecke $L$-function modulo $\mathfrak{q}$}: For all Hecke characters $\chi$ (mod $\mathfrak{q}$), $\mathcal{L}(s,\chi)\ne 0$ for all $s\in(0,1)$.

Note that J. B. Rosser \cite{Rosser1, Rosser2} showed that no Dirichlet $L$-function attached to a real character modulo $q\leq 1000$ has a real zero in the strip $0<\Re(s)<1$, and Watkins \cite{Watkins} proved that one can take $q\leq 300000000$ if we restrict the Dirichlet $L$-functions to odd characters. For $q|24$, there are only real characters modulo $q$. Therefore, in the classical case $\mathbb{K}=\mathbb{Q}$, Haselgrove's condition is known to hold when modulus $q$ divides $24$.
 We proceed for our next result.
\begin{thm}\label{thm5}
 Let $\mathfrak{a}, \mathfrak{q}\subset\mathcal{O}_{\mathbb{K}}$ be fixed non-zero integral ideals with $(\mathfrak{a}, \mathfrak{q})=1$. Let $\Theta$ denote the supremum of the real part of zeros of the Hecke $L$-function modulo $\mathfrak{q}$. Assuming Haselgrove's condition for the Hecke $L$-function modulo $\mathfrak{q}$, we have
  \[\mathcal{S}_{k,\mathbb{K}}(x;\mathfrak{q},\mathfrak{a})-\frac{x^2\lambda_{\mathbb{K}}{R}_{k}(2,\chi_0)}{2|(\mathcal{O}_\mathbb{K}/\mathfrak{q})^*|\mathcal{L}(k,\chi_0)}\prod_{\mathfrak{p}|\mathfrak{q}}\left(1-\frac{1}{\mathcal{N}(\mathfrak{p})}\right)=\Omega_{\pm}\left(x^{1+\frac{\Theta}{k}-\epsilon}\right). \]  
\end{thm}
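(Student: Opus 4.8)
The plan is to translate the zeros of the Hecke $L$-functions modulo $\mathfrak{q}$ into poles of the Dirichlet series generating $\mathcal{S}_{k,\mathbb{K}}(x;\mathfrak{q},\mathfrak{a})$, and then to convert such a pole into a sign-changing oscillation of the error term by a Landau-type argument. First I would write $\mathcal{F}_{k}(\mathfrak{I})=\mathcal{N}(\mathfrak{I})\,g_{k}(\mathfrak{I})$ with $g_{k}$ multiplicative, $g_{k}(\mathfrak{p}^{\alpha})=1$ for $\alpha<k$ and $g_{k}(\mathfrak{p}^{\alpha})=\mathcal{N}(\mathfrak{p})^{1/\alpha-\alpha}$ for $\alpha\geq k$, and then compare Euler factors to obtain, for every Hecke character $\chi\pmod{\mathfrak{q}}$,
\[
D_{\chi}(s):=\sum_{\mathfrak{I}}\chi(\mathfrak{I})\,\mathcal{F}_{k}(\mathfrak{I})\,\mathcal{N}(\mathfrak{I})^{-s}=\frac{\mathcal{L}(s-1,\chi)}{\mathcal{L}\!\left(k(s-1),\chi^{k}\right)}\,R_{k,\chi}(s),
\]
where $R_{k,\chi}(s)$ collects the Euler contributions of the exponents $\alpha\geq k$; its factor at $\mathfrak{p}$ differs from $1$ by $O\!\left(\mathcal{N}(\mathfrak{p})^{1/k-k\Re(s)}\right)$, so $R_{k,\chi}$ is holomorphic for $\Re(s)>\tfrac1k+\tfrac1{k^{2}}$ and (as one checks) non-vanishing near the line $\Re(s)=1+\Theta/k$, which — since $\Theta\geq\tfrac12$ by the functional equation — lies in $\Re(s)>1$. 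By orthogonality of the Hecke characters,
\[
P(s):=\sum_{\mathfrak{I}\equiv\mathfrak{a}\,(\mathrm{mod}\,\mathfrak{q})}\mathcal{F}_{k}(\mathfrak{I})\,\mathcal{N}(\mathfrak{I})^{-s}=\frac{1}{|(\mathcal{O}_{\mathbb{K}}/\mathfrak{q})^{*}|}\sum_{\chi\,(\mathrm{mod}\,\mathfrak{q})}\overline{\chi}(\mathfrak{a})\,D_{\chi}(s),
\]
which is meromorphic for $\Re(s)>1$.

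Next I would isolate the main term. The numerator $\mathcal{L}(s-1,\chi_{0})$ — a twist of $\zeta_{\mathbb{K}}(s-1)$ by the Euler factors at $\mathfrak{p}\mid\mathfrak{q}$ — gives $P(s)$ a simple pole at $s=2$; using $\operatorname{Res}_{w=1}\mathcal{L}(w,\chi_{0})=\lambda_{\mathbb{K}}\prod_{\mathfrak{p}\mid\mathfrak{q}}(1-\mathcal{N}(\mathfrak{p})^{-1})$ one finds that $\tfrac1s P(s)$ has residue at $s=2$ equal to the coefficient $c_{M}$ of $x^{2}$ in the main term of Theorem~\ref{thm4}. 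Put $E(x):=\mathcal{S}_{k,\mathbb{K}}(x;\mathfrak{q},\mathfrak{a})-c_{M}x^{2}$. Abel summation gives $I(s):=\int_{1}^{\infty}E(x)\,x^{-s-1}\,\mathrm{d}x=\tfrac1s P(s)-\tfrac{c_{M}}{s-2}$ for $\Re(s)>2$; the two poles at $s=2$ cancel, so the right-hand side continues $I(s)$ meromorphically to $\Re(s)>1$, with singularities there only at the points $1+\rho/k$, $\rho$ a zero in $0<\Re(\rho)<1$ of one of the Hecke $L$-functions $\mathcal{L}(\cdot,\chi^{k})$. The bound $E(x)\ll x^{2}$ from Theorem~\ref{thm4} makes all of this legitimate.

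The step I expect to be the main obstacle is to produce a genuine singularity of $I(s)$ to the right of the line $\Re(s)=1+\Theta/k-\epsilon$. By the Haselgrove condition for the Hecke $L$-functions modulo $\mathfrak{q}$, none of them vanishes on $(0,1)$, so every zero $\rho=\beta+i\gamma$ that occurs above has $\gamma\neq 0$; and by definition of $\Theta$ there is, for each $\epsilon>0$, such a zero of some $\mathcal{L}(\cdot,\psi)$ with $\beta>\Theta-k\epsilon$. Among the zeros of $\mathcal{L}(\cdot,\psi)$ with real part $\beta$ one chooses $\rho$ with $\mathcal{L}(\rho/k,\chi')\neq 0$ and $R_{k,\chi'}(1+\rho/k)\neq 0$ for every $\chi'$ with $(\chi')^{k}=\psi$ — a thin set of zeros is thereby excluded — so that each such $D_{\chi'}$ has a genuine pole at $s_{0}:=1+\rho/k$; a non-cancellation argument, using $(\mathfrak{a},\mathfrak{q})=1$ and the linear independence of distinct characters, then shows that $I(s)$ itself has a pole at $s_{0}$ and at $\overline{s_{0}}$, with $\Re(s_{0})=1+\beta/k>1+\tfrac{\Theta}{k}-\epsilon$, while $I(s)$ remains holomorphic on the real segment $\left(1+\tfrac{\Theta}{k}-\epsilon,\,2\right)$ (for $\epsilon<\Theta/k$ this segment lies in $\Re(s)>1$, where every pole $1+\rho/k$ of $I$ is off the real axis by Haselgrove, and the pole at $s=2$ has been removed). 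A point requiring care here is to confirm that the abscissa produced by the zeros of the $\mathcal{L}(\cdot,\chi^{k})$ is indeed $1+\Theta/k$.

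Finally, the oscillation follows from Landau's theorem for Mellin transforms. Suppose, for some $c>0$, that $E(x)\leq c\,x^{1+\Theta/k-\epsilon}$ for all large $x$; then $g(x):=c\,x^{1+\Theta/k-\epsilon}-E(x)\geq 0$ eventually, and for $\Re(s)>1+\tfrac{\Theta}{k}-\epsilon$,
\[
G(s):=\int_{x_{0}}^{\infty}g(x)\,x^{-s-1}\,\mathrm{d}x=\frac{c\,x_{0}^{1+\Theta/k-\epsilon-s}}{s-1-\Theta/k+\epsilon}-I(s)+(\text{entire}),
\]
so that in the open half-plane $\Re(s)>1+\tfrac{\Theta}{k}-\epsilon$ the only singularities of $G$ are the off-axis poles of $I(s)$, one of which is at $s_{0}$. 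Since $g\geq 0$, Landau's theorem forces the abscissa of convergence $\sigma_{G}$ of $G$ to be a singular point of $G$ lying on the real axis; but $G$ is holomorphic for $\Re(s)>\sigma_{G}$ and has a pole at $s_{0}$, whence $\sigma_{G}\geq\Re(s_{0})>1+\tfrac{\Theta}{k}-\epsilon$ — and then $\sigma_{G}$ would be a real singularity of $G$ with $\sigma_{G}>1+\tfrac{\Theta}{k}-\epsilon$, of which there are none; contradiction. Hence $\mathcal{S}_{k,\mathbb{K}}(x;\mathfrak{q},\mathfrak{a})-c_{M}x^{2}=\Omega_{+}\!\left(x^{1+\Theta/k-\epsilon}\right)$, and applying the same reasoning to $c\,x^{1+\Theta/k-\epsilon}+E(x)\geq 0$ yields the $\Omega_{-}$ statement; together these give the theorem.
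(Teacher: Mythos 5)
Your proposal follows essentially the same route as the paper: write the error term's Mellin transform via the factorization $\mathcal{L}(s-1,\chi)/\mathcal{L}(ks-k,\chi^{k})\cdot R_{k}(s,\chi)$, observe that the poles at $s=2$ cancel, invoke the Haselgrove condition to clear real singularities from the segment $(1+\Theta/k-\epsilon,\,1+1/k]$, and apply Landau's theorem (Proposition~\ref{omega}) to the eventually-one-signed function to force a real singularity where none exists. The only real difference is that you flag (and sketch how to handle) the non-cancellation issues -- a zero of $\mathcal{L}(\cdot,\chi^{k})$ being killed by the numerator $\mathcal{L}(\cdot,\chi)$, by a vanishing of $R_{k,\chi}$, or by cancellation across characters sharing the same $k$-th power -- which the paper passes over silently; this makes your write-up a shade more careful at that point, but it is the same argument.
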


Next, we prove the following result for the sign changes of $\mathcal{S}_{k,\mathbb{K}}(x;\mathfrak{q},\mathfrak{a}_1)-\mathcal{S}_{k,\mathbb{K}}(x;\mathfrak{q},\mathfrak{a}_2)$ analogous to Littlewood's theorem.
\begin{thm}\label{thm6}
 Let $\mathfrak{q}\subset\mathcal{O}_{\mathbb{K}}$ be a fixed non-zero integral ideal and $\mathfrak{a}_1, \mathfrak{a}_2\subset\mathcal{O}_{\mathbb{K}}$ be ideals such that $\mathfrak{a}_1\not\equiv\mathfrak{a}_2\pmod{\mathfrak{q}}$ and $(\mathfrak{a}_1\mathfrak{a}_2,\mathfrak{q})=1$. Assuming Haselgrove's condition for Hecke $L$-function modulo $\mathfrak{q}$, the set of values of $x$ for which the difference $\mathcal{S}_{k,\mathbb{K}}(x;\mathfrak{q},\mathfrak{a}_1)-\mathcal{S}_{k,\mathbb{K}}(x;\mathfrak{q},\mathfrak{a}_2)$ is strictly positive and the set of values of $x$ for which the difference $\mathcal{S}_{k,\mathbb{K}}(x;\mathfrak{q},\mathfrak{a}_1)-\mathcal{S}_{k,\mathbb{K}}(x;\mathfrak{q},\mathfrak{a}_2)$ is strictly negative are unbounded.  
\end{thm}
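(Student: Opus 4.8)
The plan is to follow the classical Landau/Littlewood oscillation strategy, transported to the number field setting via the Hecke $L$-functions modulo $\mathfrak{q}$. First I would write the difference $D(x) := \mathcal{S}_{k,\mathbb{K}}(x;\mathfrak{q},\mathfrak{a}_1) - \mathcal{S}_{k,\mathbb{K}}(x;\mathfrak{q},\mathfrak{a}_2)$ by detecting the congruence conditions with the orthogonality relations for Hecke characters modulo $\mathfrak{q}$: one gets
\[
D(x) = \frac{1}{|(\mathcal{O}_{\mathbb{K}}/\mathfrak{q})^*|} \sum_{\chi \Mod{\mathfrak{q}}} \bigl(\overline{\chi(\mathfrak{a}_1)} - \overline{\chi(\mathfrak{a}_2)}\bigr) \sum_{\substack{\mathfrak{I}\subset\mathcal{O}_{\mathbb{K}}\\ \mathcal{N}(\mathfrak{I})\leq x}} \chi(\mathfrak{I})\,\mathcal{F}_k(\mathfrak{I}).
\]
Since $\mathfrak{a}_1 \not\equiv \mathfrak{a}_2 \Mod{\mathfrak{q}}$, the principal character contributes $0$, so only non-principal $\chi$ survive; moreover at least one non-principal $\chi$ has $\chi(\mathfrak{a}_1) \neq \chi(\mathfrak{a}_2)$. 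The associated Dirichlet series $\sum_{\mathfrak{I}} \chi(\mathfrak{I})\mathcal{F}_k(\mathfrak{I}) \mathcal{N}(\mathfrak{I})^{-s}$ should, by the Euler-product manipulations already used to prove Theorems \ref{thm6 Tauberian} and \ref{thm4}, factor as $\mathcal{L}(s-1,\chi) \cdot H_k(s,\chi)$, where $H_k(s,\chi)$ is given by an Euler product converging absolutely (and bounded away from $0$) in a half-plane $\Re(s) > 1 + \delta_k$ for some $\delta_k < 1$; the shift by $1$ reflects the fact that $\mathcal{F}_k(\mathfrak{I})$ is roughly of size $\mathcal{N}(\mathfrak{I})$, consistent with the $x^2$ main term.

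The core of the argument is then a contradiction using the Landau oscillation theorem. Suppose $D(x) \geq 0$ for all sufficiently large $x$ (the case $D(x) \leq 0$ is symmetric). Form the Mellin / Laplace transform $\int_1^\infty D(x) x^{-s-1}\,dx$, which by partial summation is, up to entire factors, a combination of $\sum_\chi c_\chi \mathcal{L}(s-1,\chi) H_k(s,\chi)/(s-\text{something})$ plus terms holomorphic in $\Re(s) > 1$. Because each non-principal $\mathcal{L}(s,\chi)$ is entire, $D(x)$'s transform is holomorphic in the half-plane $\Re(s) > 1$ with no singularity at $s=2$ coming from a pole — the $x^2$ main terms cancel in the difference. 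By Landau's theorem, if $D(x)$ is eventually of one sign, its transform has a singularity at its abscissa of convergence, which must be real. Haselgrove's condition for the Hecke $L$-functions modulo $\mathfrak{q}$ guarantees $\mathcal{L}(s,\chi) \neq 0$ for $s \in (0,1)$, hence $\mathcal{L}(s-1,\chi) \neq 0$ for $s \in (1,2)$; combined with the fact that the $H_k$ factors are nonzero and holomorphic there, one deduces the transform of $D(x)$ extends holomorphically past $\Re(s) = 1$ along the real axis — contradicting Landau unless $D$ is not eventually one-signed. Running this for both signs gives the unboundedness of both the positive-value set and the negative-value set.

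The main obstacle — and the step I would spend the most care on — is pinning down the analytic continuation and nonvanishing of the auxiliary factor $H_k(s,\chi)$ together with an honest estimate of its region of convergence, and in particular verifying that no zero of $\mathcal{L}(s-1,\chi)$ on the critical line $\Re(s) = 3/2$ (i.e. $\Re(s-1)=1/2$, which exists under GRH) can be used to manufacture a singularity on the real axis to the right of $1$: Landau's theorem only forces a real singularity, and Haselgrove's condition is precisely what rules out real zeros of $\mathcal{L}$ in $(0,1)$, so the argument needs the transform of $D(x)$ to have its abscissa of convergence strictly below $1$ unless $D$ changes sign. Concretely I would show: (i) $D(x) = \Omega(x^{1+\Theta/k - \epsilon})$ is not directly needed, but the transform of $D(x)$, as a sum over non-principal $\chi$, is holomorphic and bounded in a neighborhood of the segment $(1, 2]$ of the real axis because there $\mathcal{L}(s-1,\chi)$ is holomorphic (entire) and nonvanishing by Haselgrove; (ii) therefore its abscissa of convergence $\sigma_0 \leq 1$, whereas a positive function of the rough size $x$ would have $\sigma_0 \geq 1$ coming from the size of the summands that do \emph{not} cancel — here one must argue that the individual character sums are genuinely of size larger than $x \log x$, or equivalently that $D(x)/x \to \infty$ would follow were $D$ one-signed, which is incompatible with $\sigma_0 \le 1$; carrying out (ii) cleanly, by comparing $D(x)$ against $\int_1^x D(t)\,dt/t$ and the behaviour of the transform at $s=1$, is the delicate bookkeeping. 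Everything else — orthogonality, the Euler product factorisation (already done for Theorems \ref{thm6 Tauberian} and \ref{thm4}), and the invocation of Landau's oscillation theorem — is routine.
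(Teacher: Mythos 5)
Your factorization of the Dirichlet series is wrong, and this is a fatal gap rather than a technical omission. You write that $\sum_{\mathfrak{I}} \chi(\mathfrak{I})\mathcal{F}_k(\mathfrak{I})\mathcal{N}(\mathfrak{I})^{-s}$ factors as $\mathcal{L}(s-1,\chi)\cdot H_k(s,\chi)$ with $H_k$ holomorphic and bounded away from $0$ for $\Re(s) > 1 + \delta_k$. But the Euler-product computation (equation \eqref{N6} of the paper, and already used in Theorems \ref{thm6 Tauberian} and \ref{thm4}) gives
\[
\sum_{\mathfrak{I}}\frac{\chi(\mathfrak{I})\mathcal{F}_k(\mathfrak{I})}{(\mathcal{N}\mathfrak{I})^s} = \frac{\mathcal{L}(s-1,\chi)}{\mathcal{L}(ks-k,\chi^k)}\,R_k(s,\chi),
\]
with an essential $1/\mathcal{L}(ks-k,\chi^k)$ in the denominator: the truncation of the geometric progression at exponent $k$ (i.e.\ $\sum_{\alpha=0}^{k-1}\chi(\mathfrak{p})^\alpha(\mathcal{N}\mathfrak{p})^{-\alpha(s-1)} = (1-\chi(\mathfrak{p})^k(\mathcal{N}\mathfrak{p})^{-k(s-1)})/(1-\chi(\mathfrak{p})(\mathcal{N}\mathfrak{p})^{-(s-1)})$) introduces that factor, and it cannot be absorbed into a nonvanishing Euler product. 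This is not a cosmetic difference: the zeros of $\mathcal{L}(ks-k,\chi^k)$, whose real parts cluster near $1 + \tfrac{1}{2k}$, are exactly the singularities the argument must detect.

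Because of this, your step (ii) cannot close. Under your (incorrect) factorization, the transform of $D(x)$ is holomorphic for $\Re(s) > 1 + \delta_k$ and has no singularities anywhere that Landau's theorem could clash with, so assuming $D(x)$ is eventually one-signed produces no contradiction — you end up trying to argue that a one-signed $D$ must have transform abscissa $\geq 1$, but there is no lower bound on $D(x)$ in hand, and indeed with your factorization the individual character sums would be too small to force one. The paper's proof avoids this by (a) adding an artificial term $\pm c x^{1 + 1/(2k) - \epsilon}$ to form $\mathcal{A}(x)$, so that if $D$ were eventually one-signed the function $\mathcal{A}$ would be too, and (b) observing that the transform $g(s)$ of $\mathcal{A}(x)$ necessarily has poles at the zeros of $\mathcal{L}(ks-k,\chi^k)$ in $\Re(s) > 1 + \tfrac{1}{2k} - \epsilon$ (since these cannot all be canceled by zeros of $\mathcal{L}(s-1,\chi)$, which lie near $\sigma = 3/2$). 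Haselgrove's condition plays the role you ascribe to it — ruling out real zeros, so $g$ is holomorphic on the real segment $(1 + \tfrac{1}{2k} - \epsilon, 1 + \tfrac{1}{k}]$ and Landau's theorem (Proposition \ref{Landau}) applies — but the contradiction requires the complex zeros of $\mathcal{L}(ks-k,\chi^k)$ to serve as poles of $g(s)$, which your factorization erases. Fixing the factorization would turn your outline into essentially the paper's Knapowski–Turán argument.
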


The aim of the next section is to introduce the irrational factor function over function fields and investigate its distribution.
\subsection{Irrational factor function over function fields}
Let $\mathbb{F}_q$ be a finite field with $q$ elements and let $A=\mathbb{F}_q[X]$ be a polynomial ring over $\mathbb{F}_q$. We denote the degree of a polynomial $f\in\mathbb{F}_q[X]$ by $\deg(f)$, and its norm $|f|$ is defined as $q^{\deg(f)}$. Let $\Phi(f)$ denote the number of elements in the group $(A/fA)^*$. Every non-zero polynomial $f\in\mathbb{F}_q[X]$ can be written in the form
$f(X)=\alpha P_1^{\alpha_1}P_2^{\alpha_2}\cdots P_l^{\alpha_l}$,
where $P_i$ are monic irreducible polynomials. The irrational factor function for $f\in\mathbb{F}_q[X]$ is defined as:
\[\mathfrak{F}_{k}(f)=\prod_{i=1}^{l}|P_i|^{\beta_i}, \]
where $\beta_i,\ 1\leq i\leq l$ are as in \eqref{beta}.
Let $ \mathfrak{g}, \mathfrak{m}\in\mathbb{F}_q[X]$ be non-zero polynomials. We denote
\[\mathfrak{S}_{k}(N; \mathfrak{m},\mathfrak{g})=\sum_{\substack{f\in\mathbb{F}_q[X]\\\deg(f)\leq N\\f\equiv\mathfrak{g}\pmod{\mathfrak{m}}}}\mathfrak{F}_{k}(f). \]

To describe our results, we fix some notation. Let $\chi$ be a Dirichlet character modulo $\mathfrak{m}$ over the polynomial ring $A$, and let $L_q(s,\chi)$ denote the corresponding Dirichlet $L$-function modulo $\mathfrak{m}$.
In this section, our first result concerns the asymptotic formula for $\mathfrak{S}_{k}(N; \mathfrak{m},\mathfrak{g})$.
\begin{thm}\label{thm7}
 Let $q\geq 2$ be a fixed integer. Let $\mathfrak{m}, \mathfrak{g}\in\mathbb{F}_q[X]$ be fixed non-zero polynomials such that $(\mathfrak{g},\mathfrak{m})=1$. For $k\geq 2$, we have
    \[\mathfrak{S}_{k}(N; \mathfrak{m},\mathfrak{g})=\frac{q^{2N}\mathcal{M}_k(2,\chi_0)}{2 \Phi(\mathfrak{m})L_q(k,\chi_0)\log q}\prod_{P|\mathfrak{m}}\left(1-\frac{1}{|P|} \right)+\BigOmk{q^{N(1+\frac{1}{2k}+\epsilon)}}, \]
    where $\mathcal{M}_k(2,\chi_0)$ is a constant depending on $k$ and $\mathfrak{m}$.
\end{thm}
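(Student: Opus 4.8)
The plan is to detect the congruence $f\equiv\mathfrak{g}\pmod{\mathfrak{m}}$ by Dirichlet characters and then analyse the resulting generating Dirichlet series, exploiting that over $A=\mathbb{F}_q[X]$ every Dirichlet $L$-function is a rational function of $u=q^{-s}$ and the Riemann Hypothesis (Weil) holds unconditionally. First, by the orthogonality relations for the Dirichlet characters modulo $\mathfrak{m}$ (and since $(\mathfrak{g},\mathfrak{m})=1$),
\[\mathfrak{S}_k(N;\mathfrak{m},\mathfrak{g})=\frac{1}{\Phi(\mathfrak{m})}\sum_{\chi\bmod\mathfrak{m}}\overline{\chi}(\mathfrak{g})\,\mathfrak{T}_k(N,\chi),\qquad \mathfrak{T}_k(N,\chi):=\sum_{\substack{f\text{ monic}\\ \deg f\leq N}}\chi(f)\,\mathfrak{F}_k(f),\]
the non-monic polynomials and the leading unit being handled in the standard way. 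For each $\chi$ I would study the Dirichlet series $F(s,\chi):=\sum_{f\text{ monic}}\chi(f)\mathfrak{F}_k(f)|f|^{-s}$, which admits an Euler product since $\mathfrak{F}_k$ is multiplicative.

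The decisive step is the factorization
\[F(s,\chi)=\frac{L_q(s-1,\chi)}{L_q(k(s-1),\chi^k)}\,H(s,\chi),\]
where the quotient of $L$-functions accounts for the exponents $\beta_i=\alpha_i$ when $\alpha_i<k$ (so that $\mathfrak{F}_k(f)/|f|$ approximates the indicator of $k$-free polynomials, exactly as over $\mathbb{Z}$), and $H(s,\chi)$ is the Euler product whose local factor at $P\nmid\mathfrak{m}$ equals
\[1+\frac{\displaystyle\sum_{\alpha\geq k}\chi(P)^{\alpha}|P|^{1/\alpha-\alpha s}}{1+\chi(P)|P|^{1-s}+\cdots+\chi(P)^{k-1}|P|^{(k-1)(1-s)}}=1+\BigO{|P|^{1/k-k\Re s}}.\]
Hence $H(s,\chi)$ converges absolutely and is analytic and bounded for $\Re s>1$, in particular on a neighbourhood of the real segment from $1+\frac{1}{2k}$ to $2$. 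Regarding $F$ also as a power series in $u=q^{-s}$, which has radius of convergence $q^{-2}$, the function-field Perron formula gives
\[\mathfrak{T}_k(N,\chi)=\frac{1}{2\pi i}\oint_{|u|=r}\frac{F(u,\chi)}{(1-u)\,u^{N+1}}\,du\qquad(0<r<q^{-2}).\]

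I would then enlarge the radius $r$. For $\chi=\chi_0$ one has $L_q(s-1,\chi_0)=\zeta_A(s-1)\prod_{P|\mathfrak{m}}(1-|P|^{1-s})$, with a simple pole at $s=2$, i.e.\ at $u=q^{-2}$, while $1/L_q(k(s-1),\chi_0)=(1-q^{k+1}u^{k})\prod_{P|\mathfrak{m}}(1-(q^{k}u^{k})^{\deg P})^{-1}$ and $H(s,\chi_0)$ are holomorphic for $|u|<q^{-1}$; crossing the pole at $u=q^{-2}$ therefore produces the stated main term, $\mathcal{M}_k(2,\chi_0)$ being the explicit constant obtained from $H(2,\chi_0)$ together with the normalising factors (the residue $1/\log q$ of $\zeta_A$, the leftover $\prod_{P|\mathfrak{m}}(1-1/|P|)$, and elementary $q$-factors from $\sum_{n\leq N}q^{2n}$). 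For $\chi\ne\chi_0$, $L_q(s-1,\chi)$ is a polynomial in $u$ (hence entire), $H(s,\chi)$ is holomorphic for $|u|<q^{-1}$, and the only singularities of $F(u,\chi)$ with $|u|\leq q^{-1-\frac{1}{2k}}$ come from the zeros of the denominator $L_q(k(s-1),\chi^k)$; by the Riemann Hypothesis for Dirichlet $L$-functions over $\mathbb{F}_q[X]$, those zeros with $\Re s>1$ lie exactly on $\Re s=1+\frac{1}{2k}$, i.e.\ on $|u|=q^{-1-\frac{1}{2k}}$. Hence each $F(u,\chi)$ with $\chi\ne\chi_0$, as well as the remaining integral for $\chi_0$, continues holomorphically to $|u|<q^{-1-\frac{1}{2k}}$; taking the contour at radius $q^{-1-\frac{1}{2k}}(1-\eta)$ for a small fixed $\eta=\eta(\epsilon)>0$ and bounding the integrand there via the boundedness of $L_q$, of $H$, and of $1/L_q(k(s-1),\chi^k)$ away from its zeros, each such integral is $\BigOmk{q^{N(1+\frac{1}{2k}+\epsilon)}}$. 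Summing over the finitely many $\chi$ and dividing by $\Phi(\mathfrak{m})$ yields the theorem.

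I expect the main obstacle to be the factorization step together with the correct localisation of the singularities that govern the error term: showing that $H(s,\chi)$ extends holomorphically up to $\Re s=1$, and --- the crucial point --- recognising that the binding poles of $F(u,\chi)$ are the zeros of the denominator $L_q(k(s-1),\chi^k)$, which by Weil's theorem sit precisely on $\Re s=1+\frac{1}{2k}$. This plays the role that the Generalized Lindel\"{o}f hypothesis plays for $m\geq 4$ in Theorem \ref{thm4}, with the advantage that here it is a theorem, so the bound is unconditional. The remaining points --- the leading-unit convention in the orthogonality step, and the bookkeeping needed to match the normalisation $\frac{1}{2\Phi(\mathfrak{m})L_q(k,\chi_0)\log q}\prod_{P|\mathfrak{m}}(1-1/|P|)$ --- are routine.
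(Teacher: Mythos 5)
You take a genuinely different, and arguably more natural, route for this setting. Both arguments start from orthogonality of Dirichlet characters modulo $\mathfrak{m}$ and the same factorization $F(s,\chi)=L_q(s-1,\chi)\,L_q(k(s-1),\chi^k)^{-1}H(s,\chi)$ with $H$ analytic and bounded for $\Re s>1$, and both identify the binding singularities as the Weil zeros of $L_q(k(s-1),\chi^k)$ on $\Re s=1+\tfrac{1}{2k}$. The paper then transplants the $\mathbb{Z}$-argument: a truncated vertical-line Perron formula (Proposition~\ref{prop7}), a rectangular contour shift to $\Re s=1+\tfrac{1}{2k}+\epsilon$, Lindel\"of-type bounds on the horizontal and vertical segments (available unconditionally here), and an optimization of the truncation height $T=q^{N-N/(2k)-\epsilon}$. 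You instead treat $F$ as a power series in $u=q^{-s}$, recover the partial sum exactly through the closed contour $\oint_{|u|=r}F(u)\bigl((1-u)u^{N+1}\bigr)^{-1}du$, and push the circle across the single pole $u=q^{-2}$ out to radius $q^{-1-1/(2k)}(1-\eta)$. Your version is exact (no truncation error, no $T$ to optimize) and automatically respects the periodicity $F(s)=F(s+2\pi i/\log q)$.

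That last point is worth your attention rather than being left as ``routine bookkeeping.'' In the $s$-plane the function-field zeta $\zeta_q(s-1)$ has simple poles at every $s=2+2\pi i m/\log q$, $m\in\mathbb{Z}$, and with $T$ as large as $q^{N-N/(2k)-\epsilon}$ essentially all of them lie inside the rectangle; each contributes to the coefficient of $q^{2N}$. The paper's residue step keeps only the $m=0$ term and thereby produces the factor $1/(2\log q)$ appearing in the statement. Summing the whole family of residues (equivalently, computing the single $u$-plane residue at $u=q^{-2}$ as you do, and restoring the half-weight at $\deg f=N$ that the $\sum^{\prime}$ of Proposition~\ref{prop7} suppresses) yields the factor $1/(1-q^{-2})$ instead, and $1/(1-q^{-2})\neq 1/(2\log q)$ for any $q\geq 2$. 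So if you actually carry out the bookkeeping you defer, you will find your main-term constant does not match the stated one --- and I believe yours is the correct one. Your remark that the residue of $\zeta_q$ is ``$1/\log q$'' is an $s$-versus-$u$ slip ($1/\log q$ is the residue of $\zeta_q(w)$ at $w=1$ in the $w$-variable, whereas the residue of $1/(1-q^{2}u)$ at $u=q^{-2}$ is $-q^{-2}$), and that slip is probably what concealed the discrepancy from you.
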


Next, we derive an $\Omega$-result for the error term in the above theorem. It is well known that the Riemann hypothesis is true in function fields, thus in terms of omega result, we obtain only an $\epsilon$ saving in the error term of the above theorem. The Haselgrove's condition on the zeros of the Dirichlet $L$-function modulo $\mathfrak{m}$ over function fields becomes $L_q(1/2,\chi)\ne 0$. Thus, our next result is the following theorem.
\begin{thm}\label{thm9}
   Let $q\geq 2$ be a fixed integer. Let $\mathfrak{m}, \mathfrak{g}\in\mathbb{F}_q[X]$ be fixed non-zero polynomials with $(\mathfrak{g},\mathfrak{m})=1$. Assume $L_q(1/2,\chi)\ne 0$. Then, for $k\geq 2$, we have
   \[\mathfrak{S}_{k}(N; \mathfrak{m},\mathfrak{g})-\frac{q^{2N}\mathcal{M}_k(2,\chi_0)}{2\Phi(\mathfrak{m})L_q(k,\chi_0)\log q}\prod_{P|\mathfrak{m}}\left(1-\frac{1}{|P|} \right)=\Omega_{\pm}(q^{N(1+\frac{1}{2k}-\epsilon)}). \]
\end{thm}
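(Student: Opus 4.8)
The plan is to mimic the classical $\Omega$-argument for Chebyshev's bias in the function field setting, where the Riemann hypothesis for $L_q(s,\chi)$ is available and all nontrivial zeros lie on $\Re(s)=1/2$. First I would set up the relevant Dirichlet series. Writing $D_k(s,\chi)=\sum_f \chi(f)\mathfrak{F}_k(f)|f|^{-s}$ for the generating function twisted by a Dirichlet character $\chi$ modulo $\mathfrak{m}$, the Euler product and the definition of $\beta_i$ in \eqref{beta} show that $D_k(s,\chi)$ factors as a product of $L_q(s-1,\chi)$ (the ``main'' part, coming from the exponent $\beta_i=1$ contribution at the primes with $\alpha_i<k$... more precisely $\prod_P(1-\chi(P)|P|^{1-s})^{-1}$) times a correction factor $\mathcal{M}_k(s,\chi)$ that is holomorphic and bounded in a half-plane $\Re(s)>1+\tfrac{1}{2k}-\delta$ for a fixed small $\delta>0$; this is exactly the structure already used to prove Theorem \ref{thm7}. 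Performing the orthogonality-of-characters decomposition of $\mathfrak{S}_k(N;\mathfrak{m},\mathfrak{g})$ and subtracting the main term (which comes from the principal character $\chi_0$ and the pole at $s=2$), the error term is governed by $\sum_{\chi\neq\chi_0}\bar{\chi}(\mathfrak{g})$ times a coefficient sum whose generating function is $\mathcal{M}_k(s,\chi)L_q(s-1,\chi)$, plus the contribution of the nontrivial zeros of $L_q(s-1,\chi_0)$.

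Next I would pass to an explicit formula. In the function field case $L_q(s,\chi)$ is (up to the completion factors) a polynomial in $q^{-s}$ of degree at most $\deg(\mathfrak{m})-1$, so there are only finitely many zeros $\rho=1/2+i\gamma$ (with $\gamma$ real under RH / by Weil), and the ``explicit formula'' for a coefficient sum $\sum_{\deg f\le N}(\cdots)$ is a genuine finite trigonometric sum of the shape
\[
\mathfrak{S}_k(N;\mathfrak{m},\mathfrak{g})-(\text{main term})=\sum_{\chi}\sum_{\rho_\chi} c_{\chi,\rho}\, q^{N(1+\rho_\chi)/k}+(\text{lower order}),
\]
where $\rho_\chi$ runs over zeros of $L_q(\cdot,\chi)$ and the factor $1/k$ appears because $\mathfrak{F}_k$ is of size about $|f|$ while the relevant variable after the substitution $s\mapsto$ (shift by the exponent at a $k$-th power prime) is scaled; concretely the term $q^{N(1+\frac{1}{2k})}$ in Theorem \ref{thm9} is $|f|^{(1+1/2)/k}$-sized evaluated at the edge $\Re(\rho)=1/2$. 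The hypothesis $L_q(1/2,\chi)\ne0$ guarantees there is no zero exactly at the real point, so the dominant oscillatory terms genuinely oscillate (nonzero imaginary parts), and moreover the leading coefficient $\mathcal{M}_k(2,\chi_0)$ and the residue structure are such that the corresponding amplitude $c_{\chi,\rho}$ is nonzero.

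Finally I would extract the $\Omega_\pm$ conclusion. Since the right-hand side is a finite sum $\sum_j A_j q^{N\theta_j}e^{iN\phi_j}$ with the maximal $\theta_j$ equal to $1+\tfrac{1}{2k}$ (attained at zeros on the critical line), at least one amplitude $A_j\ne0$, and the relevant $\phi_j\ne0$, a standard Dirichlet/Kronecker argument on the equidistribution of $(N\phi_j \bmod 2\pi)$ — or, more elementarily, an application of Kronecker's theorem or even a pigeonhole/Dirichlet box argument on the vector $(N\phi_1,\dots,N\phi_r)$ — produces infinitely many integers $N$ for which $\Re\big(\sum_j A_j q^{N\theta_j}e^{iN\phi_j}\big)>c\,q^{N(1+\frac{1}{2k})}$ and infinitely many for which it is $<-c\,q^{N(1+\frac{1}{2k})}$, for a suitable $c>0$. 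Dividing by $q^{\epsilon N}$ for any $\epsilon>0$ gives the stated $\Omega_\pm(q^{N(1+\frac{1}{2k}-\epsilon)})$. The main obstacle — and the step I would spend the most care on — is verifying that the amplitude attached to at least one critical-line zero is nonzero: this requires checking that $\mathcal{M}_k(s,\chi_0)$ does not vanish at the relevant points and that there is no unexpected cancellation between the contributions of $\chi_0$ and the other characters after the orthogonality step; in the function field setting this amounts to a concrete nonvanishing statement about an explicit polynomial/Euler-product factor, which one handles by examining its Euler factors at primes $P\nmid\mathfrak{m}$ directly. (The hypothesis $L_q(1/2,\chi)\ne0$ is precisely what removes the only other way the argument could fail, namely a real zero contributing a non-oscillating term of the same size that could be permanently biased.)
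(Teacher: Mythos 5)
Your proposal has a fatal gap in the identification of the singularities driving the $\Omega$-result. The generating function factors (as the paper records in \eqref{37}) as $F(s,\chi)=\dfrac{L_q(s-1,\chi)}{L_q(ks-k,\chi^k)}\,\mathcal{M}_k(s,\chi)$, where $\mathcal{M}_k(s,\chi)$ is an Euler product absolutely convergent and holomorphic for $\Re(s)>1$. You instead describe the factorization as $L_q(s-1,\chi)$ times a ``correction factor $\mathcal{M}_k(s,\chi)$ that is holomorphic and bounded for $\Re(s)>1+\tfrac{1}{2k}-\delta$'' and attribute the oscillations to ``the nontrivial zeros of $L_q(s-1,\chi_0)$.'' This misses the whole engine of the argument: the factor $1/L_q(ks-k,\chi^k)$ has poles at the points $s$ with $ks-k$ a critical-line zero of $L_q(\cdot,\chi^k)$, i.e.\ at $\Re(s)=1+\tfrac{1}{2k}$, and it is exactly those poles that force the error term to be $\Omega_\pm\big(q^{N(1+\frac{1}{2k}-\epsilon)}\big)$. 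Zeros of $L_q(s-1,\chi)$ lie at $\Re(s)=\tfrac{3}{2}$, make the integrand \emph{vanish} rather than blow up, and do not give the stated exponent. Your explicit-formula ansatz $\sum_\chi\sum_{\rho_\chi}c_{\chi,\rho}\,q^{N(1+\rho_\chi)/k}$ inherits this confusion: with $\rho_\chi=\tfrac12+i\gamma$, the exponent $(1+\rho_\chi)/k$ has real part $\tfrac{3}{2k}$, not $1+\tfrac{1}{2k}$; the correct pole of $1/L_q(ks-k,\chi^k)$ is at $s=1+\rho_{\chi^k}/k$, contributing $q^{Ns}$ of modulus $q^{N(1+\frac{1}{2k})}$. (The role of the hypothesis $L_q(1/2,\chi)\neq 0$ is precisely to ensure none of these poles lies on the real axis, so no permanently-signed term competes at the same size.)

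On method: even after fixing the factorization, your route (explicit formula over the finitely many zeros, then Kronecker/Dirichlet equidistribution in $N$, then a nonvanishing check for at least one residue) is a more explicit argument than the one the paper actually uses. The paper proceeds indirectly via Proposition~\ref{omega} (the integral analogue of Landau's theorem): it forms $\mathscr{A}(N)=-\mathfrak{S}_k(N;\mathfrak{m},\mathfrak{g})+(\text{main term})+q^{N(1+\frac{1}{2k}-\epsilon)}$, computes $\int_1^\infty\mathscr{A}(N)q^{-N(s+1)}\,dq^N$ explicitly, uses $L_q(1/2,\chi)\neq 0$ to continue this function holomorphically along the real segment down to $1+\tfrac{1}{2k}-\epsilon$, and then derives a contradiction (from the non-real poles at $\Re(s)=1+\tfrac{1}{2k}$) under the assumption that $\mathscr{A}(N)$ is eventually of one sign. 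Your approach could, in principle, deliver a sharper quantitative conclusion, but as written it does not locate the correct singularities and so would not produce the claimed exponent.
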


We now proceed to our next result concerning the sign changes of $\mathfrak{S}_{k}(N; \mathfrak{m},\mathfrak{g})$.
\begin{thm}\label{thm8}
   Let $q\geq 2$ be an integer. Let $\mathfrak{m}\in\mathbb{F}_q[X]$ be a fixed non-zero polynomial and $\mathfrak{g}_1, \mathfrak{g}_2\in\mathbb{F}_q[X]$ such that $\mathfrak{g}_1\not\equiv\mathfrak{g}_2\pmod{\mathfrak{m}}$ with  $(\mathfrak{g}_1\mathfrak{g}_2,\mathfrak{m})=1$. Suppose $k\geq 2$ is an integer such that $\chi^k\ne\chi_0$ for any non-principal character $\chi\pmod{\mathfrak{m}}$. Assume that $L_q(1/2,\chi)\ne 0$. Then the function $\mathfrak{S}_{k}(N; \mathfrak{m},\mathfrak{g}_1)-\mathfrak{S}_{k}(N; \mathfrak{m},\mathfrak{g}_2)$ changes sign infinitely often.
\end{thm}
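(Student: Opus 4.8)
The plan is to reduce the assertion, by orthogonality of Dirichlet characters modulo $\mathfrak{m}$, to an oscillation statement for character twists of $\mathfrak{F}_{k}$, and then, for each twist, to produce an explicit formula whose leading term is a finite exponential sum indexed by the zeros of a function‑field Dirichlet $L$‑function. Set $D(N):=\mathfrak{S}_{k}(N;\mathfrak{m},\mathfrak{g}_1)-\mathfrak{S}_{k}(N;\mathfrak{m},\mathfrak{g}_2)$. Orthogonality gives
\[
D(N)=\frac{1}{\Phi(\mathfrak{m})}\sum_{\chi\bmod\mathfrak{m}}\bigl(\overline{\chi(\mathfrak{g}_1)}-\overline{\chi(\mathfrak{g}_2)}\bigr)\,S_{\chi}(N),\qquad S_{\chi}(N):=\sum_{\substack{f\ \mathrm{monic}\\ \deg f\le N}}\mathfrak{F}_{k}(f)\chi(f).
\]
Since $(\mathfrak{g}_i,\mathfrak{m})=1$, the principal character contributes $\chi_0(\mathfrak{g}_1)-\chi_0(\mathfrak{g}_2)=0$ and drops out, while $\mathfrak{g}_1\not\equiv\mathfrak{g}_2\pmod{\mathfrak{m}}$ ensures at least one non‑principal $\chi$ enters with non‑zero coefficient; so everything reduces to the size of $S_{\chi}(N)$ for non‑principal $\chi$.

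For this I would pass to $\mathcal{G}_{\chi}(T):=\sum_{f\ \mathrm{monic}}\mathfrak{F}_{k}(f)\chi(f)T^{\deg f}$ and split each local Euler factor at $P$ into the range $a<k$ (where $\beta_{a}=a$ in \eqref{beta}) and the tail $a\ge k$ (where $\beta_{a}=1/a$). Pulling a geometric factor out of the first range yields
\[
\mathcal{G}_{\chi}(T)=L_{q}(s-1,\chi)\,L_{q}\!\bigl(k(s-1),\chi^{k}\bigr)^{-1}\,\widetilde{H}_{k}(s-1,\chi),\qquad T=q^{-s},
\]
where $\widetilde{H}_{k}(w,\chi)$ is an Euler product that converges absolutely and is non‑vanishing for $\Re w>\tfrac1k+\tfrac1{k^{2}}-1$, hence (since $\tfrac1k+\tfrac1{k^{2}}<1+\tfrac1{2k}$ for $k\ge2$) on a disc in the $T$‑plane strictly containing $|T|\le q^{-1-1/(2k)}$; this is exactly the Euler‑product bookkeeping already carried out in the proofs of Theorems~\ref{thm7} and~\ref{thm9}. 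Two points are decisive: since $\chi\ne\chi_0$ the factor $L_{q}(s-1,\chi)$ is a polynomial in $q^{-s}$ and contributes no poles; and the hypothesis $\chi^{k}\ne\chi_0$ for every non‑principal $\chi$ makes $\chi\mapsto\chi^{k}$ a bijection of the character group and makes $L_{q}(k(s-1),\chi^{k})$ a genuine non‑principal $L$‑function, so by the Riemann hypothesis for curves its finitely many zeros all satisfy $\Re\bigl(k(s-1)\bigr)=\tfrac12$, i.e.\ all poles of $\mathcal{G}_{\chi}$ lie on $|T|=q^{-1-1/(2k)}$ and there is no secondary main term. Reading off the coefficient of $T^{N}$ in $\mathcal{G}_{\chi}(T)/(1-T)$ by a residue (Perron‑type) computation, as in Theorem~\ref{thm9}, gives
\[
S_{\chi}(N)=q^{N(1+\frac1{2k})}\!\!\!\sum_{L_{q}(\frac12+i\gamma,\,\chi^{k})=0}\!\!\!c_{\chi,\gamma}\,\omega_{\chi,\gamma}^{\,N}\ +\ \BigOmk{q^{N(1+\frac1{2k}-\delta)}}
\]
for some $\delta>0$, with $|\omega_{\chi,\gamma}|=1$ and each $c_{\chi,\gamma}\ne0$ (the residue factor $L_{q}(s_{\gamma}-1,\chi)\widetilde{H}_{k}(s_{\gamma}-1,\chi)$ is non‑zero, being evaluated at $\Re(s_{\gamma}-1)=\tfrac1{2k}<\tfrac12$, away from the zeros of $L_{q}(\cdot,\chi)$ and from the boundary of convergence of $\widetilde{H}_{k}$).

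Substituting back, $D(N)=q^{N(1+\frac1{2k})}P(N)+\BigOmk{q^{N(1+\frac1{2k}-\delta)}}$ with $P(N)=\sum_{j}c_{j}\omega_{j}^{\,N}$ a finite real exponential sum, $|\omega_{j}|=1$. Haselgrove's condition $L_{q}(1/2,\chi)\ne0$ for all $\chi\bmod\mathfrak{m}$, applied to the characters $\chi^{k}$, says precisely that none of the zeros occurring above lies at $s=1/2$, equivalently that no $\omega_{j}$ equals $1$, so $P$ has mean value zero. Now I would invoke the Landau--Pringsheim principle: if $D(N)$ kept a fixed sign for all large $N$, then the power series $\sum_{N}D(N)T^{N}=\tfrac1{(1-T)\Phi(\mathfrak{m})}\sum_{\chi\ne\chi_0}(\overline{\chi(\mathfrak{g}_1)}-\overline{\chi(\mathfrak{g}_2)})\mathcal{G}_{\chi}(T)$ would, by Pringsheim's theorem, have a singularity at the positive real point of its circle of convergence; but that circle is $|T|=q^{-1-1/(2k)}$, and $\mathcal{G}_{\chi}$ can be singular at the \emph{real} point $T=q^{-1-1/(2k)}$ only if $L_{q}(k(s-1),\chi^{k})$ vanishes there, i.e.\ $L_{q}(1/2,\chi^{k})=0$, which Haselgrove's condition excludes. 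This contradiction forces infinitely many sign changes; equivalently, a finite exponential sum of mean zero that is not identically zero cannot be eventually of one sign.

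The crux — and the step I expect to be the main obstacle — is the non‑cancellation making the circle of convergence exactly $|T|=q^{-1-1/(2k)}$ (equivalently $P\not\equiv0$): a priori the poles of the various $\mathcal{G}_{\chi}$ on that circle might cancel in the sum. Here the bijectivity of $\chi\mapsto\chi^{k}$ helps: each character is $\chi^{k}$ for a unique $\chi$, so at a zero $\gamma$ of $L_{q}(\cdot,\chi_1^{k})$ for a fixed $\chi_1$ with $\overline{\chi_1(\mathfrak{g}_1)}\ne\overline{\chi_1(\mathfrak{g}_2)}$, the residue of $\sum_{\chi\ne\chi_0}(\cdots)\mathcal{G}_{\chi}$ receives the non‑zero contribution $\propto(\overline{\chi_1(\mathfrak{g}_1)}-\overline{\chi_1(\mathfrak{g}_2)})\,L_{q}(s_{\gamma}-1,\chi_1)\,\widetilde{H}_{k}(s_{\gamma}-1,\chi_1)$ from $\chi_1$, which another $\chi_2$ can cancel only if $L_{q}(\cdot,\chi_2^{k})$ shares the zero $\gamma$ — and distinct non‑principal Dirichlet $L$‑functions over $\mathbb{F}_{q}[X]$ have, for all but finitely many $\gamma$, disjoint zero sets, so some pole survives. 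One must also set aside the degenerate case in which $\chi_1^{k}$ has conductor of degree $\le1$ (then $L_{q}(\cdot,\chi_1^{k})$ is constant and contributes no oscillation at this level): for the statement to have content one takes $\mathfrak{m}$ large enough that some admissible $\chi$ has $\chi^{k}$ of conductor degree $\ge2$, after which the argument runs as above. Finally, the rigorous bound on the remainder $\BigOmk{q^{N(1+\frac1{2k}-\delta)}}$ is obtained by the same contour/circle shift used to prove Theorem~\ref{thm9}.
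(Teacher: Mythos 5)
Your proposal follows the same overall strategy as the paper's proof: reduce by orthogonality to twists $S_{\chi}(N)$, factor the generating Dirichlet series as $L_q(s-1,\chi)/L_q(ks-k,\chi^k)$ times an Euler product analytic past the critical line, and then derive a contradiction from a Landau/Pringsheim‑type singularity principle under the assumption that the difference has eventually constant sign. The paper implements this via Proposition~\ref{Landau} applied to the Mellin transform of $\mathbb{A}(N)=\mathfrak{S}_{k}(N;\mathfrak{m},\mathfrak{g}_1)-\mathfrak{S}_{k}(N;\mathfrak{m},\mathfrak{g}_2)\pm cq^{N(1+\frac{1}{2k}-\epsilon)}$, whereas you use Pringsheim's theorem for the power series $\sum_N D(N)T^N$; these are two formulations of the same principle, so this is not a genuinely different route. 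Your intermediate step — extracting an explicit exponential sum over zeros with coefficients $c_{\chi,\gamma}$ — is valid in the function‑field setting but is extra machinery the paper does not need: once the poles on $\Re(s)=1+\frac{1}{2k}$ are in hand, Proposition~\ref{Landau} finishes directly.

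One point you raise deserves comment. You flag the possible cancellation of poles among the different characters $\chi$ on the circle $|T|=q^{-1-1/(2k)}$, which is indeed a subtlety: the paper's proof only argues that zeros of the numerator $L_q(s-1,\chi)$ (lying on $\sigma=\tfrac32$) cannot cancel zeros of the denominator $L_q(ks-k,\chi^k)$ (on $\sigma=1+\tfrac1{2k}$), and does not discuss cancellation \emph{between} the terms for distinct $\chi$. However, your proposed resolution is itself not rigorous. The claim that ``distinct non‑principal Dirichlet $L$‑functions over $\mathbb{F}_q[X]$ have, for all but finitely many $\gamma$, disjoint zero sets'' is vacuous here: these $L$‑functions are polynomials in $q^{-s}$, with only finitely many zeros in a period, so ``all but finitely many'' gives nothing, and there is no general linear‑independence principle forcing the residues at a shared zero to be non‑cancelling. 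The observation that $\chi\mapsto\chi^k$ is a bijection (which does follow from the hypothesis $\chi^k\ne\chi_0$ for $\chi\ne\chi_0$) distributes the poles among genuinely distinct $L$‑functions but does not by itself prevent a shared zero. Your aside about the ``degenerate case'' of conductor degree $\le1$ is likewise not in the paper and would, if relevant, be a restriction on the theorem rather than a proof step. In short: same approach, a valid concern about cancellation that the paper also does not address, but no rigorous fix of it; the rest of the proposal matches the paper.
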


\begin{rem}
    Note that Theorems \ref{thm6}, and \ref{thm8} do not provide any information on the frequency of sign changes of the functions $\mathcal{S}_{k,\mathbb{K}}(x;\mathfrak{q},\mathfrak{a}_1)-\mathcal{S}_{k,\mathbb{K}}(x;\mathfrak{q},\mathfrak{a}_2)$ 
    and $\mathfrak{S}_{k}(N; \mathfrak{m},\mathfrak{g}_1)-\mathfrak{S}_{k}(N; \mathfrak{m},\mathfrak{g}_2)$, respectively. We use a result of Kaczorowski and Wiertelak \cite[Lemma 3.1]{KaczorowskiJ} to get some insights into the frequency of sign changes. Employing \cite[Lemma 3.1]{KaczorowskiJ} to the function
    \[\mathcal{A}(x)=\mathcal{S}_{k,\mathbb{K}}(x;\mathfrak{q},\mathfrak{a}_1)-\mathcal{S}_{k,\mathbb{K}}(x;\mathfrak{q},\mathfrak{a}_2)\pm cx^{1+\frac{1}{2k}-\epsilon}, \]
    we get a sequence $\{x_i\}_{i=1}^{[\log T]}$ in the interval $(1,T]$ of length $\log T$ such that $sgn \mathcal{A}(x_i)\ne sgn \mathcal{A}(x_{i+1})$ and $|\mathcal{A}(x_i)|>x_i^{1+\frac{1}{2k}-\epsilon}$. Hence, $\mathcal{A}(x)$ has at least $\gg\log T$ oscillations of size $x^{1+\frac{1}{2k}-\epsilon}$, in the interval $(1,T]$.
\end{rem}

Similarly, $\mathbb{A}(N)=\mathfrak{S}_{k}(N; \mathfrak{m},\mathfrak{g}_1)-\mathfrak{S}_{k}(N; \mathfrak{m},\mathfrak{g}_2)\pm cq^{N(1+\frac{1}{2k}-\epsilon)}$ demonstrate at least $\gg\log T$ oscillations of size $q^{N(1+\frac{1}{2k}-\epsilon)}$, in the interval $(1,T]$.


The structure of the article is as follows: We discuss some preliminary results in Section 2. In Sections $3$ and $4$, we investigate the asymptotic behavior of the average, the omega result, and the Chebyshev's bias phenomenon for the analog of the irrational factor function of order $k$ over number fields and function fields, respectively.

\subsection{Notation}
 We write $f(x)=\BigO{g(x)}$ or equivalently $f(x)\ll g(x)$ if there exists a constant $C$ such that $|f(x)|\leq Cg(x)$ for all $x$. And $f(x)=\Omega_{\pm}(g(x))$ implies that $\limsup{f(x)}/{g(x)}>0$ and $\liminf{{f(x)}/{g(x)}}<0$. $\zeta(s)$ and $L(s,\chi)$ denote the Riemann zeta function and the Dirichlet L-function modulo $q$, respectively. Moreover, $(a,b)=1$ means that $a$ and $b$ are coprime.


\subsection{Acknowledgements}
The author is grateful to Sneha Chaubey for suggesting this problem, for the generous discussions, and for insightful suggestions during the preparation of the article. The author also acknowledges the support from the UGC, Department of Higher Education, Government of India, under NTA Ref. no. 191620135578.

\section{Preliminaries}
In order to prove our main theorems, we require some preliminary results. Some of them can be obtained from previous works, while some of them require proof. We begin by deriving mean value estimates for $\zeta(s)/s$ and $L(s,\chi)/s$.


\begin{prop}\label{prop2}
    If $0<\sigma<1/2$ is a real number, then 
    \[\int_0^T\frac{|\zeta(\sigma+it)|}{|\sigma+it|}dt\ll T^{\frac{1}{2}-\sigma}\log T.  \]
  \begin{proof}
      We begin with the asymptotic formula $(13)$ from \cite{Chahal}:
      \[\int_0^T\frac{|\zeta(\sigma+it)|}{|\sigma+it|}dt\ll 1+\log T\max_{0\leq n\leq\lfloor\log T\rfloor}\frac{1}{2^n}\int_{0}^{2^n}|\zeta(\sigma+it)|dt.\numberthis\label{I10} \]
The functional equation for Riemann zeta function is given by (see \cite{Davenport}, p. 59)
\[\zeta(s)=Y(s)\zeta(1-s), \]
where $Y(s)=\pi^{s-1/2}\Gamma((1-s)/2)/\Gamma(s/2)$. By Stirling's formula, we have
$Y(\sigma+it)\asymp t^{\frac{1}{2}-\sigma}$. Thus, the above identity gives
\[\zeta(\sigma+it)=\BigO{t^{\frac{1}{2}-\sigma}\zeta(1-\sigma+it)}. \numberthis\label{I9}\]
Employing Cauchy-Schwarz inequality and using the mean value formula \cite[Theorem 1.1]{Ivic} with \eqref{I9}, we obtain
     \[\int_{0}^{2^n}|\zeta(\sigma+it)|dt\ll 2^{\frac{n}{2}+n(1-\sigma)}. \]
     Inserting the above estimate into \eqref{I10} gives the required result.
  \end{proof}  
\end{prop}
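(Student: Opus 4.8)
The plan is to peel the weight $1/|\sigma+it|$ off the integrand at the cost of a single factor $\log T$, thereby reducing the claim to a first-moment bound $\int_0^{2^n}|\zeta(\sigma+it)|\,dt \ll 2^{n(3/2-\sigma)}$ on dyadic ranges, and then to prove that first-moment bound by passing through the functional equation to a line to the right of the critical line, where the classical second moment of $\zeta$ is available.

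For the reduction step I would invoke the dyadic estimate from \cite{Chahal} (which itself comes from partial summation of $|\zeta(\sigma+it)|$ against the slowly varying weight $1/|\sigma+it|$ over the intervals $[2^{n-1},2^n]$):
\[\int_0^T\frac{|\zeta(\sigma+it)|}{|\sigma+it|}\,dt\ll 1+\log T\max_{0\le n\le\lfloor\log T\rfloor}\frac{1}{2^n}\int_0^{2^n}|\zeta(\sigma+it)|\,dt.\]
Hence it suffices to show $\frac{1}{2^n}\int_0^{2^n}|\zeta(\sigma+it)|\,dt\ll 2^{n(1/2-\sigma)}$, since then the maximum over $n\le\lfloor\log T\rfloor$ is $\ll T^{1/2-\sigma}$ and the extra $\log T$ gives the claimed bound.

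To estimate the first moment, since $\sigma<1/2$ I would use the functional equation $\zeta(s)=Y(s)\zeta(1-s)$ with $Y(s)=\pi^{s-1/2}\Gamma((1-s)/2)/\Gamma(s/2)$; Stirling's formula gives $|Y(\sigma+it)|\asymp t^{1/2-\sigma}$ for $t$ bounded away from $0$, so $|\zeta(\sigma+it)|\ll t^{1/2-\sigma}|\zeta(1-\sigma+it)|$ in that range, while the contribution of $t\le 1$ is $O(1)$ and is absorbed into the ``$1+$'' term. On $[1,2^n]$ I would bound $t^{1/2-\sigma}\le (2^n)^{1/2-\sigma}$ (legitimate because $1/2-\sigma>0$), then apply Cauchy--Schwarz together with the second-moment estimate $\int_0^{U}|\zeta(1-\sigma+it)|^2\,dt\ll_\sigma U$, valid since $1-\sigma>1/2$ (see \cite[Theorem 1.1]{Ivic}), to obtain
\[\int_0^{2^n}|\zeta(\sigma+it)|\,dt\ll 2^{n(1/2-\sigma)}(2^n)^{1/2}\Big(\int_0^{2^n}|\zeta(1-\sigma+it)|^2\,dt\Big)^{1/2}\ll 2^{n(1/2-\sigma)}\cdot 2^{n}=2^{n(3/2-\sigma)},\]
which is precisely the bound $\frac{1}{2^n}\int_0^{2^n}|\zeta(\sigma+it)|\,dt\ll 2^{n(1/2-\sigma)}$ required above.

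The argument is essentially a bookkeeping exercise once one has these two off-the-shelf inputs --- the dyadic reduction of \cite{Chahal} and the second-moment bound for $\zeta$ --- so there is no serious obstacle. The only points demanding a little care are that Stirling's asymptotic for $Y(s)$ is valid only for $|t|$ bounded away from $0$, so the range $t\le 1$ must be treated separately (where everything is uniformly bounded and the weight $1/|\sigma+it|$ is harmless), and that the power of $2^n$ contributed by the functional-equation weight must be tracked alongside the power from the second moment so that the exponent $1/2-\sigma$ comes out exactly.
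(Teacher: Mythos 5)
Your proof is correct and follows essentially the same route as the paper: the dyadic reduction from \cite{Chahal}, the functional equation with Stirling to pass to $\Re(s)=1-\sigma>1/2$, and Cauchy--Schwarz against the second-moment bound from \cite[Theorem 1.1]{Ivic}. The only cosmetic difference is in how Cauchy--Schwarz is applied: you first bound $t^{1/2-\sigma}$ by its supremum $(2^n)^{1/2-\sigma}$ on $[1,2^n]$ and then apply Cauchy--Schwarz to $\int|\zeta(1-\sigma+it)|\,dt$, whereas the paper keeps the $t^{1/2-\sigma}$ weight inside the Cauchy--Schwarz; both yield the same exponent $2^{n(3/2-\sigma)}$. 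Your explicit remark about handling $t\le 1$ separately (where Stirling's asymptotic does not apply) is a small but welcome clarification that the paper leaves implicit in its ``$1+\cdots$'' term.
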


\begin{prop}\label{prop3}
    If $0<\sigma<1/2$ is a real number, then 
    \[\int_0^T\frac{|L(\sigma+it, \chi)|}{|\sigma+it|}dt\ll_q T^{\frac{1}{2}-\sigma}\log T.  \]
\end{prop}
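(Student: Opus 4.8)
The plan is to imitate the proof of Proposition~\ref{prop2} line by line, replacing the Riemann zeta function by the Dirichlet $L$-function and letting every implied constant depend on $q$. First I would record the dyadic decomposition analogous to \eqref{I10}: partitioning $[0,T]$ into the $\BigO{\log T}$ ranges $[2^{n-1},2^{n}]$, on each of which $|\sigma+it|^{-1}\asymp 2^{-n}$, and treating $t\in[0,1]$ trivially (here $\sigma>0$ keeps $\sigma+it$ away from the pole of $L(s,\chi_0)$ at $s=1$, so this piece contributes only $O_q(1)$), one obtains
\[\int_0^T\frac{|L(\sigma+it,\chi)|}{|\sigma+it|}\,dt\ll 1+\log T\max_{0\le n\le\lfloor\log T\rfloor}\frac{1}{2^n}\int_0^{2^n}|L(\sigma+it,\chi)|\,dt,\]
exactly as in formula $(13)$ of \cite{Chahal}, whose derivation is insensitive to the particular Dirichlet series involved. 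Hence it suffices to show $\frac{1}{2^n}\int_0^{2^n}|L(\sigma+it,\chi)|\,dt\ll_q (2^n)^{\frac12-\sigma}$ for each $n\le\lfloor\log T\rfloor$, since then the right-hand side above is $\ll_q 1+T^{\frac12-\sigma}\log T\ll_q T^{\frac12-\sigma}\log T$, using $2^{n}\le 2^{\log T}=T^{\log 2}\le T$.

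Second, I would reduce to a primitive character and bring in the functional equation. If $\chi^{*}$ is the primitive character inducing $\chi$, then $L(s,\chi)=L(s,\chi^{*})\prod_{p\mid q}\bigl(1-\chi^{*}(p)p^{-s}\bigr)$, and on the line $\Re s=\sigma\in(0,\tfrac12)$ the finite product is bounded in modulus by $\prod_{p\mid q}(1+p^{-\sigma})\leq 2^{\omega(q)}=O_q(1)$, so $|L(\sigma+it,\chi)|\ll_q|L(\sigma+it,\chi^{*})|$ and we may assume $\chi$ primitive. For primitive $\chi$ the functional equation has the shape $L(s,\chi)=\varepsilon(\chi)\,(q/\pi)^{\frac12-s}\,\gamma(s)\,L(1-s,\bar\chi)$ with $|\varepsilon(\chi)|=1$ and $\gamma(s)$ a ratio of Gamma factors for which Stirling's formula gives $|\gamma(\sigma+it)|\asymp_\sigma t^{\frac12-\sigma}$ as $t\to\infty$; absorbing the constant $(q/\pi)^{\frac12-\sigma}$ we get
\[L(\sigma+it,\chi)=\BigO{t^{\frac12-\sigma}\,L(1-\sigma+it,\bar\chi)}\qquad(t\ge 1),\]
with implied constant depending on $q$ and $\sigma$, the exact analogue of \eqref{I9}. (The case $\chi=\chi_0$ may alternatively be deduced straight from Proposition~\ref{prop2} using $L(s,\chi_0)=\zeta(s)\prod_{p\mid q}(1-p^{-s})$.)

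Third, I would estimate the dyadic average by Cauchy--Schwarz and a mean-square bound off the critical line. Since $1-\sigma>\tfrac12$ is fixed, the standard second-moment estimate for Dirichlet $L$-functions on a vertical line to the right of $\Re s=\tfrac12$ (the analogue of \cite[Theorem 1.1]{Ivic}, with the same proof) gives $\int_0^{2^n}|L(1-\sigma+it,\bar\chi)|^{2}\,dt\ll_q 2^{n}$. As $t^{\frac12-\sigma}$ is increasing on $[1,2^n]$, the functional-equation bound yields $\int_0^{2^n}|L(\sigma+it,\chi)|\,dt\ll_q (2^n)^{\frac12-\sigma}\int_0^{2^n}|L(1-\sigma+it,\bar\chi)|\,dt+O_q(1)$, and by Cauchy--Schwarz together with the mean-square bound,
\[\int_0^{2^n}|L(1-\sigma+it,\bar\chi)|\,dt\le (2^n)^{\frac12}\Bigl(\int_0^{2^n}|L(1-\sigma+it,\bar\chi)|^{2}\,dt\Bigr)^{\frac12}\ll_q 2^{n}.\]
Combining these gives $\int_0^{2^n}|L(\sigma+it,\chi)|\,dt\ll_q 2^{n(\frac32-\sigma)}=2^{\frac n2+n(1-\sigma)}$, so $\frac{1}{2^n}\int_0^{2^n}|L(\sigma+it,\chi)|\,dt\ll_q 2^{n(\frac12-\sigma)}\le T^{\frac12-\sigma}$, which is precisely the bound needed to conclude via the dyadic decomposition of the first paragraph.

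Every step here is a mechanical transcription of the zeta case, so the only point that genuinely needs care is uniformity: because the statement only claims $\ll_q$, the conductor may be absorbed into the implied constant at each stage — in the passage to a primitive character, in the factor $(q/\pi)^{\frac12-s}$ from the functional equation, and in the $q$-dependent constant of the second-moment estimate — and no uniformity in $q$ is required. Consequently I expect the main (minor) obstacle to be purely bibliographic: pinning down the cleanest citation for the mean-square bound $\int_0^{T}|L(\sigma'+it,\chi)|^{2}\,dt\ll_q T$ for fixed $\sigma'\in(\tfrac12,1)$ in the same spirit in which \cite[Theorem 1.1]{Ivic} is invoked in the proof of Proposition~\ref{prop2}; all the remaining ingredients carry over from the Riemann zeta function without change.
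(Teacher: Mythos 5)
Your proposal is correct and follows essentially the same route as the paper, whose own ``proof'' of Proposition~\ref{prop3} is a one-line remark instructing the reader to repeat the argument of Proposition~\ref{prop2} with the functional equation for $L(s,\chi)$ and the second-moment bound for Dirichlet $L$-series (the paper cites Montgomery's \emph{Topics in Multiplicative Number Theory}, Chap.~6, where you appeal to the analogue of \cite[Theorem 1.1]{Ivic}). Your write-up simply supplies the details the paper elides — the dyadic splitting, the reduction to a primitive character, the handling of $\chi_0$ via Proposition~\ref{prop2}, and the Cauchy--Schwarz step — all of which are the intended steps, so there is nothing to flag beyond the bibliographic choice of source for the mean-square estimate, which you already noted.
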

\begin{proof}
    The proof follows along the same lines as Proposition \ref{prop2}, using the functional equation for the Dirichlet $L$-function modulo $q$ and the mean value formula for Dirichlet series (see \cite{MR337847}, Chap. 6, p. 50).
    \end{proof}

Next, we state a classical theorem of Landau concerning the singularities of the Mellin transform of a non-negative function.

\begin{prop}\cite{LandauE}\label{Landau}
    Let $A(x)$ be a real-valued function in one variable, and $A(x)$ does not change its sign for $x>x_0$, where $x_0$ is a sufficiently large real number. Suppose also for some real number $\beta<\gamma$, that Mellin transform $g(s):=\int_1^{\infty}A(x)x^{-s-1}dx$ is analytic for $\mathcal{R}(s)>\gamma$, can be analytically continued to the real segment $(\beta,\gamma]$. Then $g(s)$ represents an analytic function in the half plane $\mathcal{R}(s)>\beta$.
\end{prop}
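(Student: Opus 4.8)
The plan is to run the classical Landau argument via the abscissa of convergence of the Mellin integral. Since $A$ does not change sign for $x>x_0$, after possibly replacing $A$ by $-A$ (which only changes $g$ by a sign and does not affect analyticity) we may assume $A(x)\geq 0$ for $x>x_0$; subtracting from $g$ the entire function $\int_1^{x_0}A(x)x^{-s-1}\,dx$ then reduces us to the case $A(x)\geq 0$ for all $x\geq 1$. Let $\rho:=\inf\{\sigma\in\R:\int_1^\infty A(x)x^{-\sigma-1}\,dx<\infty\}$ be the abscissa of convergence. The hypothesis that $g$ is given by the integral and analytic on $\Re(s)>\gamma$ forces $\rho\leq\gamma$, and standard facts about Mellin transforms of non-negative functions show that $\int_1^\infty A(x)x^{-s-1}\,dx$ converges absolutely and defines an analytic function on $\Re(s)>\rho$. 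Hence it suffices to prove $\rho\leq\beta$: then the integral itself is analytic on $\Re(s)>\beta$, agrees with $g$ on $\Re(s)>\gamma$, and so equals the analytic continuation of $g$ there by uniqueness.

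Suppose, for contradiction, that $\rho>\beta$. Then $\rho\in(\beta,\gamma]$, so by hypothesis $g$ extends analytically to an open neighbourhood $N$ of $\rho$, and therefore $g$ is analytic on $\Omega:=\{\Re(s)>\rho\}\cup N$. Choose $\delta>0$ small enough that the disk $D(\rho,2\delta)\subseteq N$, and set $b:=\rho+\tfrac{\delta}{2}$. An elementary estimate gives $D(b,\delta)\subseteq\Omega$: any $s$ with $|s-b|<\delta$ either satisfies $\Re(s)>\rho$, or else $|s-\rho|\leq|s-b|+\tfrac{\delta}{2}<\tfrac{3\delta}{2}<2\delta$, so $s\in D(\rho,2\delta)$. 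Consequently the Taylor series of $g$ about $b$ has radius of convergence at least $\delta$. Since $b>\rho$, we may differentiate under the integral sign in the region of absolute convergence to get $g^{(n)}(b)=(-1)^n a_n$, where $a_n:=\int_1^\infty A(x)(\log x)^n x^{-b-1}\,dx\geq 0$.

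Now evaluate the Taylor series at the real point $c:=\rho-\tfrac{\delta}{4}$, which satisfies $|c-b|=\tfrac{3\delta}{4}<\delta$ and $c<\rho$. We obtain $g(c)=\sum_{n\geq 0}\tfrac{(b-c)^n}{n!}a_n$, a convergent series of non-negative terms (here $b-c=\tfrac{3\delta}{4}>0$, and $A\geq 0$, $\log x\geq 0$ on $[1,\infty)$). By Tonelli's theorem we may interchange summation and integration, obtaining
\[
g(c)=\int_1^\infty A(x)x^{-b-1}\sum_{n\geq 0}\frac{\big((b-c)\log x\big)^n}{n!}\,dx=\int_1^\infty A(x)x^{-b-1}\,x^{\,b-c}\,dx=\int_1^\infty A(x)x^{-c-1}\,dx<\infty .
\]
Thus the Mellin integral converges at the real point $c<\rho$, contradicting the definition of $\rho$. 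Therefore $\rho\leq\beta$, which completes the proof.

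The main obstacle, and the only point at which the sign hypothesis is used, is the interchange of summation and integration: without non-negativity of $A$ one cannot apply Tonelli's theorem to pass from the Taylor expansion of $g$ at $b$ back to a genuine convergence statement for the defining integral at a point to the left of $\rho$. The remaining ingredients — realizing $D(b,\delta)\subseteq\Omega$, differentiating under the integral sign in the half-plane of absolute convergence, and deducing analyticity on a half-plane from convergence at a single real point — are routine.
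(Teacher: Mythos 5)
Your proof is correct: it is the standard Landau abscissa-of-convergence argument (reduce to $A\geq 0$ on all of $[1,\infty)$ by splitting off the entire piece over $[1,x_0]$, Taylor-expand at a real point $b$ just to the right of the abscissa $\rho$, and use non-negativity of the coefficients together with Tonelli to push convergence of the Mellin integral to a point $c<\rho$, contradicting minimality unless $\rho\leq\beta$). The paper offers no proof of this proposition --- it is stated with a citation to Landau --- and your argument is exactly the classical proof behind that citation, so the two approaches coincide.
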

We also use the following result, which is an analogue of Landau's theorem \cite[Theorem 1.7]{Vaughan}, concerning Dirichlet series with non-negative coefficients.
\begin{prop}\cite[Theorem 15.1]{Vaughan}\label{omega}
    Suppose that $A(x)$ is a bounded Riemann integrable function in any finite interval $1\leq x\leq X$, and that $A(x)\geq 0$ for all $x>X_0$. Let $\sigma_c$ denote the infimum of those $\sigma$ for which $\int_{X_0}^{\infty}A(x)x^{-\sigma}dx<\infty$. Then the function
    \[F(s)=\int_{1}^{\infty}A(x)x^{-s}dx \]
    is analytic in the half plane $\sigma>\sigma_c$, but not at the point $s=\sigma_c$.
\end{prop}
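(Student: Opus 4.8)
The statement is the classical theorem of Landau in its integral (Mellin) form, and the plan is to prove it in two stages: first the routine analyticity of $F$ on the half-plane $\Re(s)>\sigma_c$, and then the assertion that $s=\sigma_c$ is genuinely a singular point, which is the substantive part. We may clearly assume $X_0\geq 1$.

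For the first stage, fix a real $\sigma_0>\sigma_c$. Since $A(x)\geq 0$ for $x>X_0$ and $x\geq 1$ there, convergence of $\int_{X_0}^{\infty}A(x)x^{-\sigma}\,dx$ at one point persists for all larger $\sigma$, so $\int_{X_0}^{\infty}A(x)x^{-\sigma}\,dx<\infty$ for every $\sigma\geq\sigma_0$; combined with $\int_1^{X_0}|A(x)|x^{-\sigma}\,dx<\infty$ (here $A$ is bounded), the integral defining $F(s)$ converges absolutely and locally uniformly on $\Re(s)>\sigma_c$, because $|A(x)x^{-s}|=|A(x)|x^{-\Re(s)}$. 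Each truncation $\int_1^{R}A(x)x^{-s}\,dx$ is an entire function of $s$, and $F$ is their locally uniform limit, hence analytic on $\Re(s)>\sigma_c$; differentiating under the integral sign (justified by the same domination) gives $F^{(n)}(\sigma_0)=\int_1^{\infty}A(x)(-\log x)^n x^{-\sigma_0}\,dx$ for all $n\geq 0$.

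For the second stage, suppose toward a contradiction that $F$ continues analytically to an open set containing both $\{\Re(s)>\sigma_c\}$ and a disc about $\sigma_c$. Then an elementary planar estimate shows $F$ is analytic on a disc $\{\,|s-\sigma_0|<R\,\}$ for some real $\sigma_0>\sigma_c$ and some $R>\sigma_0-\sigma_c$; in particular one may pick a real $\sigma_1$ with $\sigma_0-R<\sigma_1<\sigma_c$, and the Taylor series of $F$ about $\sigma_0$ converges at $\sigma_1$. Substituting the derivative formula and separating the bounded piece on $[1,X_0]$,
\[F(\sigma_1)=\sum_{n\geq 0}\frac{(\sigma_0-\sigma_1)^n}{n!}\int_1^{X_0}A(x)(\log x)^n x^{-\sigma_0}\,dx+\sum_{n\geq 0}\frac{(\sigma_0-\sigma_1)^n}{n!}\int_{X_0}^{\infty}A(x)(\log x)^n x^{-\sigma_0}\,dx,\]
where the first double series converges absolutely (as $A$ is bounded on $[1,X_0]$) to a finite value. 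In the second double series every term is non-negative because $\sigma_0-\sigma_1>0$, $\log x\geq 0$ and $A(x)\geq 0$ there, so Tonelli's theorem allows interchanging summation and integration:
\[\sum_{n\geq 0}\frac{(\sigma_0-\sigma_1)^n}{n!}\int_{X_0}^{\infty}A(x)(\log x)^n x^{-\sigma_0}\,dx=\int_{X_0}^{\infty}A(x)x^{-\sigma_0}e^{(\sigma_0-\sigma_1)\log x}\,dx=\int_{X_0}^{\infty}A(x)x^{-\sigma_1}\,dx.\]
Since the left-hand side equals the finite number $F(\sigma_1)$ minus a finite quantity, $\int_{X_0}^{\infty}A(x)x^{-\sigma_1}\,dx<\infty$ with $\sigma_1<\sigma_c$, contradicting the definition of $\sigma_c$. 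Hence $F$ cannot be analytic at $s=\sigma_c$.

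The main obstacle is not any single computation but the coordination in the last stage: one must first engineer a disc of convergence of the Taylor series about $\sigma_0$ that reaches strictly to the left of $\sigma_c$, and then legitimately swap the order of summation and integration. The point that makes the swap valid is precisely the eventual non-negativity of $A$, so that Tonelli applies to the tail $x>X_0$, together with the boundedness of $A$ on $[1,X_0]$, so that the remaining piece is harmless — which is why both hypotheses on $A$ are genuinely used.
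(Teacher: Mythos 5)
Your proof is correct. The paper itself does not prove this proposition; it simply cites it as Theorem 15.1 of Montgomery and Vaughan's \emph{Multiplicative Number Theory I}, so there is no in-paper argument to compare against. Your two-stage argument is the standard Landau-type proof found in that reference and elsewhere: analyticity on $\Re(s)>\sigma_c$ by absolute, locally uniform convergence (splitting off the bounded piece on $[1,X_0]$, and using eventual non-negativity plus monotonicity of $x^{-\sigma}$ in $\sigma$ for $x\geq 1$ on the tail), followed by the familiar contradiction: if $F$ were analytic at $\sigma_c$, then by expanding in a Taylor series about a real $\sigma_0>\sigma_c$ whose disc of convergence reaches left of $\sigma_c$, evaluating at a real $\sigma_1<\sigma_c$, and interchanging sum and integral on the tail via Tonelli (where the integrand is non-negative), one deduces $\int_{X_0}^{\infty}A(x)x^{-\sigma_1}\,dx<\infty$, contradicting the definition of $\sigma_c$. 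The one point worth being explicit about — and you handled it correctly — is the geometric step showing that analyticity on $\{\Re(s)>\sigma_c\}\cup\{|s-\sigma_c|<\delta\}$ forces the radius of convergence of the Taylor series at $\sigma_0=\sigma_c+\delta/2$ to exceed $\sigma_0-\sigma_c$; your estimate (distance to the boundary is $\geq\delta\sqrt{5}/2>\delta/2$) does the job, and your split of $\int_1^\infty=\int_1^{X_0}+\int_{X_0}^\infty$ is precisely what is needed so that Tonelli applies to the half with non-negative integrand while the remaining finite-interval piece is controlled by boundedness. No gaps.
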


\begin{prop}\cite[Lemma 2.8]{Roy}\label{Roy}
 Let $0<\lambda_1<\ldots<\lambda_n\to\infty$ be any sequence of real numbers, and let $\{a_n\}$ be a sequence of complex numbers. Let the Dirichlet series $F(s)=\sum_{n=1}^{\infty}\frac{a_n}{\lambda_n^s}$ be absolutely convergent for some $\Re(\sigma)>\sigma_a$. If $\sigma_0>\max(0,\sigma_a)$ and $x>0$, then
 \begin{center}
     $\sum_{\lambda_n\leq x}^{\prime}a_n=\frac{1}{2\pi i}\int_{\sigma_0-iT}^{\sigma_0+iT}F(s)\frac{x^s}{s}ds+R,$
 \end{center}
 where $\sum^{\prime}$ signifies that the coefficient of $a_n$ is $\frac{1}{2}$ when $x \in \mathbb{N}$ and
 \small{\[R\ll \sum_{\substack{\frac{x}{2}<\lambda_n<2x\\n\ne x}}|a_n|\min{\left(1, \frac{x}{T|x-\lambda_n|}\right)}+\frac{4^{\sigma_0}+x^{\sigma_0}}{T}\sum_{n=1}^{\infty}\frac{|a_n|}{\lambda_n^{\sigma_0}}.\]}
\end{prop}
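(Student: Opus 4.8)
The plan is to deduce the formula from the classical truncated Perron kernel estimate together with the absolute convergence of $F$ on the line $\Re s=\sigma_0$. First I recall the kernel estimate (see, e.g., \cite{Davenport}): for real $y>0$ and $T>0$,
\[
J(y):=\frac{1}{2\pi i}\int_{\sigma_0-iT}^{\sigma_0+iT}\frac{y^{s}}{s}\,ds
=\delta(y)+\BigO{y^{\sigma_0}\min\!\left(1,\frac{1}{T\,|\log y|}\right)}\qquad(y\ne 1),
\]
where $\delta(y)=1$ if $y>1$ and $\delta(y)=0$ if $0<y<1$, while $J(1)=\tfrac12+\BigO{\sigma_0/T}$; this follows by completing the vertical segment to a rectangle — extended leftward when $y>1$, so that the simple pole of $y^{s}/s$ at $s=0$ contributes its residue $1$, and rightward when $0<y<1$ — and estimating the horizontal sides by $\ll y^{\sigma_0}/(T|\log y|)$, together with the complementary bound $J(y)-\delta(y)\ll 1$ when $T|\log y|$ is small (the value $J(1)=\tfrac1\pi\arctan(T/\sigma_0)$ handling $y=1$).

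Since $F(s)=\sum_{n}a_n\lambda_n^{-s}$ converges absolutely on $\Re s=\sigma_0>\max(0,\sigma_a)$, on the finite segment $[\sigma_0-iT,\sigma_0+iT]$ the integrand $F(s)\,x^{s}/s$ is bounded in modulus by $\big(\sum_n|a_n|\lambda_n^{-\sigma_0}\big)\,x^{\sigma_0}/|s|$, which is integrable; hence termwise integration is legitimate and
\[
\frac{1}{2\pi i}\int_{\sigma_0-iT}^{\sigma_0+iT}F(s)\frac{x^{s}}{s}\,ds
=\sum_{n=1}^{\infty}a_n\,J\!\left(\frac{x}{\lambda_n}\right).
\]
Subtracting $\sum_{\lambda_n\le x}^{\prime}a_n$ — whose $n$-th summand is precisely $a_n$ times the main term of $J(x/\lambda_n)$, namely $\tfrac12$ when $\lambda_n=x$, in accordance with the $\sum'$ convention — leaves a remainder $R$ with
\[
|R|\le\sum_{n=1}^{\infty}|a_n|\left|J\!\left(\frac{x}{\lambda_n}\right)-\delta\!\left(\frac{x}{\lambda_n}\right)\right|,
\]
where $\delta(1)$ is read as $\tfrac12$.

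It remains to bound this sum by splitting on the size of $\lambda_n$ relative to $x$. For $x/2<\lambda_n<2x$ with $\lambda_n\ne x$ one has $x/\lambda_n\in(1/2,2)$, so $(x/\lambda_n)^{\sigma_0}\ll_{\sigma_0}1$, and since $|\log(1+t)|\asymp|t|$ uniformly for $t$ in a fixed interval bounded away from $-1$, also $|\log(x/\lambda_n)|\asymp|x-\lambda_n|/x$; the kernel estimate then contributes
\[
\ll\sum_{\substack{x/2<\lambda_n<2x\\ n\ne x}}|a_n|\,\min\!\left(1,\frac{x}{T\,|x-\lambda_n|}\right),
\]
which is the first term of $R$. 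For $\lambda_n\le x/2$ or $\lambda_n\ge 2x$ (and for $\lambda_n=x$, if it occurs) we have $|\log(x/\lambda_n)|\ge\log 2$, so $\min\!\big(1,(T|\log(x/\lambda_n)|)^{-1}\big)\ll 1/T$ and $|J(x/\lambda_n)-\delta(x/\lambda_n)|\ll(x/\lambda_n)^{\sigma_0}/T$ (respectively $\ll\sigma_0/T$ for $\lambda_n=x$); summing these and absorbing the constants gives
\[
\ll\frac{4^{\sigma_0}+x^{\sigma_0}}{T}\sum_{n=1}^{\infty}\frac{|a_n|}{\lambda_n^{\sigma_0}},
\]
the second term of $R$. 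Adding the two ranges yields the claimed bound.

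The one genuinely delicate point is the bookkeeping in this last step: the uniform comparison $|\log(x/\lambda_n)|\asymp|x-\lambda_n|/x$ on $x/2<\lambda_n<2x$, and the careful tracking of the constants $(x/\lambda_n)^{\sigma_0}$ and of the $\lambda_n=x$ contribution in the complementary range, so that everything fits into the stated shape $\tfrac{4^{\sigma_0}+x^{\sigma_0}}{T}\sum_n|a_n|\lambda_n^{-\sigma_0}$; the rest is the standard effective Perron argument, and for the exact constants one may consult \cite{Roy}.
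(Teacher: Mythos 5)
The paper does not prove this proposition: it is quoted verbatim from \cite[Lemma 2.8]{Roy} with no argument supplied, so there is no in-paper proof to compare your sketch against. What you have written is the canonical effective-Perron derivation — truncated kernel estimate for $\frac{1}{2\pi i}\int_{\sigma_0-iT}^{\sigma_0+iT}\frac{y^{s}}{s}\,ds$, termwise integration justified by absolute convergence on $\Re s=\sigma_0$, and a dyadic split of the tail at $x/2$ and $2x$ — which is surely the route taken in \cite{Roy} as well, and it is conceptually correct.

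The one step you gloss, and to your credit explicitly flag at the end, is the disposal of the factor $(x/\lambda_n)^{\sigma_0}$ coming out of the kernel bound. In the near range $x/2<\lambda_n<2x$ this factor can be as large as $2^{\sigma_0}$, and you write $(x/\lambda_n)^{\sigma_0}\ll_{\sigma_0}1$, producing a first error term with an implicit constant depending on $\sigma_0$; but the stated bound has a $\sigma_0$-free first term, with all the $\sigma_0$-dependence pushed into the explicit $4^{\sigma_0}+x^{\sigma_0}$ of the second term. Making that rearrangement rigorous (e.g. by treating $\lambda_n<x$ and $\lambda_n>x$ asymmetrically, and carefully trading $(x/\lambda_n)^{\sigma_0}-1$ against the $1/T$ present in the far-range contribution) is exactly where the real work of the proof sits, and is what the explicit constant encodes. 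So the sketch is sound as a sketch, but be aware that the final paragraph is not a throwaway remark: it is the substance of the lemma, and is why one cites \cite{Roy} rather than reproving it in a footnote.
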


 We now state an ingredient that will play the same role in deriving the asymptotic formula for the average of the irrational factor function over function fields as that played by Perron's formula in the classical setting.
\begin{prop}\label{prop7}
  Let $a(f)$ be an arithmetic function on $\mathbb{F}_q[X]$ and let $F(s)=\sum_{\substack{f\in\mathbb{F}_q[X] \\\ \text{monic}}}\frac{a(f)}{|f|^s}$ be a Dirichlet series with abscissa of convergence $\sigma_a$. If $\alpha>\max(0, \sigma_a),\ T\geq 1$ and $N\geq 1$, we have
  \begin{center}
      $\sum^{\prime}_{\deg(f)\leq N}a(f)=\frac{1}{2\pi i}\int_{\alpha-iT}^{\alpha+iT}F(s)\frac{q^{Ns}}{s}ds+\BigO{\frac{q^{\sigma N}}{T}\sum_{f}\frac{|a(f)|}{|f|^{\sigma}|\log(q^N/|f|)}}.$
  \end{center}
  
\end{prop}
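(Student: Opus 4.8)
The plan is to prove a function field analogue of the truncated Perron formula (Proposition \ref{prop7}), which expresses a partial sum of an arithmetic function on $\mathbb{F}_q[X]$ as a truncated contour integral of the associated Dirichlet series, plus an explicit error. First I would set up the ``discreteness'' of the problem: since every monic polynomial $f$ has $|f|=q^{\deg f}$, the Dirichlet series $F(s)=\sum_f a(f)|f|^{-s}$ is really a Dirichlet series supported on the multiplicative semigroup $\{q^n : n\geq 0\}$, so writing $b(n)=\sum_{\deg f=n}a(f)$ we have $F(s)=\sum_{n\geq 0}b(n)q^{-ns}$, a power series in the variable $u=q^{-s}$. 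The target sum is $\sum'_{\deg f\leq N}a(f)=\sum'_{n\leq N}b(n)$, and the claimed formula is exactly the classical truncated Perron formula applied to this Dirichlet series with $\lambda_n=q^n$. So the cleanest route is to invoke Proposition \ref{Roy} (the truncated Perron formula of Roy) with the sequence $\lambda_n=q^n$ and coefficients $b(n)$, and then simplify the resulting error term using the special arithmetic of powers of $q$.

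The key steps, in order: (i) Apply Proposition \ref{Roy} with $\lambda_n=q^n$, coefficient $b(n)=\sum_{\deg f=n}a(f)$, $\sigma_0=\alpha$, and $x=q^N$ (which lies in the value set of the $\lambda_n$, so the primed summation convention matches). This immediately gives
\[
\sum^{\prime}_{\deg(f)\leq N}a(f)=\frac{1}{2\pi i}\int_{\alpha-iT}^{\alpha+iT}F(s)\frac{q^{Ns}}{s}\,ds+R,
\]
with
\[
R\ll \sum_{\substack{q^{N}/2<q^{n}<2q^{N}\\ n\ne N}}|b(n)|\min\!\left(1,\frac{q^{N}}{T\,|q^{N}-q^{n}|}\right)+\frac{4^{\alpha}+q^{N\alpha}}{T}\sum_{n}\frac{|b(n)|}{q^{n\alpha}}.
\]
(ii) Analyze the first sum in $R$: the condition $q^N/2<q^n<2q^N$ with $q\geq 2$ and $n\neq N$ is empty unless $q=2$, and even when $q=2$ it forces $|q^N-q^n|\geq q^{N-1}$, so each term is $\ll |b(n)|\,q^{N}/(T\,q^{N-1})=q|b(n)|/T$; absorbing this into the second term and noting $\sum_n |b(n)| q^{-n\alpha}\leq \sum_f |a(f)| |f|^{-\alpha}$ shows the whole first sum is $\ll_q \frac{q^{N\alpha}}{T}\sum_f \frac{|a(f)|}{|f|^{\alpha}}$. (iii) Rewrite the second term of $R$ in the shape stated in the proposition. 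Since $|\log(q^N/|f|)|=|\log q|\cdot|N-\deg f|\geq \log q$ whenever $\deg f\neq N$, and the terms with $\deg f=N$ must be handled separately, the natural move is to split $F(s)q^{Ns}/s$ integral differently; alternatively one interprets the stated error $\frac{q^{\sigma N}}{T}\sum_f \frac{|a(f)|}{|f|^\sigma |\log(q^N/|f|)|}$ (with $\sigma=\alpha$) as the function-field replacement of the classical Perron error $\frac{x^\alpha}{T}\sum_f \frac{|a(f)|}{|f|^\alpha|\log(x/|f|)|}$, and verifies that Roy's error bound dominates it up to the constant $\log q$ coming from the minimal spacing $|\log(q^N/|f|)|\geq \log q$.

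I expect the main obstacle to be the bookkeeping around polynomials of degree exactly $N$ (equivalently $|f|=q^N=x$), since for those the factor $1/|\log(q^N/|f|)|$ in the claimed error is singular and the classical derivation must instead use the $\frac12$-weight convention and a direct estimate. The honest fix is to isolate the block $\deg f=N$: the contribution of $\sum_{\deg f=N}a(f)$ to the contour integral over $[\alpha-iT,\alpha+iT]$ of $F(s)q^{Ns}/s$ is $\frac{1}{2\pi i}\int_{\alpha-iT}^{\alpha+iT}\big(\sum_{\deg f=N}a(f)\big)\frac{ds}{s}=\big(\sum_{\deg f=N}a(f)\big)\big(\tfrac12+O(1/T)\big)$ by the standard evaluation of the truncated integral of $q^{0\cdot s}/s$, which accounts exactly for the primed weight and a term that is absorbed into $R$ (note $q^{N\alpha}\sum_{\deg f=N}|a(f)||f|^{-\alpha}$-type quantities are bounded by the stated error after trivially enlarging $|\log(q^N/|f|)|$ to $1$). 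Apart from this one case the proof is a mechanical transcription of Proposition \ref{Roy}, so once the degree-$N$ terms are dispatched the remaining estimates are routine and the constants depend only on $q$.
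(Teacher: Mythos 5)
Your proposal has a genuine gap, and it lies precisely where you wave your hands in step (iii). You apply Proposition~\ref{Roy} with $\lambda_n=q^n$, correctly observe that the ``close to $x$'' sum in Roy's error is empty (in fact it is empty for every $q\geq 2$, not just $q\geq 3$, since the inequalities are strict), and are left with the bound
\[
R\ll \frac{q^{N\alpha}}{T}\sum_{f}\frac{|a(f)|}{|f|^{\alpha}}.
\]
But the error claimed in the proposition is
\[
\frac{q^{\alpha N}}{T}\sum_{f}\frac{|a(f)|}{|f|^{\alpha}\,|\log(q^{N}/|f|)|},
\]
which carries a factor $|\log(q^{N}/|f|)|^{-1}=(|N-\deg f|\log q)^{-1}$ that decays as $\deg f$ moves away from $N$. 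For polynomials with $\deg f$ far from $N$ this stated error is much \emph{smaller} than what Roy gives; the two quantities differ by an unbounded factor. You write that one ``verifies that Roy's error bound dominates it'' --- this is true, but it is exactly the wrong direction: knowing that Roy's $R$ is bounded by the \emph{larger} quantity does not let you conclude $R\ll(\text{stated error})$. Roy's Proposition \ref{Roy} therefore proves only the weaker form of Perron without the logarithm in the denominator, not the statement you were asked to prove.

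To actually obtain the stated error one must carry out the Perron argument directly, as in the reference the paper cites (Tenenbaum, Ch.~II.2): expand $F(s)q^{Ns}/s$ using absolute convergence, integrate term by term, and use the sharper estimate
\[
\frac{1}{2\pi i}\int_{\alpha-iT}^{\alpha+iT}\frac{y^{s}}{s}\,ds
=\delta(y)+O\!\left(\frac{y^{\alpha}}{T\,|\log y|}\right)\qquad (y\neq 1),
\]
which is where the $|\log(q^{N}/|f|)|^{-1}$ decay comes from, together with the direct evaluation $\frac{1}{2\pi i}\int_{\alpha-iT}^{\alpha+iT}\frac{ds}{s}=\frac12+O(1/T)$ for the block $\deg f=N$ (which you do handle correctly and which produces the primed weight). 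Your overall framing --- Dirichlet series in $u=q^{-s}$, $b(n)=\sum_{\deg f=n}a(f)$, $\lambda_n=q^n$, $x=q^N$, isolating the degree-$N$ block --- is the right scaffolding, but the core estimate has to be the Tenenbaum-style log-weighted one, not Roy's. One further remark: as literally written the stated error is infinite whenever some $a(f)\neq 0$ with $\deg f=N$ (the denominator $|\log(q^N/|f|)|$ vanishes there), so one should interpret the sum as excluding $\deg f=N$; this is the same convention Tenenbaum uses.
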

\begin{proof}
    The proof follows from (\cite{Tenenbaum}, p. 218).
\end{proof}

We next recall the Phragm\'{e}n-Lindel\"{o}f principle.
\begin{thm}\cite[Theorem 5.53]{Iwaniec}\label{Phragmen}
Let $f(\sigma+it)$ be analytic in the strip $a\leq\sigma\leq b$ for some real numbers $a<b$, with $f(\sigma+it)\ll \exp{(\epsilon|t|)}$. If $|f(a+it)|\ll |t|^{c_1}$ and $|f(b+it)|\ll |t|^{c_2}$, then
\[|f(\sigma+it)|\ll |t|^{c(\sigma)}, \]
uniformly in $a\leq\sigma\leq b$, where $c(\sigma)$ is linear in $\sigma$ with $c(a)=c_1$ and $c(b)=c_2$.
\end{thm}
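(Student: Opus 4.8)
The plan is to deduce the bound from the \emph{maximum modulus principle} applied to $\log|f|$ on the strip $a\leq\sigma\leq b$. The two complications are that the hypotheses control $f$ on the two edges only up to a $|t|$-dependent factor, so the Hadamard three-lines theorem does not apply verbatim, and that the strip is unbounded while $f$ is only of sub-exponential growth in $t$.

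First I would introduce a harmonic comparison function. Write $c(\sigma)=\frac{b-\sigma}{b-a}c_1+\frac{\sigma-a}{b-a}c_2$ for the asserted linear profile, and let $u$ be the harmonic function on the open strip with boundary values $c_1\log(2+|t|)$ on $\Re s=a$ and $c_2\log(2+|t|)$ on $\Re s=b$, realized as a Poisson integral against the strip's Poisson kernel, which decays like $e^{-\pi|t-t'|/(b-a)}$. Since $\log(2+|t|)$ is slowly varying, splitting that integral at $|t-t'|\asymp|t|/2$ gives $u(\sigma+it)=c(\sigma)\log(2+|t|)+O_{a,b,c_1,c_2}(1)$ for $|t|\geq1$; in particular $u=O(\log|t|)$.

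Next I would apply the maximum principle to $v(s):=\log|f(s)|-u(s)$, which is subharmonic on the strip (the value $-\infty$ at zeros of $f$ is harmless). On each edge the hypotheses give $v\leq O(1)$. To handle the unbounded strip, fix $m=(a+b)/2$ and set $v_\delta(s)=v(s)+\delta\,\Re\!\big((s-m)^2\big)=v(s)+\delta\big((\sigma-m)^2-t^2\big)$ for $\delta>0$; using $f\ll_\epsilon\exp(\epsilon|t|)$ (say with $\epsilon=1$) and $u=O(\log|t|)$, one has $v_\delta(\sigma+it)\to-\infty$ as $|t|\to\infty$, uniformly in $a\leq\sigma\leq b$. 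Hence on the rectangle $a\leq\sigma\leq b,\ |t|\leq T$ with $T$ large, $v_\delta\leq O(1)+\delta((b-a)/2)^2$ on the whole boundary, so the maximum principle for subharmonic functions gives the same bound inside; letting $T\to\infty$ and then $\delta\to0^+$ yields $v(s)\leq O(1)$ throughout. Combining with the estimate for $u$, $\log|f(\sigma+it)|\leq c(\sigma)\log(2+|t|)+O(1)$, i.e.\ $|f(\sigma+it)|\ll_{a,b,c_1,c_2}|t|^{c(\sigma)}$ for $|t|\geq1$, uniformly in $\sigma$; the range $|t|\leq1$ is immediate from analyticity of $f$ on a neighbourhood of $[a,b]\times[-1,1]$.

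The step I expect to be the main obstacle is the slowly-varying estimate $u(\sigma+it)=c(\sigma)\log(2+|t|)+O(1)$, which is what pins down the \emph{exact} linear exponent $c(\sigma)$ rather than just its qualitative shape. An alternative I would mention is to produce an explicit non-vanishing analytic $\Phi$ with $|\Phi(\sigma+it)|\asymp(2+|t|)^{c(\sigma)}$ of polynomial growth --- trivially $(s+\kappa)^{-c}$ with a large real constant $\kappa$ when $c_1=c_2=c$, and $e^{h}$ with $\Re h=u$ in general --- and apply the three-lines theorem to $f/\Phi$; this repackages the same computation.
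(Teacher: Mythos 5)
The paper does not prove Theorem \ref{Phragmen}: it is quoted from Iwaniec and Kowalski (Theorem 5.53) and used as a black-box input, so there is no in-paper argument to compare yours against. I can only assess your proposal on its own.

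Your argument is correct, including the step you flag as the main obstacle. The strip's Poisson kernel at $\sigma+it$ has total mass $\tfrac{b-\sigma}{b-a}$ on the edge $\Re s=a$ and $\tfrac{\sigma-a}{b-a}$ on $\Re s=b$, and decays like $e^{-\pi|t-t'|/(b-a)}$; writing $\log(2+|t'|)=\log(2+|t|)+\log\tfrac{2+|t'|}{2+|t|}$, the first term integrates to exactly $c(\sigma)\log(2+|t|)$ by the mass computation, and the second contributes $O(1)$ after splitting at $|t-t'|=|t|/2$, since on the near range the logarithm of the ratio is $O(1)$ and on the far range its at-most-linear growth is beaten by the exponential decay of the kernel. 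The rest --- subharmonicity of $\log|f|-u$, the auxiliary $\delta\,\Re\bigl((s-m)^2\bigr)$ forcing decay as $|t|\to\infty$, the maximum principle on the finite rectangle, and the $T\to\infty$, $\delta\to 0^{+}$ limits --- is standard and handled correctly. Two small remarks: boundedness of $f$ on $[a,b]\times[-1,1]$ only needs continuity on the closed strip, so you are invoking slightly more than necessary with ``analyticity on a neighbourhood''; and because $c(\sigma)$ ranges over a compact interval, passing from $(2+|t|)^{c(\sigma)}$ to $|t|^{c(\sigma)}$ for $|t|\geq 1$ costs a uniformly bounded factor, which is what gives the asserted uniformity in $\sigma$.

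The ``alternative'' you mention --- dividing by a zero-free analytic $\Phi$ with $|\Phi(\sigma+it)|\asymp(2+|t|)^{c(\sigma)}$ and of polynomial growth, then applying the bounded-boundary Phragm\'en--Lindel\"of theorem to $f/\Phi$ --- is the conventional textbook route; there $\Phi$ is typically built from a branch of $\log(C\mp is)$ and one argues separately on the half-strips $t\geq 1$ and $t\leq -1$ to avoid branch-cut issues. Your primary route spends more effort on harmonic analysis up front but sidesteps the branch bookkeeping and makes the origin of the linear exponent transparent: it is the harmonic extension of the boundary exponents. Both are valid proofs of the cited theorem.
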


\subsection{Bounds for the Riemann zeta function and Dirichlet $L$-function modulo $q$}
 We also use the following standard bounds of $\zeta(s)$ \cite[page 47]{MR882550},
\begin{displaymath}
 \zeta(\sigma+it) \ll \left\{
   \begin{array}{lr}
     {t^{\frac{1- \sigma}{2}} \log t}, &\  0\le \sigma \le 1,\\
     { \log t},  &\ 1\le\sigma \le 2,\\
      {1} , &\ \sigma \ge 2.
     \end{array}
    \right.\numberthis\label{zetabound}
\end{displaymath}
For non principal characters $\chi\pmod{q}$, we use the following bounds of $L(s,\chi)$ (see \cite{MR551704})
\begin{displaymath}
 L(\sigma+it,\chi) \ll_q \left\{
   \begin{array}{lr}
    t^{\frac{35(1-\sigma)}{108}}\log^3 t , &\  1/2\le \sigma \le 1,\\
     { \log t},  &\ 1\le\sigma \le 2,\\
     {1} , &\ \sigma \ge 2.
     \end{array}
    \right.\numberthis\label{lbound}
\end{displaymath}
Employing \eqref{lbound} and functional equation for the Dirichlet $L$-function modulo $q$ (\cite{Davenport}, page 69-71), we obtain
\begin{displaymath}
 L(\sigma+it,\chi) \ll_q \left\{
   \begin{array}{lr}
   { t^{\frac{1}{2}-\sigma}\log t},  &\ -1\le\sigma \le 0,\\
    t^{\frac{54-73\sigma}{108}}\log^3 t , &\  0\leq\sigma\leq\frac{1}{2}.
     \end{array}
    \right.\numberthis\label{lbound2}
\end{displaymath}

\subsection{Bounds for Dedekind zeta function and Hecke $L$-function with GLH}
Employing Theorem \ref{Phragmen} and Generalized Lindel\"{o}f hypothesis, we obtain the following bounds for the Dedekind zeta function associated with a number field $\mathbb{K}$.
\begin{displaymath}
 \zeta_{\mathbb{K}}(\sigma+it) \ll \left\{
   \begin{array}{lr}
     {|t|^{\epsilon} }, &\  1/2\le \sigma \le 1,\\
     { \log t},  &\ 1\le\sigma \le 2,
     \end{array}
    \right.\numberthis\label{zetak}
    \end{displaymath}
    and for the Hecke $L$-function modulo $\mathfrak{q}$, we have
\begin{displaymath}
\mathcal{L}(\sigma+it, \chi) \ll_{\mathfrak{q}} \left\{
   \begin{array}{lr}
     {|t|^{\epsilon} }, &\  1/2\le \sigma \le 1,\\
     { \log t},  &\ 1\le\sigma \le 2,
     \end{array}
    \right.\numberthis\label{L_k}
    \end{displaymath}
    
\subsection{Bounds for Riemann zeta function and Dirichlet $L$-function over function fields}
Note that the Riemann hypothesis holds in function fields; consequently, the Generalized Lindel\"{o}f hypothesis is true. Therefore, using the functional equation for the Riemann zeta function over function fields \cite[p. 12]{Rosen} and the Generalized Lindel\"{o}f hypothesis in conjunction with Theorem \ref{Phragmen}, we derive the following bounds
\begin{displaymath}
 \zeta_{q}(\sigma+it) \ll \left\{
   \begin{array}{lr}
     {|t|^{\epsilon} }, &\  0\le \sigma \le 1,\\
     { C_1(q)},  &\ 1<\sigma \le 2,
     \end{array}
    \right.\numberthis\label{zetaq}
    \end{displaymath}
and employing Theorem \ref{Phragmen} with the functional equation of Dirichlet $L$-function modulo $\mathfrak{m}$ over the function fields, we obtain
\begin{displaymath}
 L_{q}(\sigma+it,\chi) \ll_{\mathfrak{m}} \left\{
   \begin{array}{lr}
     {|t|^{\epsilon} }, &\  0\le \sigma \le 1,\\
     { C_2(q)},  &\ 1<\sigma \le 2,
     \end{array}
    \right.\numberthis\label{L_q}
    \end{displaymath}
 where $C_1(q)$ and $C_2(q)$ are constants depending on $q$.

\section{ Number fields}
\subsection{Proof of Theorem \ref{thm6 Tauberian}}
We begin with the sum
\[\mathcal{S}_{k,\mathbb{K}}(x)=\sum_{\substack{\mathfrak{I}\subset\mathcal{O}_{\mathbb{K}}\\ \mathcal{N}(\mathfrak{I})\leq x}}\mathcal{F}_{k}(\mathfrak{I})=\sum_{n\leq x}a(n), \]
where $a(n)=\sum_{\substack{\mathfrak{I}\subset\mathcal{O}_{\mathbb{K}}\\ \mathcal{N}\mathfrak{I}=n}}\mathcal{F}_{k}(\mathfrak{I})$.
 Note that the arithmetic function $a(n)$ is multiplicative.
The Dirichlet series of $a(n)$ is given by:
\[F(s)=\sum_{n=1}^{\infty}\frac{a(n)}{n^s}= \frac{\zeta_{\mathbb{K}}(s-1)}{\zeta_{\mathbb{K}}(ks-k)}R_k(s), \]
where $R_k(s)=\prod_{\mathfrak{p}}(1+A_{\mathfrak{p}}(s))$ and
\[A_{\mathfrak{p}}(s)=\frac{\frac{1}{(\mathcal{N}\mathfrak{p})^{ks-\frac{1}{k}}}\sum_{m=0}^{\infty}\frac{1}{(\mathcal{N}\mathfrak{p})^{ms+\frac{1}{k}-\frac{1}{k+m}}}}{\left(1+\frac{1}{(\mathcal{N}\mathfrak{p})^{s-1}}+\frac{1}{(\mathcal{N}\mathfrak{p})^{2s-2}}+\cdots+\frac{1}{(\mathcal{N}\mathfrak{p})^{(k-1)(s-1)}} \right)}\numberthis\label{I32} .\]
We observe that
\begin{align*}
    \left|1+\frac{1}{(\mathcal{N}\mathfrak{p})^{s-1}}+\frac{1}{(\mathcal{N}\mathfrak{p})^{2s-2}}+\cdots+\frac{1}{(\mathcal{N}\mathfrak{p})^{(k-1)(s-1)}} \right|
    \geq \frac{1-\frac{1}{(\mathcal{N}\mathfrak{p})^{k(\sigma-1)}}}{1+\frac{1}{(\mathcal{N}\mathfrak{p})^{\sigma-1}}}\ \text{and}\ 
    \left|\sum_{m=0}^{\infty}\frac{(\mathcal{N}\mathfrak{p})^{-ks+\frac{1}{k}}}{(\mathcal{N}\mathfrak{p})^{ms+\frac{1}{k}-\frac{1}{k+m}}}\right|
     \leq \frac{2}{(\mathcal{N}\mathfrak{p})^{k\sigma-\frac{1}{k}}}.
\end{align*}
The above two estimates with \eqref{I32} yield
\[\sum_{\mathfrak{p}\subset\mathcal{O}_{\mathbb{K}}}|A_{\mathfrak{p}}(s)|\leq \sum_{\mathfrak{p}\subset\mathcal{O}_{\mathbb{K}}}\frac{c}{(\mathcal{N}\mathfrak{p})^{k\sigma-\frac{1}{k}}}<\infty\ \text{for}\ \sigma>\frac{1}{k}+\frac{1}{k^2}. \numberthis\label{I8}\]

Therefore, $R_k(s)$ is absolutely convergent and defines an analytic function in the half-plane $\Re(s)>1$. It is well known that the Dedekind zeta function has analytic continuation to the complex plane except for a simple pole at $s=1$. Also, $\zeta_{\mathbb{K}}(ks-k)$ has no zeros in the half-plane $\Re(s)>1+\frac{1}{k}$ (see \cite{Habiba}). Thus, the Dirichlet series $F(s)$ is absolutely convergent for $\Re(s)>2$ and extends to a meromorphic function in the half-plane $\Re(s)\geq 2$ except for a simple pole at $s=2$ with residue ${\lambda_{\mathbb{K}}{R}_{k}(2)}/{2\zeta_{\mathbb{K}}(k)}$. Moreover, the coefficients $a(n)$ of the Dirichlet series $F(s)$ are non-negative, thus employing \cite[Theorem 1.2]{Murty}, we obtain
\[\mathcal{S}_{k,\mathbb{K}}(x)=\frac{x^2\lambda_{\mathbb{K}}{R}_{k}(2)}{2\zeta_{\mathbb{K}}(k)}+\BigO{\frac{x^2}{\log x}}.\numberthis\label{s2} \]
This completes the proof of Theorem \ref{thm6 Tauberian}.

\subsection{Proof of Theorem \ref{thm4}}
 We begin with the sum in \eqref{Ns1} and use the orthogonality relation of Hecke characters modulo ${\mathfrak{q}}$. Therefore
 \begin{align*}
     \mathcal{S}_{k,\mathbb{K}}(x;\mathfrak{q},\mathfrak{a})&=\sum_{\substack{\mathfrak{I}\subset\mathcal{O}_{\mathbb{K}}\\ \mathcal{N}(\mathfrak{I})\leq x\\ \mathfrak{I}\equiv \mathfrak{a}\pmod{\mathfrak{q}}}}\mathcal{F}_{k}(\mathfrak{I})
=\sum_{\substack{\mathfrak{I}\subset\mathcal{O}_{\mathbb{K}}\\ \mathcal{N}(\mathfrak{I})\leq x}}\mathcal{F}_{k}(\mathfrak{I})\frac{1}{|(\mathcal{O}_{\mathbb{K}}/ \mathfrak{q})^*|}\sum_{\chi\pmod{\mathfrak{q}}}\bar{\chi}(\mathfrak{a})\chi(\mathfrak{I})\\
&=\frac{1}{|(\mathcal{O}_{\mathbb{K}}/ \mathfrak{q})^*|}\sum_{\chi\pmod{\mathfrak{q}}}\bar{\chi}(\mathfrak{a})\sum_{\substack{\mathfrak{I}\subset\mathcal{O}_{\mathbb{K}}\\ \mathcal{N}(\mathfrak{I})\leq x}}\chi(\mathfrak{I})\mathcal{F}_{k}(\mathfrak{I}).\numberthis\label{s1}
 \end{align*}
 We first observe that $\chi(\mathfrak{I})\mathcal{F}_{k}(\mathfrak{I})$ is multiplicative function thus the Dirichlet series of $\chi(\mathfrak{I})\mathcal{F}_{k}(\mathfrak{I})$ is given by:
 \begin{align*}
F(s)&=\sum_{\mathfrak{I}}\frac{\chi(\mathfrak{I})\mathcal{F}_{k}(\mathfrak{I})}{(\mathcal{N}\mathfrak{I})^s}
=\frac{\mathcal{L}(s-1,\chi)}{\mathcal{L}(ks-k,\chi^k)}R_k(s,\chi), \numberthis\label{N6}
 \end{align*}
where $R_k(s,\chi)=\prod_{\mathfrak{p}}(1+\mathfrak{A}_{\mathfrak{p}, \chi}(s))$ and
\[\mathfrak{A}_{\mathfrak{p},\chi}(s)=\frac{\frac{1}{(\mathcal{N}\mathfrak{p})^{ks-\frac{1}{k}}}\sum_{m=0}^{\infty}\frac{\chi(\mathfrak{p}^{k+m})}{(\mathcal{N}\mathfrak{p})^{ms+\frac{1}{k}-\frac{1}{k+m}}}}{\left(1+\frac{\chi(\mathfrak{p})}{(\mathcal{N}\mathfrak{p})^{s-1}}+\frac{\chi(\mathfrak{p}^2)}{(\mathcal{N}\mathfrak{p})^{2s-2}}+\cdots+\frac{\chi(\mathfrak{p}^{k-1})}{(\mathcal{N}\mathfrak{p})^{(k-1)(s-1)}} \right)} .\]
Note that $R_k(s,\chi)$ is absolutely convergent for $\Re(s)>1$ as in \eqref{I8}. For $\chi=\chi_0$, The Hecke $\mathcal{L}$-function $\mathcal{L}(s-1,\chi)$ admits an analytic continuation to the complex plane except for a simple pole at $s=2$ when $\chi=\chi_0$. Moreover, $\mathcal{L}(ks-k,\chi^k)$ has no zeros in the half-plane $\Re(s)>1+1/k$ (see \cite{Coleman}). Thus, the Dirichlet series $F(s)$ has meromorphic continuation in the region $\Re(s)>1+1/k$ with a simple pole at $s=2$. We use Proposition \ref{Roy} with $a_n=A(n,\chi)=\sum_{\substack{\mathfrak{I}\subset\mathcal{O}_{\mathbb{K}}\\ \mathcal{N}\mathfrak{I}=n}}\chi(\mathfrak{I})\mathcal{F}_k(\mathfrak{I})$ and $\alpha=2+\epsilon$, and obtain
\[\sum_{\substack{\mathfrak{I}\subset\mathcal{O}_K\\ \mathcal{N}\mathfrak{I}\leq x}}\chi(\mathfrak{I})\mathcal{F}_k(\mathfrak{I})=\sum_{n\leq x}A(n,\chi)=\frac{1}{2\pi i}\int_{\alpha-iT}^{\alpha+iT}F(s)\frac{x^s}{s}ds+E(T,\chi), \numberthis\label{int}\]
where
\[E(T,\chi)\ll \sum_{\substack{\frac{x}{2}<n<2x\\n\ne x}}|A(n,\chi)|\min{\left(1, \frac{x}{T|x-n|}\right)}+\frac{x^{\alpha}}{T}\sum_{n=1}^{\infty}\frac{|A(n,\chi)|}{n^{\alpha}}. \]
By dividing the interval $x/2<n<2x$ into two parts and using the following inequality
\[|A(n,\chi)|\leq\sum_{\substack{\mathfrak{I}\subset\mathcal{O}_{\mathbb{K}}\\ \mathcal{N}\mathfrak{I}=n}}\mathcal{F}_k(\mathfrak{I})\leq\sum_{\substack{\mathfrak{I}\subset\mathcal{O}_{\mathbb{K}}\\ \mathcal{N}\mathfrak{I}=n}}\mathcal{N}\mathfrak{I}\leq n\sum_{\substack{\mathfrak{I}\subset\mathcal{O}_{\mathbb{K}}\\ \mathcal{N}\mathfrak{I}=n}}1\leq n^{1+\epsilon}, \]
the error term in \eqref{int} can be estimated as
\begin{align*}
    E(T,\chi)&\ll \sum_{\substack{|n-x|<x^{\frac{1}{2}}\\n\ne x}}|A(n,\chi)|\min{\left(1, \frac{x}{T|x-n|}\right)}+\sum_{\substack{x+x^{\frac{1}{2}}<n<2x}}|A(n,\chi)|\min{\left(1, \frac{x}{T|x-n|}\right)}\\&+\frac{x^{\alpha}}{T}\sum_{n=1}^{\infty}\frac{n^{1+\epsilon}}{n^{\alpha}}
    \ll x^{1+\epsilon}+\frac{x^{2+\epsilon}\log x}{T}+\frac{x^{\alpha}}{T}.
\end{align*}
Combining the above estimate with \eqref{int}, we find that
\[\sum_{\substack{\mathfrak{I}\subset\mathcal{O}_K\\ \mathcal{N}\mathfrak{I}\leq x}}\chi(\mathfrak{I})\mathcal{F}_k(\mathfrak{I})=\frac{1}{2\pi i}\int_{\alpha-iT}^{\alpha+iT}\frac{\mathcal{L}(s-1,\chi)x^s}{s\mathcal{L}(ks-k,\chi^k)}R_k(s,\chi)ds+\BigO{x^{1+\epsilon}+\frac{x^{2+\epsilon}\log x}{T}+\frac{x^{\alpha}}{T}} .\numberthis\label{N3}\]
The integral on the right hand-side of the above identity is estimated separately in two case.

\textbf{Estimation of integral (Unconditionally):}
We deal with the cases $m=1$ and $m\geq 2$ separately. For $m=1$, to estimate the integral in \eqref{int}, we use the Vinogradov-Korobov zero-free region for the Dirichlet $L$-function $L(s,\chi)$ modulo $q$  \cite[Theorem 1.1]{Khale}, in conjunction with the mean value estimates for the Riemann zeta function and the Dirichlet $L$-function in terms of Proposition \ref{prop2} and \ref{prop3}, respectively. We choose $\alpha=2+1/\log x,\ \beta=1+1/k-c/(\log x)^{2/3}(\log\log x)^{1/3}$ and shift the line of integration into a rectangular contour with line segments joining the points $\beta\pm iT$ and $\alpha\pm iT$. We first consider the principal character $\chi=\chi_0$. By Cauchy's residue theorem, the integral on the right-hand side of \eqref{N3} can be expressed as
\[\frac{1}{2\pi i}\int_{\alpha-iT}^{\alpha+iT}\frac{x^sL(s-1,\chi_0)}{sL(ks-k,\chi_0)}M_{k,\chi_0}(s)ds=\frac{x^2M_{k,\chi_0}(2)}{2L(k,\chi_0)}\prod_{p|q}\left(1-\frac{1}{p} \right)+\sum_{j=1}^3I_j, \]
where $M_{k,\chi}(s)=\prod_{p}(1+A_{p,\chi}(s))$ and
$A_{p,\chi}(s)={{\left(1+\frac{\chi(p)}{p^{s-1}}+\cdots+\frac{\chi(p^{k-1})}{p^{(k-1)(s-1)}} \right)^{-1}}\sum_{m=0}^{\infty}\frac{\chi(p^{k+m})}{p^{(k+m)s-\frac{1}{k+m}}}}$. Moreover, $I_1$ and $I_3$ are integrals along the horizontal segments $[\alpha-iT,\beta-iT]$ and $[\alpha+iT, \beta+iT]$, respectively and $I_2$ is the integral along the vertical segment $[\beta-iT, \beta+iT]$. In order to estimate the integrals $I_j$'s, we use the bounds provided in \eqref{zetabound}, modulo multiplication by constant depending on $q$ and $k$. Therefore,
\begin{align*}
    I_1, I_3& 
    \ll_{q,k} \log T \left(\int_{\beta}^2\frac{x^{\sigma}|\zeta(\sigma-1+iT)|}{|\sigma+iT|}d\sigma + \int_2^{\alpha}\frac{x^{\sigma}|\zeta(\sigma-1+iT)|}{|\sigma+iT|}d\sigma\right)
    \ll_{q,k} \frac{x^2(\log T)^2}{T\log x}.
\end{align*}
In here, we used the bound $\zeta(k\sigma-k+iT)\gg1/\log T$ (see \cite{Vaughan}). Next, we estimate the integral $I_2$ using Proposition \ref{prop2}
\begin{align*}
    I_2&\ll_{q,k} x^{\beta}\int_0^T\frac{|\zeta(\beta-1+it)|}{|\beta+it||\zeta(k\beta-k+ikt)|}dt
    \ll_{q,k} x^{\beta}\log T\int_0^T\frac{|\zeta(\beta-1+it)|}{|\beta+it|}dt\ll_{q,k}x^{\beta}T^{\frac{3}{2}-\beta}(\log T)^2.
\end{align*}
Next, we consider the case for non-principal character $\chi$ modulo $q$. We continue with the contour defined above and use the bounds provided in \eqref{lbound}. Therefore
\begin{align*}
    I_1, I_3&\ll_{q,k} \log T\int_{\beta}^{\alpha}\frac{x^{\sigma}|L(\sigma-1+iT,\chi)|}{|\sigma+iT|}d\sigma\\
    &\ll_{q,k}\frac{(\log T)^2}{T}\left((\log T)^2\int_{\beta}^{\frac{3}{2}}{x^{\sigma}T^{\frac{127-73\sigma}{108}}}d\sigma+(\log T)^2\int_{\frac{3}{2}}^2x^{\sigma}T^{\frac{35(2-\sigma)}{108}}d\sigma+\int_{2}^{\alpha}{x^{\sigma}}d\sigma \right)
    \ll_{q,k}\frac{x^{2}(\log T)^{2}}{T\log x},
    \end{align*}
and
\begin{align*}
    I_2&\ll_{q,k}  x^{\beta}\log T\int_0^T\frac{|L(\beta-1+it,\chi)|}{|\beta-1+it|}dt
    \ll_{q,k} x^{\beta}T^{\frac{3}{2}-\beta}(\log T)^2,
\end{align*}
where we used $|L(k\sigma-k+it,\chi^k)|\gg_q 1/\log T$ (see \cite{Vaughan}). We take optimally
\[T=x^{\frac{2(k-1)}{3k-2}}\exp{\left(c^{\prime}\left(\frac{\log x}{\log\log x}\right)^{1/3} \right)}, \] 
where $c^{\prime}=2ck/(3k-2)$ and collect all the above estimates to obtain
\[\sum_{n\leq x}\chi_0(n)I_k(n)=\frac{x^2M_{k,\chi_0}(2)}{2L(k,\chi_0)}\prod_{p|q}\left(1-\frac{1}{p} \right)+\BigOkq{x^{\frac{2(2k-1)}{3k-2}}\exp{\left(-c\left(\frac{\log x}{\log\log x}\right)^{1/3} \right)}}.\numberthis\label{I_3} \]
For $\chi\ne\chi_0$, the main term in the above asymptotic formula disappears. Hence, inserting \eqref{I_3} into \eqref{s1} completes the proof for $m=1$.

For $m\geq 2$, to estimate the integral in \eqref{N3}, we deform the line of integration into a rectangular contour with vertices $\beta\pm it$ and $\alpha\pm it$, where $\alpha=2+\epsilon$ and $\beta=1+\frac{1}{k}+\frac{1}{\log x}$. 

\textbf{Case (i):} For $\chi=\chi_0$, the integrand in \eqref{N3} has a simple pole at $s=2$. By Cauchy's residue theorem, we have
\begin{align*}
   \frac{1}{2\pi i}\int_{\alpha-iT}^{\alpha+iT}\frac{x^s\mathcal{L}(s-1,\chi_0)}{s\mathcal{L}(ks-k,\chi_0)}R_k(s,\chi_0)ds&=\frac{x^2\lambda_{\mathbb{K}}R_k(2,\chi_0)}{2\mathcal{L}(k,\chi_0)}\prod_{\mathfrak{p}|\mathfrak{q}}\left(1-\frac{1}{\mathcal{N}\mathfrak{p}} \right)+\frac{1}{2\pi i}\left(\int_{\alpha-iT}^{\beta-iT}+\int_{\beta-iT}^{\beta+iT}+\int_{\beta+iT}^{\alpha+iT} \right)\\ &\times\frac{x^s\mathcal{L}(s-1,\chi_0)R_k(2,\chi_0)}{s\mathcal{L}(ks-k,\chi_0)}ds=\frac{x^2\lambda_{\mathbb{K}}R_k(2,\chi_0)}{2\mathcal{L}(k,\chi_0)}\prod_{\mathfrak{p}|\mathfrak{q}}\left(1-\frac{1}{\mathcal{N}\mathfrak{p}} \right)+\sum_{i=1}^3J_i. \numberthis\label{N10} 
\end{align*}
In order to estimate the integrals in the above identity, we use the following result on the bounds of the Dedekind zeta function.
\begin{thm}\cite[Theorem 4]{Rademacher}\label{Rade}
    In the strip $-\eta\leq\sigma\leq 1+\eta,\ 0<\eta\leq \frac{1}{2}$, the Dedekind zeta function $\zeta_{\mathbb{K}}$ belonging to the number field $\mathbb{K}$ of degree $m$ and discriminant $d$ satisfies the inequality
    \[|\zeta_{\mathbb{K}}(s)|\leq 3\left|\frac{1+s}{1-s} \right|\left(|d|\left(\frac{|1+s|}{2\pi} \right)^m \right)^{\frac{1+\eta-\sigma}{2}}\zeta(1+\eta)^m. \]
\end{thm}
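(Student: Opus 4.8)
The plan is to derive the bound from the functional equation of $\zeta_{\mathbb{K}}$ together with a quantitative version of the Phragm\'en--Lindel\"of principle (Theorem~\ref{Phragmen}), removing the pole at $s=1$ by hand. Write the completed zeta function as $\Lambda_{\mathbb{K}}(s)=|d|^{s/2}\Gamma_{\mathbb{R}}(s)^{r_1}\Gamma_{\mathbb{C}}(s)^{r_2}\zeta_{\mathbb{K}}(s)$, where $\Gamma_{\mathbb{R}}(s)=\pi^{-s/2}\Gamma(s/2)$, $\Gamma_{\mathbb{C}}(s)=2(2\pi)^{-s}\Gamma(s)$, $r_1+2r_2=m$, and $\Lambda_{\mathbb{K}}(s)=\Lambda_{\mathbb{K}}(1-s)$. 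Since $\eta\le\tfrac12<1$, the point $s=-1$ lies outside the strip $-\eta\le\sigma\le 1+\eta$, so $h(s):=\frac{s-1}{s+1}\zeta_{\mathbb{K}}(s)$ is holomorphic throughout that strip (the simple pole at $s=1$ being cancelled) and of finite order there; thus it suffices to bound $|h(s)|$ on the two edges, interpolate, and multiply back by $\bigl|\frac{s+1}{s-1}\bigr|$.

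On the edge $\sigma=1+\eta$ I would use the Euler product: since the Dirichlet coefficients of $\zeta_{\mathbb{K}}$ are nonnegative, $|\zeta_{\mathbb{K}}(s)|\le\zeta_{\mathbb{K}}(1+\eta)$, and grouping prime ideals over rational primes, together with $\mathcal{N}\mathfrak{p}\ge p$ and $\#\{\mathfrak{p}\mid p\}\le m$, gives $\zeta_{\mathbb{K}}(1+\eta)\le\zeta(1+\eta)^m$. As $\bigl|\frac{s-1}{s+1}\bigr|\le 1$ when $\Re(s)\ge 0$, this yields $|h(s)|\le\zeta(1+\eta)^m$ on $\sigma=1+\eta$, i.e.\ with zero exponent on both $|1+s|$ and $|d|$.

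On the edge $\sigma=-\eta$ I would use the functional equation in the form $\zeta_{\mathbb{K}}(s)=|d|^{\frac12-s}G(s)\zeta_{\mathbb{K}}(1-s)$, where $G(s)$ is the ratio of the archimedean gamma factors at $1-s$ and at $s$. Here $\Re(1-s)=1+\eta$, so $|\zeta_{\mathbb{K}}(1-s)|\le\zeta(1+\eta)^m$ again, while $|d|^{\frac12-\sigma}=|d|^{\frac12+\eta}$. Applying Stirling's formula uniformly on $\sigma=-\eta$ to each of the $r_1$ copies of $\Gamma_{\mathbb{R}}(1-s)/\Gamma_{\mathbb{R}}(s)$ and the $r_2$ copies of $\Gamma_{\mathbb{C}}(1-s)/\Gamma_{\mathbb{C}}(s)$, and expressing the result in terms of $|1+s|$ rather than $|s|$ (this is precisely why the theorem is phrased with $1+s$), one gets $|G(-\eta+it)|\le\bigl(\tfrac{|1+s|}{2\pi}\bigr)^{m(\frac12+\eta)}$; since moreover $\bigl|\frac{s-1}{s+1}\bigr|\le\frac{1+\eta}{1-\eta}\le 3$ on this line, we obtain $|h(s)|\le 3\,|d|^{\frac12+\eta}\bigl(\tfrac{|1+s|}{2\pi}\bigr)^{m(\frac12+\eta)}\zeta(1+\eta)^m$ on $\sigma=-\eta$.

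Finally I would apply a quantitative form of Theorem~\ref{Phragmen} to $h$ in the strip, interpolating with respect to the factor $|1+s|$ (legitimate since $\Re(1+s)\ge 1-\eta>0$): the exponent of $\bigl(\tfrac{|1+s|}{2\pi}\bigr)$ is $0$ at $\sigma=1+\eta$ and $m(\tfrac12+\eta)$ at $\sigma=-\eta$, hence linearly $\frac{m(1+\eta-\sigma)}{2}$ in between, and likewise the exponent of $|d|$ interpolates to $\frac{1+\eta-\sigma}{2}$, with the constant remaining $\le 3\zeta(1+\eta)^m$. Dividing by $\bigl|\frac{s-1}{s+1}\bigr|$ recovers $|\zeta_{\mathbb{K}}(s)|\le 3\bigl|\frac{1+s}{1-s}\bigr|\bigl(|d|(\tfrac{|1+s|}{2\pi})^{m}\bigr)^{\frac{1+\eta-\sigma}{2}}\zeta(1+\eta)^m$. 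The main obstacle is the third step: making the Stirling estimates for the gamma-factor ratio uniform on the whole line $\sigma=-\eta$ (including small $|t|$, where the $\Gamma$-poles sit near the line and interact with the trivial zeros of $\zeta_{\mathbb{K}}$) and clean enough to produce exactly the stated exponent $m(\tfrac12+\eta)$ and the explicit constant, with the shift from $|s|$ to $|1+s|$ carried out carefully; the Euler-product bound and the Phragm\'en--Lindel\"of step are then routine.
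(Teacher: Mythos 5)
The paper does not prove this result; it is quoted verbatim from Rademacher \cite{Rademacher} (his Theorem 4), so the relevant comparison is with Rademacher's own argument. Your sketch reconstructs it faithfully: multiply by $(s-1)/(s+1)$ to remove the pole, use the Euler-product bound $\zeta_{\mathbb{K}}(1+\eta+it)\le\zeta_{\mathbb{K}}(1+\eta)\le\zeta(1+\eta)^m$ on the right edge, use the functional equation together with a quantitative gamma-ratio estimate expressed in terms of $|1+s|$ on the left edge, and then interpolate the constant, the power of $|d|$, and the power of $|1+s|$ simultaneously via a quantitative convexity theorem (Rademacher's Theorem~2, a sharpened version of the paper's Theorem~\ref{Phragmen}). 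You have also correctly isolated the one non-routine ingredient, namely a Stirling bound for $\gamma(1-s)/\gamma(s)$ that is uniform down to $t=0$ and yields the exact exponent $m(\tfrac12+\eta)$ with the $|1+s|$ normalization; Rademacher establishes this in a dedicated lemma, so your plan is essentially his proof.
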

Employing the above theorem with $\eta=\epsilon$ and using the Euler product representation for the Dedekind zeta function, we have
\[\zeta_{\mathbb{K}}\left(\frac{1}{2}+it\right)\ll |d|^{\frac{1}{4}+\epsilon}t^{\frac{m}{4}+\epsilon}(\log t)^m\ll_d t^{\frac{m}{4}+\epsilon}\ \text{and}\ \frac{1}{\zeta_{\mathbb{K}}(\frac{k}{2}+ikt)}\leq \frac{\zeta_{\mathbb{K}}(k/2)}{\zeta_{\mathbb{K}}(k)}\ll t^{\epsilon}. \]
Using the above bounds, the integral $J_2$ is estimated as follows:
\[J_2\ll_{\mathfrak{q},k, d}x^{\beta}\int_{0}^{T}t^{\frac{m(k-1)}{2k}-1+\epsilon}dt\ll_{\mathfrak{q},k,d}x^{\beta}T^{\frac{m(k-1)}{2k}+\epsilon}.\numberthis\label{N11} \]
Next, we estimate the integral $J_3$ using Theorem \ref{Rade}
\begin{align*}
     J_3&\ll_{\mathfrak{q},k,d} \int_{\beta}^{\alpha}\frac{x^{\sigma}|\zeta_{\mathbb{K}}(\sigma-1+iT)|}{|\sigma+iT||\zeta_{\mathbb{K}}(k\sigma-k+ikT)|}d\sigma\\ 
    &\ll_{\mathfrak{q}, k, d}\frac{1}{T^{1-\epsilon}}\left(\int_{\beta}^{2}T^{(2+\epsilon-\sigma)\frac{m}{2}}x^{\sigma}d\sigma+\log T\int_{2}^{\alpha}x^{\sigma}d\sigma \right)\ll_{\mathfrak{q}, k, d}\frac{x^2}{(\log x) T^{1-\frac{m(k-1)}{2k}+\epsilon}}.\numberthis\label{N12}
\end{align*}
Similar estimate follow for $J_1$.
We collect the above estimates from \eqref{N11}, \eqref{N12} and substitute into \eqref{N10} with $T=x^\frac{k-1}{k}$ and obtain
\[\frac{1}{2\pi i}\int_{\alpha-iT}^{\alpha+iT}\frac{x^s\mathcal{L}(s-1,\chi)}{s\mathcal{L}(ks-k,\chi^k)}R_k(s,\chi)ds=\frac{x^2\lambda_{\mathbb{K}}R_k(2,\chi_0)}{2\mathcal{L}(k,\chi_0)}\prod_{\mathfrak{p}|\mathfrak{q}}\left(1-\frac{1}{\mathcal{N}\mathfrak{p}} \right)+\BigOqkd{x^{1+\frac{1}{k}+\frac{m(k-1)^2}{2k^2}+\epsilon}}. \numberthis\label{N1}\]

\textbf{Case (ii):} For non-principal character $\chi\ne\chi_0$, we proceed with the same rectangular contour defined above to estimate the integral in \eqref{N3}.
By Cauchy's residue theorem, we have
\begin{align*}
   \frac{1}{2\pi i}\int_{\alpha-iT}^{\alpha+iT}\frac{x^s\mathcal{L}(s-1,\chi)}{s\mathcal{L}(ks-k,\chi^k)}R_k(s,\chi)ds&=\sum_{i=1}^3J_i, \numberthis\label{27}
\end{align*}
where $J_i$ are integrals as in \eqref{N10}. 
 The following result is useful for the bounds of Hecke $L$-function modulo $\mathfrak{q}$. 
\begin{thm}\cite[Lemma 4]{Fogels}
  Let $\mathbb{K}$ be a number field of degree $m$ and discriminant $d$. Let $\chi$ be a non-principal Hecke character modulo $\mathfrak{q}$. For any positive $\delta\leq \frac{1}{\log d}<\frac{1}{2}$, we have uniformly in $-\delta\leq\sigma\leq 1+\delta$
    \[\mathcal{L}(s,\chi)\ll \delta^{-m}d^{\frac{1-\sigma}{2}}(1+|t|)^{\frac{(1+\delta-\sigma)m}{2}}. \]
\end{thm}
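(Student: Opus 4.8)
The plan is to obtain the bound from the Phragm\'en--Lindel\"of convexity principle (Theorem \ref{Phragmen}), fed with bounds for $\mathcal{L}(s,\chi)$ on the two edges $\Re(s)=1+\delta$ and $\Re(s)=-\delta$ of the strip. As $\chi$ is non-principal, $\mathcal{L}(s,\chi)$ is entire, so there is no pole in the strip, and standard arguments (the absolutely convergent Dirichlet series on the right, the functional equation on the left) show that $\mathcal{L}(\sigma+it,\chi)$ grows at most polynomially in $|t|$ there, in particular $\ll\exp(\epsilon|t|)$; thus the hypotheses of Theorem \ref{Phragmen} are met.

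On the right edge the Euler product converges absolutely, so
\[
|\mathcal{L}(1+\delta+it,\chi)|\leq\prod_{\mathfrak{p}}\left(1-(\mathcal{N}\mathfrak{p})^{-1-\delta}\right)^{-1}=\zeta_{\mathbb{K}}(1+\delta)\leq\zeta(1+\delta)^{m}\ll\delta^{-m},
\]
using that each rational prime has at most $m$ prime ideals of $\mathcal{O}_{\mathbb{K}}$ above it, each of norm at least that prime. Since $\delta\leq(\log|d|)^{-1}$ forces $|d|^{\delta}\leq e$, this matches the asserted bound at $\sigma=1+\delta$, where $|d|^{(1-\sigma)/2}\asymp1$ and the $|t|$-exponent is $0$. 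On the left edge I would use the functional equation $\Lambda(s,\chi)=W(\chi)\Lambda(1-s,\bar\chi)$, $|W(\chi)|=1$, with $\Lambda(s,\chi)=(|d|\,\mathcal{N}\mathfrak{f}_{\chi})^{s/2}\gamma(s,\chi)\mathcal{L}(s,\chi)$ and $\gamma(s,\chi)$ the product of archimedean Gamma factors; solving for $\mathcal{L}$ and combining $\mathcal{N}\mathfrak{f}_{\chi}\leq\mathcal{N}\mathfrak{q}$, $|d|^{\delta}\ll1$, $\mathcal{L}(1+\delta-it,\bar\chi)\ll\delta^{-m}$, and the Stirling estimate $\gamma(1+\delta-it,\bar\chi)/\gamma(-\delta+it,\chi)\ll(1+|t|)^{m(1+2\delta)/2}$ yields
\[
\mathcal{L}(-\delta+it,\chi)\ll\delta^{-m}\,|d|^{(1+\delta)/2}\,(1+|t|)^{m(1+2\delta)/2},
\]
which is the asserted bound at $\sigma=-\delta$.

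To interpolate I would apply Theorem \ref{Phragmen} to $g(s)=\delta^{m}|d|^{(s-1-\delta)/2}\mathcal{L}(s,\chi)$, which by the above is $\ll(1+|t|)^{0}$ on $\Re(s)=1+\delta$ and $\ll(1+|t|)^{m(1+2\delta)/2}$ on $\Re(s)=-\delta$; hence $g(\sigma+it)\ll(1+|t|)^{c(\sigma)}$ with $c(\sigma)$ linear, $c(1+\delta)=0$ and $c(-\delta)=m(1+2\delta)/2$, that is $c(\sigma)=\tfrac m2(1+\delta-\sigma)$. Unwinding, $\mathcal{L}(s,\chi)=\delta^{-m}|d|^{(1+\delta-s)/2}g(s)$ together with $|d|^{\delta/2}\ll1$ gives
\[
\mathcal{L}(s,\chi)\ll\delta^{-m}\,|d|^{(1-\sigma)/2}\,(1+|t|)^{\frac{(1+\delta-\sigma)m}{2}}
\]
uniformly for $-\delta\leq\sigma\leq1+\delta$, as claimed.

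The delicate point --- the part Theorem \ref{Phragmen} does not supply for free, since it interpolates only the $|t|$-aspect --- is the uniform control of the $|d|$- and conductor-dependence: this needs the functional equation with the correct analytic conductor and a Stirling bound for the ratio of Gamma factors uniform across the strip. The hypothesis $\delta\leq(\log|d|)^{-1}$ is precisely what keeps the parasitic factors $|d|^{O(\delta)}$ and $(1+|t|)^{O(\delta)}$ bounded and $(\log|d|)^{O(1)}$ absorbable into $\delta^{-m}$, so that the clean linear-interpolation estimate survives.
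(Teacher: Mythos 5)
The paper does not reprove this bound: it is imported verbatim as \cite[Lemma 4]{Fogels}, so there is no internal proof to compare against. Your argument is the standard Phragm\'en--Lindel\"of convexity proof, which is also the route Fogels himself takes, and it is essentially correct. The Euler-product bound $\zeta_{\mathbb{K}}(1+\delta)\le\zeta(1+\delta)^{m}\ll\delta^{-m}$ on the right edge is right, the Stirling count $|t|^{r_{1}(1+2\delta)/2+r_{2}(1+2\delta)}=|t|^{m(1+2\delta)/2}$ for the ratio of archimedean factors is right, and your device of pre-multiplying by $|d|^{(s-1-\delta)/2}$ so that the paper's Theorem \ref{Phragmen} (which interpolates only in $|t|$) can be applied is exactly the correct way to push the $|d|$-dependence through the convexity principle; the condition $\delta\le(\log|d|)^{-1}$ then kills the parasitic $|d|^{\delta/2}$ factor as you say.

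One small wrinkle worth being explicit about: the functional equation really produces a factor $(|d|\,\mathcal{N}\mathfrak{f}_{\chi})^{(1+2\delta)/2}$ on the left edge, and the statement as the paper records it mentions only the discriminant $d$. You absorb $\mathcal{N}\mathfrak{f}_{\chi}\le\mathcal{N}\mathfrak{q}$ into the implied constant, so your $\ll$ is really $\ll_{\mathfrak{q}}$ (which is consistent with how the paper uses this lemma, always with $\mathfrak{q}$ fixed, and with the unsubscripted $\ll$ it writes). Fogels' original lemma instead builds the conductor into the quantity called $d$; the paper's transcription is a little loose on this point, but your proof handles the discrepancy correctly by flagging where the conductor goes.
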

We apply above theorem to estimate the integrals in \eqref{27}. Therefore,
\[J_2\ll_{\mathfrak{q},k,d}x^{\beta}\int_{0}^{T}\frac{x^{\sigma}|\mathcal{L}(\sigma-1+iT,\chi)|}{|\sigma+iT||\mathcal{L}(k\sigma-k+ikT,\chi^k)|}d\sigma\ll_{\mathfrak{q},k,d}x^{\beta}\int_{0}^{T}t^{\frac{m(k-1)}{2k}-1+\epsilon}dt\ll_{\mathfrak{q},k,d}x^{\beta}T^{\frac{m(k-1)}{2k}+\epsilon}, \]
and
\[ J_3\ll_{\mathfrak{q},k,d} \int_{\beta}^{\alpha}\frac{x^{\sigma}|\mathcal{L}(\sigma-1+iT,\chi)|}{|\sigma+iT||\mathcal{L}(k\sigma-k+ikT,\chi^k)|}d\sigma
    \ll_{\mathfrak{q},k,d}\frac{1}{T^{1-\epsilon}}\int_{\beta}^{\alpha}T^{(2+\epsilon-\sigma)\frac{m}{2}}x^{\sigma}d\sigma\ll_{\mathfrak{q},k,d}\frac{x^\alpha}{T^{1-\frac{m(k-1)}{2k}-\epsilon}\log x}. \]
 A similar estimates holds for $J_1$. Collecting above estimate with $T=x^{\frac{k-1}{k}}$, we obtain
  \[\frac{1}{2\pi i}\int_{\alpha-iT}^{\alpha+iT}\frac{x^s\mathcal{L}(s-1,\chi)}{s\mathcal{L}(ks-k,\chi^k)}R_k(s,\chi)ds=\BigOqkd{x^{1+\frac{1}{k}+\frac{m(k-1)^2}{2k^2}+\epsilon}}. \numberthis\label{N2}\]  
Collecting the estimates \eqref{N3}, \eqref{N1}, and \eqref{N2}, with $T=x^{\frac{1}{2}}$ and inserting them into \eqref{s1} gives the required result unconditionally.

\textbf{Estimation of integral under the Generalized Lindel\"{o}f hypothesis:} We proceed with the same rectangular contour as defined above.

\noindent
\textbf{Case (i):} For $\chi=\chi_0$, the integrand in \eqref{int} has a simple pole at $s=2$. By Cauchy's residue theorem and \eqref{N10}, we have
\begin{align*}
   \frac{1}{2\pi i}\int_{\alpha-iT}^{\alpha+iT}\frac{x^s\mathcal{L}(s-1,\chi_0)}{s\mathcal{L}(ks-k,\chi_0)}R_k(s,\chi_0)ds&=\frac{x^2\lambda_{\mathbb{K}}R_k(2,\chi_0)}{2\mathcal{L}(k,\chi_0)}\prod_{\mathfrak{p}|\mathfrak{q}}\left(1-\frac{1}{\mathcal{N}\mathfrak{p}} \right)+\sum_{i=1}^3J_i, \numberthis\label{N23}
\end{align*}
where $J_i$ denotes the same integral as in \eqref{N10}. Employing the bounds in \eqref{zetak}, we have
\[J_2\ll_{\mathfrak{q},k}x^{\beta}\int_{0}^{T}t^{-1+\epsilon}dt\ll_{\mathfrak{q},k}x^{\beta}T^{\epsilon}\ \text{and}\ J_1,J_3 
    \ll_{\mathfrak{q},k}\frac{1}{T^{1-\epsilon}}\left(T^{\epsilon}\int_{\beta}^{2}x^{\sigma}d\sigma+\log T\int_{2}^{\alpha}x^{\sigma}d\sigma \right)\ll_{\mathfrak{q},k}\frac{x^2}{T^{1-\epsilon}}. \]
 By collecting the above estimates and substituting them into \eqref{N23} with $T=x^{1-\frac{1}{k}}$, we get
\[\frac{1}{2\pi i}\int_{\alpha-iT}^{\alpha+iT}\frac{x^s\mathcal{L}(s-1,\chi_0)}{s\mathcal{L}(ks-k,\chi_0)}R_k(s,\chi_0)ds=\frac{x^2\lambda_{\mathbb{K}}R_k(2,\chi_0)}{2\mathcal{L}(k,\chi_0)}\prod_{\mathfrak{p}|\mathfrak{q}}\left(1-\frac{1}{\mathcal{N}\mathfrak{p}} \right)+\BigOqk{x^{\frac{k+1}{k}+\epsilon}}.\numberthis\label{N24} \]

\textbf{Case (ii):} For $\chi\ne\chi_0$, by Cauchy's residue theorem, we have
\begin{align*}
   \frac{1}{2\pi i}\int_{\alpha-iT}^{\alpha+iT}\frac{x^s\mathcal{L}(s-1,\chi)}{s\mathcal{L}(ks-k,\chi^k)}R_k(s,\chi)ds&=\sum_{i=1}^3J_i, \numberthis
\end{align*}
where $J_i$ are same integrals as in \eqref{N10}. We use the bounds in \eqref{L_k} to obtain
\[J_2\ll_{\mathfrak{q},k}x^{\beta}\int_{0}^{T}t^{-1+\epsilon}dt\ll_{\mathfrak{q},k}x^{\beta}T^{\epsilon}\ \text{and}\ J_1,J_3 
    \ll_{\mathfrak{q},k}\frac{1}{T^{1-\epsilon}}\left(T^{\epsilon}\int_{\beta}^{2}x^{\sigma}d\sigma+\log T\int_{2}^{\alpha}x^{\sigma}d\sigma \right)\ll_{\mathfrak{q},k}\frac{x^2}{T^{1-\epsilon}}. \]
 We gather the above estimates and inserting them into \eqref{N23} and choose $T=x^{\frac{k-1}{k}}$. Therefore
\[\frac{1}{2\pi i}\int_{\alpha-iT}^{\alpha+iT}\frac{x^s\mathcal{L}(s-1,\chi)}{s\mathcal{L}(ks-k,\chi^k)}R_k(s,\chi)ds=\BigO{x^{\frac{k+1}{k}+\epsilon}}.\numberthis\label{N25} \]
By collecting \eqref{s1}, \eqref{N3}, \eqref{N24}, and \eqref{N25}, and setting $T=x^{\frac{k-1}{k}}$, we obtain the required result under GLH. This completes the proof of Theorem \ref{thm4}.

\subsection{Proof of Theorem \ref{thm6 Tauberian2}}

Using \eqref{s1}, we have
\begin{align*}
    \mathcal{S}_{k,\mathbb{K}}(x;\mathfrak{q},\mathfrak{a})&=\frac{1}{|(\mathcal{O}_{\mathbb{K}}/ \mathfrak{q})^*|}\sum_{\chi\pmod{\mathfrak{q}}}\bar{\chi}(\mathfrak{a})\sum_{\substack{\mathfrak{I}\subset\mathcal{O}_{\mathbb{K}}\\ \mathcal{N}(\mathfrak{I})\leq x}}\chi(\mathfrak{I})\mathcal{F}_{k}(\mathfrak{I})\ll_{\mathfrak{q}} \sum_{\substack{\mathfrak{I}\subset\mathcal{O}_{\mathbb{K}}\\ \mathcal{N}(\mathfrak{I})\leq x}}\mathcal{F}_{k}(\mathfrak{I}).
\end{align*}
The above estimate, in conjunction with \eqref{s2}, yields
\[\mathcal{S}_{k,\mathbb{K}}(x;\mathfrak{q},\mathfrak{a})\ll_{\mathfrak{q}} \frac{x^2\lambda_{\mathbb{K}}{R}_{k}(2)}{2\zeta_{\mathbb{K}}(k)}. \]
This completes the proof of Theorem \ref{thm6 Tauberian2}.

\subsection{Proof of Theorem \ref{thm5}}
In proving Theorem \ref{thm5}, we use an analog of Landau's result \cite[Theorem 1.7]{Vaughan} in terms of Proposition \ref{omega}.
We denote
\[\mathfrak{A}_{\mathbb{K}}(x)=-\mathcal{S}_{k,\mathbb{K}}(x;\mathfrak{q},\mathfrak{a})+\frac{x^2\lambda_{\mathbb{K}}{R}_{k}(2,\chi_0)}{2|(\mathcal{O}_\mathbb{K}/\mathfrak{q})^*|\mathcal{L}(k,\chi_0)}\prod_{\mathfrak{p}|\mathfrak{q}}\left(1-\frac{1}{\mathcal{N}(\mathfrak{p})}\right)+x^{1+\frac{\Theta}{k}-\epsilon}, \]
where $\Theta$ denotes the supremum of real part of the zeros of the Hecke $L$-function mod $\mathfrak{q}$. For $\sigma>2$, we consider the integral
\begin{align*}
    \int_1^{\infty}\mathfrak{A}_{\mathbb{K}}(x)x^{-s-1}dx
    &=\frac{-1}{|(\mathcal{O}_\mathbb{K}/\mathfrak{q})^*|}\sum_{\chi\pmod{\mathfrak{q}}}\bar{\chi}(\mathfrak{a})\sum_{\substack{\mathfrak{I}\subset\mathcal{O}_{\mathbb{K}}}}\chi(\mathfrak{I})\mathcal{F}_{k}(\mathfrak{I})\int_{\mathcal{N}\mathfrak{I}}^{\infty}\frac{dx}{x^{s+1}}+\frac{\lambda_{\mathbb{K}}{R}_{k}(2,\chi_0)}{2|(\mathcal{O}_\mathbb{K}/\mathfrak{q})^*|\mathcal{L}(k,\chi_0)}\\&\times\prod_{\mathfrak{p}|\mathfrak{q}}\left(1-\frac{1}{\mathcal{N}(\mathfrak{p})}\right)\int_1^{\infty}x^{1-s}dx+\frac{1}{s-1-\frac{\Theta}{k}+\epsilon}\\
    &=\frac{-1}{s|(\mathcal{O}_\mathbb{K}/\mathfrak{q})^*|}\sum_{\substack{\chi\pmod{\mathfrak{q}}\\\chi\ne\chi_0}}\frac{\bar{\chi}(\mathfrak{a})\mathcal{L}(s-1,\chi)}{\mathcal{L}(ks-k,\chi^k)}R_k(s,\chi)-\frac{\mathcal{L}(s-1,\chi_0)R_k(s,\chi_0)}{s|(\mathcal{O}_\mathbb{K}/\mathfrak{q})^*|\mathcal{L}(ks-k,\chi_0)}\\&+\frac{\lambda_{\mathbb{K}}{R}_{k}(2,\chi_0)}{2(s-2)|(\mathcal{O}_\mathbb{K}/\mathfrak{q})^*|\mathcal{L}(k,\chi_0)}\prod_{\mathfrak{p}|\mathfrak{q}}\left(1-\frac{1}{\mathcal{N}(\mathfrak{p})}\right)+\frac{1}{s-1-\frac{\Theta}{k}+\epsilon}.\numberthis\label{N4}
    \end{align*}
    Note that the second term on the right hand side of \eqref{N4} has a simple pole at $s=2$ and it will be canceled by the pole of third term. Therefore, the right hand side of \eqref{N4} is analytic in the half plane $\Re(s)>1+\frac{1}{k}$ and assuming Haselgrove's condition for Hecke $L$-function modulo $\mathfrak{q}$ it can be analytically continued to the real segment $(1+\frac{\Theta}{k}-\epsilon, 1+\frac{1}{k}]$ with a pole at $s=1+\frac{\Theta}{k}-\epsilon$. This yields 
$\int_1^{\infty}\mathfrak{A}_{\mathbb{K}}(x)x^{-\sigma-1}dx<\infty$ for $\sigma>1+\frac{\Theta}{k}-\epsilon$. Suppose that 
\[\mathcal{S}_{k,\mathbb{K}}(x;\mathfrak{q},\mathfrak{a})-\frac{x^2\lambda_{\mathbb{K}}{R}_{k}(2,\chi_0)}{2|(\mathcal{O}_\mathbb{K}/\mathfrak{q})^*|\mathcal{L}(k,\chi_0)}\prod_{\mathfrak{p}|\mathfrak{q}}\left(1-\frac{1}{\mathcal{N}(\mathfrak{p})}\right)<x^{1+\frac{\Theta}{k}-\epsilon}\ \text{for all}\ x>x_0(\epsilon) .\numberthis\label{N5}\]
We apply Proposition \ref{omega} for $\mathfrak{A}_{\mathbb{K}}(x)$ and obtain that the integral $\int_1^{\infty}\mathfrak{A}_{\mathbb{K}}(x)x^{-s-1}dx$ is analytic in the half plane $\Re(s)>1+\frac{\Theta}{k}-\epsilon$. In view of the definition of $\Theta$, the right hand side of \eqref{N4} has pole at $\Re(s)=1+\frac{\Theta}{k}$ arising from the zeros of the Hecke $L$-function $\mathcal{L}(ks-k,\chi^k)$. This leads to a contradiction, and we can deduce that the assumption in \eqref{N5} is false. Therefore,
\[\mathcal{S}_{k,\mathbb{K}}(x;\mathfrak{q},\mathfrak{a})-\frac{x^2\lambda_{\mathbb{K}}{R}_{k}(2,\chi_0)}{2|(\mathcal{O}_\mathbb{K}/\mathfrak{q})^*|\mathcal{L}(k,\chi_0)}\prod_{\mathfrak{p}|\mathfrak{q}}\left(1-\frac{1}{\mathcal{N}(\mathfrak{p})}\right)=\Omega_{+}\left(x^{1+\frac{\Theta}{k}-\epsilon}\right). \]
To obtain the corresponding estimate for $\Omega_-$, we proceed in a similar way with the following identity
\[\mathfrak{A}_{\mathbb{K}}(x)=\mathcal{S}_{\mathbb{K}}(x;\mathfrak{q},\mathfrak{a})-\frac{x^2\lambda_{\mathbb{K}}{R}_{k}(2,\chi_0)}{2|(\mathcal{O}_\mathbb{K}/\mathfrak{q})^*|\mathcal{L}(k,\chi_0)}\prod_{\mathfrak{p}|\mathfrak{q}}\left(1-\frac{1}{\mathcal{N}(\mathfrak{p})}\right)-x^{1+\frac{\Theta}{k}-\epsilon}, \]
and get
\[\mathcal{S}_{k,\mathbb{K}}(x;\mathfrak{q},\mathfrak{a})-\frac{x^2\lambda_{\mathbb{K}}{R}_{k}(2,\chi_0)}{2|(\mathcal{O}_\mathbb{K}/\mathfrak{q})^*|\mathcal{L}(k,\chi_0)}\prod_{\mathfrak{p}|\mathfrak{q}}\left(1-\frac{1}{\mathcal{N}(\mathfrak{p})}\right)=\Omega_{-}\left(x^{1+\frac{\Theta}{k}-\epsilon}\right). \]
This completes the proof of Theorem \ref{thm5}.

\subsection{Proof of Theorem \ref{thm6}}
In order to prove Theorem \ref{thm6}, we use Knapowski and Turan \cite{Knapowski} approach. We denote $\mathcal{A}(x)=\mathcal{S}_{k,\mathbb{K}}(x;\mathfrak{q},\mathfrak{a}_1)-\mathcal{S}_{k,\mathbb{K}}(x;\mathfrak{q},\mathfrak{a}_2)\pm cx^{1+\frac{1}{2k}-\epsilon}$ and begin with the integral 
\[g(s)=\int_1^{\infty}\mathcal{A}(x)x^{-s-1}dx,\ \text{for}\ \epsilon>0, \]
where
\[\mathcal{S}_{k,\mathbb{K}}(x;\mathfrak{q},\mathfrak{a}_1)-\mathcal{S}_{k,\mathbb{K}}(x;\mathfrak{q},\mathfrak{a}_2)=\frac{1}{|(\mathcal{O}_{\mathbb{K}}/ \mathfrak{q})^*|}\sum_{\chi\pmod{\mathfrak{q}}}(\bar{\chi}(\mathfrak{a}_1)-\bar{\chi}(\mathfrak{a}_2))\sum_{\substack{\mathfrak{I}\subset\mathcal{O}_{\mathbb{K}}\\ \mathcal{N}(\mathfrak{I})\leq x}}\chi(\mathfrak{I})\mathcal{F}_{k}(\mathfrak{I}). \]
Therefore,
\begin{align*}
    g(s)&=\frac{1}{|(\mathcal{O}_{\mathbb{K}}/ \mathfrak{q})^*|}\sum_{\chi\pmod{\mathfrak{q}}}(\bar{\chi}(\mathfrak{a}_1)-\bar{\chi}(\mathfrak{a}_2))\int_1^{\infty}\frac{1}{x^{s+1}}\sum_{\substack{\mathfrak{I}\subset\mathcal{O}_{\mathbb{K}}\\ \mathcal{N}(\mathfrak{I})\leq x}}\chi(\mathfrak{I})\mathcal{F}_{k}(\mathfrak{I})dx\pm \frac{c}{s-1-\frac{1}{2k}+\epsilon}.\numberthis\label{N7}
\end{align*}
Note that the integral on the right hand side of the above identity represents the Dirichlet series $\sum_{\substack{\mathfrak{I}\subset\mathcal{O}_{\mathbb{K}}}}\frac{\chi(\mathfrak{I})\mathcal{F}_{k}(\mathfrak{I})}{(\mathcal{N}\mathfrak{I})^s}$ as a Mellin transform. Thus
\begin{align*}
  \int_1^{\infty}\frac{1}{x^{s+1}}\sum_{\substack{\mathfrak{I}\subset\mathcal{O}_{\mathbb{K}}\\ \mathcal{N}(\mathfrak{I})\leq x}}\chi(\mathfrak{I})\mathcal{F}_{k}(\mathfrak{I})dx&=\frac{1}{s}\sum_{\substack{\mathfrak{I}\subset\mathcal{O}_{\mathbb{K}}}}\frac{\chi(\mathfrak{I})\mathcal{F}_{k}(\mathfrak{I})}{(\mathcal{N}\mathfrak{I})^s}
  =\frac{\mathcal{L}(s-1,\chi)}{s\mathcal{L}(ks-k,\chi^k)}R_k(s,\chi),\ \text{for}\ \sigma>2.
\end{align*}
In the last step, we used equation \eqref{N6}.
Inserting the above estimate into \eqref{N7} yields
\begin{align*}
   {g}(s)&=\frac{1}{|(\mathcal{O}_{\mathbb{K}}/ \mathfrak{q})^*|}\sum_{\chi\pmod{\mathfrak{q}}}(\bar{\chi}(\mathfrak{a}_1)-\bar{\chi}(\mathfrak{a}_2))\frac{\mathcal{L}(s-1,\chi)}{s\mathcal{L}(ks-k,\chi^k)}R_k(s,\chi) \pm \frac{c}{s-1-\frac{1}{2k}+\epsilon}\\
   &=\frac{1}{|(\mathcal{O}_{\mathbb{K}}/ \mathfrak{q})^*|}\sum_{\substack{\chi\pmod{\mathfrak{q}}\\\chi\ne\chi_0}}(\bar{\chi}(\mathfrak{a}_1)-\bar{\chi}(\mathfrak{a}_2))\frac{\mathcal{L}(s-1,\chi)}{s\mathcal{L}(ks-k,\chi^k)}R_k(s,\chi) \pm \frac{c}{s-1-\frac{1}{2k}+\epsilon}.
\end{align*}
In here, we utilized the fact that $\bar{\chi}(\mathfrak{a}_1)=\bar{\chi}(\mathfrak{a}_1)=1$ for $\gcd(\mathfrak{a}_1\mathfrak{a}_2,\mathfrak{q})=1$. 
Note that the product term $R_{k,\chi}(s)$ is absolutely convergent for $\Re(\sigma)>1$. It is well known that the Hecke $L$-function is analytic in the half-plane $\Re(s)$ for $\chi\ne\chi_0$, and the denominator $\mathcal{L}(ks-k,\chi^k)$ has no zero (see Davenport \cite{Davenport}, p. 84-85) in the region $\Re(s)\geq 1+\frac{1}{k}$. Thus, $g(s)$ is analytic in the half plane $\Re(s)\geq 1+\frac{1}{k}$.

Using the Haselgrove's condition, the Hecke $L$-function $\mathcal{L}(ks-k,\chi^k)\ne 0$ on the real segment $(1+\frac{1}{2k}-\epsilon, 1+\frac{1}{k}]$, it implies that $1/\mathcal{L}(ks-k,\chi^k)$ admits an analytic continuation on the real segment $1+\frac{1}{2k}-\epsilon<\sigma<1+\frac{1}{k}$.
 Therefore, $g(s)$ is regular on the real segment $1+\frac{1}{2k}-\epsilon<\sigma<1+\frac{1}{k}$.

 Suppose there is a positive constant $x_0$ such that $\mathcal{A}(x)$ does not change the sign for $x>x_0$. Then by Proposition \ref{Landau}, $g(s)$ is analytic in the half-plane $\Re(s)>1+\frac{1}{2k}-\epsilon$. Therefore, each zero of the denominator $\mathcal{L}(ks-k,\chi^k)$ is canceled by the zeros of numerator $\mathcal{L}(s-1,\chi)$. However, it is well known that almost all zeros of $\mathcal{L}(ks-k,\chi^k)$ are close to $\Re(s)=1+\frac{1}{2k}$. Similarly, all non-trivial zeros of $\mathcal{L}(s-1,\chi)$ are close to $\sigma=\frac{3}{2}$. Hence, the zeros of $\mathcal{L}(ks-k,\chi^k)$ that are canceled by the zeros of $\mathcal{L}(s-1,\chi)$ are negligible. Any zero of the denominator $\mathcal{L}(ks-k,\chi^k)$ which is not canceled by the zeros of numerator $\mathcal{L}(s-1,\chi)$ is a pole of $g(s)$, and this contradicts the assumption on the existence of a positive constant $x_0$ such that $\mathcal{A}(x)$ does not change the sign for $x>x_0$, through Proposition \ref{Landau}. This completes the proof of Theorem \ref{thm6}.

 \section{Function fields}
 \subsection{Proof of Theorem \ref{thm7}}
 We begin with the sum
 \begin{align*}
    \mathfrak{S}_{k}(N; \mathfrak{m},\mathfrak{g})&=\sum_{\substack{f\in\mathbb{F}_q[X]\\\deg(f)\leq N\\f\equiv\mathfrak{g}\pmod{\mathfrak{m}}}}\mathfrak{F}_{k}(f)
     =\frac{1}{\Phi(\mathfrak{m})}\sum_{\chi\pmod{\mathfrak{m}}}\bar{\chi}(\mathfrak{g})\sum_{\substack{f\in\mathbb{F}_q[X]\\\deg(f)\leq N}}\chi(f)\mathfrak{F}_{k}(f).\numberthis\label{N8}
 \end{align*}
In the last step, we used the orthogonality relation of Dirichlet characters modulo $\mathfrak{m}$ over the polynomial ring $\mathbb{F}_q[x]$. Since the arithmetic function $\chi(f)\mathfrak{F}_{k}(f)$ is multiplicative, its Dirichlet series has an Euler product representation and is given by
 \begin{align*}
     F(s)&=\sum_{\substack{f\in\mathbb{F}_q[X]\\ f\  \text{monic}}}\frac{\chi(f)\mathfrak{F}_{k}(f)}{|f|^s}
    =\frac{L_q(s-1,\chi)}{L_q(ks-k,\chi^k)}\mathcal{M}_k(s,\chi),\numberthis\label{37}
 \end{align*}
 where $\mathcal{M}_k(s,\chi)=\prod_{\substack{P\in\mathbb{F}_q[X]\\ \text{monic, irreducible}}}(1+\mathscr{A}_{P, \chi}(s))$ and
\[\mathscr{A}_{P, \chi}(s)=\frac{\frac{1}{|P|^{ks-\frac{1}{k}}}\sum_{m=0}^{\infty}\frac{\chi(P^{k+m})}{|P|^{ms+\frac{1}{k}-\frac{1}{k+m}}}}{\left(1+\frac{\chi(P)}{|P|^{s-1}}+\frac{\chi(P^2)}{|P|^{2s-2}}+\cdots+\frac{\chi(P^{k-1})}{|P|^{(k-1)(s-1)}} \right)} .\]
Note that $\mathcal{M}_k(s,\chi)$ is absolutely convergent and defines an analytic function on $\Re(s)>1$. Also, the Dirichlet $L$-function $L_q(s,\chi)$ modulo $\mathfrak{m}$ for polynomial ring has meromorphic continuation to the complex plane, with a simple pole at $s=1$ only for the principal character $\chi=\chi_0$. It is well known that the Riemann hypothesis is true for function fields, thus the denominator $L_q(ks-k,\chi^k)$ has no zero in the half-plane $\Re(s)> 1+\frac{1}{2k}$. Therefore, the Dirichlet series $F(s)$ has an analytic continuation to $\Re(s)>1+\frac{1}{2k}$, except for a simple pole at $s=2$ associated with the principal character $\chi=\chi_0$. Employing Lemma\ref{prop7} with $a(f)=\chi(f)\mathfrak{F}_{k}(f)$ and $\alpha=2+\frac{1}{ N}$ gives
\begin{align*}
    \sum_{\substack{f\in\mathbb{F}_q[T]\\\deg(f)\leq N}}\chi(f)\mathfrak{F}_{k}(f)&=\frac{1}{2\pi i}\int_{\alpha-iT}^{\alpha+iT}F(s)\frac{q^{Ns}}{s}ds+\BigO{\frac{q^{\alpha N}}{T}\sum_{f}\frac{\mathfrak{F}_{k}(f)}{|f|^{\alpha}|\log(q^N/|f|)|}}\\
    &=\frac{1}{2\pi i}\int_{\alpha-iT}^{\alpha+iT}F(s)\frac{q^{Ns}}{s}ds+\BigO{\frac{q^{2N}\log N}{T}}.\numberthis\label{N21}
\end{align*}
To estimate the line integral in the above identity, we deform the line of integration into a rectangular contour with vertices $1+1/2k+\epsilon\pm iT$ and $\alpha\pm iT$.

Case I: For $\chi=\chi_0$. By Cauchy's residue theorem, we have
\[\frac{1}{2\pi i}\int_{\alpha-iT}^{\alpha+iT}F(s)\frac{q^{Ns}}{s}ds=\frac{q^{2N}\mathcal{M}_k(s,\chi_0)}{2\log qL_q(k,\chi_0)}\prod_{P|\mathfrak{m}}\left(1-\frac{1}{|P|} \right)+\sum_{j=1}^3I_j, \numberthis\label{N20}\]
where $I_1, I_2$, and $I_3$ are integrals along the lines $[\alpha-iT, 1+1/2k+\epsilon-iT]$, $[1+1/2k+\epsilon-iT, 1+1/2k+\epsilon+iT]$, and $[1+1/2k+\epsilon+iT, \alpha+iT]$, respectively. We next estimate the integrals on the right-hand side of the above identity. Using \eqref{zetaq}, we obtain
\[I_1, I_3\ll_{\mathfrak{m},k} \frac{1}{T}\left(\int_{1+\frac{1}{2k}+\epsilon}^2q^{N\sigma}T^{\epsilon}d\sigma +\int_{2}^{\alpha}q^{N\sigma}d\sigma\right)\ll_{\mathfrak{m}, k} \frac{q^{2N}}{T^{1-\epsilon}}\ \text{and}\ I_2\ll_{\mathfrak{m}, k} q^{N(1+\frac{1}{2k})+\epsilon}\int_{0}^{T}\frac{1}{t^{1-\epsilon}}dt\ll_{\mathfrak{m}, k} q^{N(1+\frac{1}{2k})+\epsilon} T^{\epsilon}. \]
Case II: For $\chi\ne\chi_0$, using Cauchy's residue theorem, we have
\[\frac{1}{2\pi i}\int_{\alpha-iT}^{\alpha+iT}F(s)\frac{q^{Ns}}{s}ds=\sum_{j=1}^3I_j, \numberthis\label{N22}\]
where $I_j$ are same as in Case I. The above integrals are estimated in a similar way to those for principal characters. Therefore, by \eqref{L_k}, we have
\[I_1, I_3\ll_{\mathfrak{m}, k} \frac{q^{2N}}{T^{1-\epsilon}}\ \text{and}\ I_2\ll_{\mathfrak{m}, k} q^{N(1+\frac{1}{2k})+\epsilon}T^{\epsilon}.  \]
Inserting the above estimates from \eqref{N20} and \eqref{N22} into \eqref{N21} and setting $T=q^{N-\frac{N}{2k}-\epsilon}$, we obtain the required result.

\subsection{Proof of Theorem \ref{thm9}}
The key idea to obtain the omega result is the Proposition \ref{omega}. We denote
\[\mathscr{A}(N)=-\mathfrak{S}_{k}(N; \mathfrak{m},\mathfrak{g})+\frac{q^{2N}\mathcal{M}_k(s,\chi_0)}{2\Phi(\mathfrak{m})\log qL_q(k,\chi_0)}\prod_{P|\mathfrak{m}}\left(1-\frac{1}{|P|} \right)+q^{N(1+\frac{1}{2k}-\epsilon)}. \]
We begin with the integral
\begin{align*}
    \int_1^{\infty}\frac{\mathscr{A}(N)}{q^{N(s+1)}}dq^N
    &=\int_1^{\infty}\left(-\frac{1}{\Phi(\mathfrak{m})}\sum_{\chi\pmod{\mathfrak{m}}}\bar{\chi}(\mathfrak{g})\sum_{\substack{f\in\mathbb{F}_q[X]\\\deg(f)\leq N}}\chi(f)\mathfrak{F}_{k}(f) \right)q^{-N(s+1)}dq^N\\&+ \frac{\mathcal{M}_k(s,\chi_0)}{2(s-2)\Phi(\mathfrak{m})\log qL_q(k,\chi_0)}\prod_{P|\mathfrak{m}}\left(1-\frac{1}{|P|} \right)+\frac{1}{s-1-\frac{1}{2k}+\epsilon}\\
    &=\frac{-1}{s\Phi(\mathfrak{m})}\sum_{\substack{\chi\pmod{\mathfrak{m}}\\\chi\ne\chi_0}}\frac{\bar{\chi}(\mathfrak{g})L_q(s-1,\chi)}{L_q(ks-k,\chi^k)}\mathcal{M}_k(s,\chi)-\frac{L_q(s-1,\chi_0)}{s\Phi(\mathfrak{m})L_q(ks-k,\chi_0)}\mathcal{M}_k(s,\chi_0)\\&+\frac{\mathcal{M}_k(s,\chi_0)}{2(s-2)\Phi(\mathfrak{m})\log qL_q(k,\chi_0)}\prod_{P|\mathfrak{m}}\left(1-\frac{1}{|P|} \right)+\frac{1}{s-1-\frac{1}{2k}+\epsilon}.\numberthis\label{F1}
\end{align*}
We observe that the second term on the right-hand side of the above identity has a simple pole at $s=2$, which is canceled by the pole of the third term at $s=2$. Therefore, the right hand side of the integral in \eqref{F1} is analytic in the half-plane $\Re(s)> 1+\frac{1}{2k}$. It is well known that the Riemann hypothesis holds in function fields. Assuming $L_q(\frac{1}{2},\chi)\ne 0$, the integral in \eqref{F1} can be analytically continued to the real segment $(1+\frac{1}{2k}-\epsilon,1+\frac{1}{2k}]$, with a simple pole at $s=1+\frac{1}{2k}-\epsilon$. This, in turn, implies that $\int_1^{\infty}\mathscr{A}(N)q^{-N(\sigma+1)}dq^N<\infty$ for $\sigma>1+\frac{1}{2k}-\epsilon$. Suppose that
\[\mathfrak{S}_{k}(N; \mathfrak{m},\mathfrak{g})-\frac{q^{2N}\mathcal{M}_k(s,\chi_0)}{2\Phi(\mathfrak{m})\log qL_q(k,\chi_0)}\prod_{P|\mathfrak{m}}\left(1-\frac{1}{|P|} \right)<q^{N(1+\frac{1}{2k}-\epsilon)}\ \text{for all}\ N>N_0. \numberthis\label{F2}\]
We apply Proposition \ref{omega} for $\mathscr{A}(N)$ which in turn yields that the integral $\int_1^{\infty}\mathscr{A}(N)q^{-N(s+1)}dq^N$ is analytic in the half plane $\Re(s)>1+\frac{1}{2k}-\epsilon$. Since the integral has pole at $s=1+\frac{1}{2k}$, which arises from the zeros of $L_q(ks-k,\chi^k)$. This is a contradiction and one may deduce that the assertion in \eqref{F2} is not true. Therefore,
\[\mathfrak{S}_{k}(N; \mathfrak{m},\mathfrak{g})-\frac{q^{2N}\mathcal{M}_k(s,\chi_0)}{2\Phi(\mathfrak{m})\log qL_q(k,\chi_0)}\prod_{P|\mathfrak{m}}\left(1-\frac{1}{|P|} \right)=\Omega_+(q^{N(1+\frac{1}{2k}-\epsilon)}). \]
Similarly, we obtain the corresponding $\Omega_-$ bound
\[\mathfrak{S}_{k}(N; \mathfrak{m},\mathfrak{g})-\frac{q^{2N}\mathcal{M}_k(s,\chi_0)}{2\Phi(\mathfrak{m})\log qL_q(k,\chi_0)}\prod_{P|\mathfrak{m}}\left(1-\frac{1}{|P|} \right)=\Omega_-(q^{N(1+\frac{1}{2k}-\epsilon)}). \]
This completes the proof of Theorem \ref{thm9}.

\subsection{Proof of Theorem \ref{thm8}}
We apply the techniques developed by Knapowski and Turan \cite{Knapowski} to investigate the sign changes of $\mathfrak{S}_{k}(N; \mathfrak{m},\mathfrak{g}_1)-\mathfrak{S}_{k}(N; \mathfrak{m},\mathfrak{g}_2)$, and, using \eqref{N8}, the difference can be expressed in the following form:
\[ \mathfrak{S}_{k}(N; \mathfrak{m},\mathfrak{g}_1)-\mathfrak{S}_{k}(N; \mathfrak{m},\mathfrak{g}_2)=\frac{1}{\Phi(\mathfrak{m})}\sum_{\chi\pmod{\mathfrak{m}}}(\bar{\chi}(\mathfrak{g}_1)-\bar{\chi}(\mathfrak{g}_2))\sum_{\substack{f\in\mathbb{F}_q[X]\\\deg(f)\leq N}}\chi(f)\mathfrak{F}_{k}(f). \]
We begin with the integral
\begin{align*}
   \mathscr{I}(s)&= \int_1^{\infty}(\mathfrak{S}_{k}(N; \mathfrak{m},\mathfrak{g}_1)-\mathfrak{S}_{k}(N; \mathfrak{m},\mathfrak{g}_2)\pm c(q^N)^{1+\frac{1}{2k}-\epsilon})(q^N)^{-s-1}dq^N\\
   &=\frac{1}{\Phi(\mathfrak{m})}\sum_{\chi\pmod{m}}(\bar{\chi}(\mathfrak{g}_1)-\bar{\chi}(\mathfrak{g}_2))\int_1^{\infty}\frac{1}{q^{N(s+1)}}{\sum_{\substack{f\in\mathbb{F}_q[X]\\\deg(f)\leq N}}\chi(f)\mathfrak{F}_k(f)}dq^N\pm \frac{c}{s-1-\frac{1}{2k}+\epsilon}.\numberthis\label{N9}
\end{align*}
Since the integral on the right-hand side of \eqref{N9} represents the Dirichlet series $\sum_{\substack{f\in\mathbb{F}_q[X]\\ \text{monic}}}\frac{\chi(f)\mathfrak{F}_{k}(f)}{|f|^s}$ as a Mellin transform. Thus
\begin{align*}
 \int_1^{\infty}\frac{1}{q^{N(s+1)}}{\sum_{\substack{f\in\mathbb{F}_q[X]\\\deg(f)\leq N}}\chi(f)\mathfrak{F}_{k}(f)}dq^N &= \frac{1}{s}\sum_{\substack{f\in\mathbb{F}_q[X]\\ \text{monic}}}\frac{\chi(f)\mathfrak{F}_{k}(f)}{|f|^s}
  =\frac{L_q(s-1,\chi)}{L_q(ks-k,\chi^k)}\mathcal{M}_k(s,\chi).
 \end{align*} 
In the last step, we used \eqref{37}. Plugging the above identity into \eqref{N9} yields
\begin{align*}
\mathscr{I}(s)&=\frac{1}{s\Phi(\mathfrak{m})}\sum_{\chi\pmod{\mathfrak{m}}}(\bar{\chi}(\mathfrak{g}_1)-\bar{\chi}(\mathfrak{g}_2))\frac{L_q(s-1,\chi)}{L_q(ks-k,\chi^k)}\mathcal{M}_k(s,\chi)\pm \frac{c}{s-1-\frac{1}{2k}+\epsilon} \\
&=\frac{1}{s\Phi(\mathfrak{m})}\sum_{\substack{\chi\pmod{m}\\\chi\ne\chi_0}}(\bar{\chi}(\mathfrak{g}_1)-\bar{\chi}(\mathfrak{g}_2))\frac{L_q(s-1,\chi)}{L_q(ks-k,\chi^k)}\mathcal{M}_k(s,\chi)\pm \frac{c}{s-1-\frac{1}{2k}+\epsilon}.
\end{align*}
We apply Proposition \ref{Landau} with $\mathbb{A}(N)=\mathfrak{S}_{k}(N; \mathfrak{m},\mathfrak{g}_1)-\mathfrak{S}_{k}(N; \mathfrak{m},\mathfrak{g}_2)\pm cq^{N(1+\frac{1}{2k}-\epsilon)}$. Thus
\begin{align*}
    \mathscr{I}(s)=\int_1^{\infty}\mathbb{A}(N)q^{-N(s+1)}dq^N
    =\frac{1}{\Phi(\mathfrak{m})}\sum_{\substack{\chi\pmod{\mathfrak{m}}\\\chi\ne\chi_0}}(\bar{\chi}(g_1)-\bar{\chi}(g_2))\frac{L_q(s-1,\chi)\mathcal{M}_k(s,\chi)}{L_q(ks-k,\chi^k)}\pm \frac{c}{s-1-\frac{1}{2k}+\epsilon}.
\end{align*}
Following the argument in the proof of Theorem \ref{thm7}, we note that the integral $\mathscr{I}(s)$ in the above identity is analytic in the half-plane $\Re(s)> 1+\frac{1}{2k}$.
Assuming that $L_q(1/2,\chi)\ne 0$, the function
 $1/L_q(ks-k,\chi^k)$ defines an holomorphic function on the real segment $1+\frac{1}{2k}-\epsilon<\sigma\leq 1+\frac{1}{2k}$, consequently $\mathscr{I}(s)$ has analytic continuation on the real segment $1+\frac{1}{2k}-\epsilon<\sigma\leq 1+\frac{1}{2k}$.


 Suppose there exists a positive constant $N_0$ such that for $N>N_0$, $\mathbb{A}(N)$ preserves its sign. By Proposition \ref{Landau}, $\mathscr{I}(s)$ is analytic in the half-plane $\Re(s)>1+\frac{1}{2k}-\epsilon$. Therefore, each zero of $L_q(ks-k,\chi^k)$ is canceled by the zeros of $L_q(s-1,\chi)$. However, it is well known that all zeros of $L_q(ks-k,\chi^k)$ lie on the line $\Re(s)=1+\frac{1}{2k}$, while all zeros of $L_q(s-1,\chi)$ are on the line $\sigma=\frac{3}{2}$. Hence, the zeros of the denominator $L_q(ks-k,\chi^k)$ cannot be canceled by the zeros of the numerator $L_q(s-1,\chi)$. Any such zero of the denominator $L_q(ks-k,\chi^k)$ is a pole of $\mathscr{I}(s)$, which, by Proposition \ref{Landau}, contradicts the assumption that $\mathbb{A}(N)$ does not change the sign for $N>N_0$. This completes the proof of Theorem \ref{thm8}.

\bibliographystyle{plain}
\bibliography{reference}
\end{document}